\def\alp{\alpha}
\def\dd{\displaystyle}
\def\gam{\gamma}
\def\Gam{\Gamma}
\def\mtline#1{\hbox to#1{\hrulefill}}
\def\noi{\noindent}
\def\Ome{\Omega}
\def\te{\text}
\def\what{\widehat}
\def\wtit{\widetilde}
\def\cA{{\mathcal A}}
\def\cG{{\mathcal G}}
\def\cH{{\mathcal H}}
\def\cU{{\mathcal U}}
\def\maA{{\mathcal A}}
\def\maB{{\mathcal B}}
\def\maF{{\mathcal F}}
\def\maG{{\mathcal G}}
\def\maH{{\mathcal H}}
\def\maL{{\mathcal L}}
\def\maM{{\mathcal M}}
\def\maS{{\mathcal S}}
\def\C{{\mathbb C}}
\def\D{{\mathbb D}}
\def\H{{\mathbb H}}
\def\K{{\mathbb K}}
\def\N{{\mathbb N}}
\def\Q{{\mathbb Q}}
\def\R{{\mathbb R}}
\def\S{{\mathbb S}}
\def\T{{\mathbb T}}
\def\Z{{\mathbb Z}}
\definecolor{purple}{cmyk}{.33,1,0,.4}
\definecolor{m}{rgb}{1,0.1,1}
\definecolor{green}{cmyk}{1,0,1,0}
\definecolor{test}{rgb}{1,1,1}
\definecolor{cmyk}{cmyk}{0,1,1,0}
\newtheorem{Equation}{}[section]
\newtheorem{corollary}[Equation]{Corollary}
\newtheorem{definition}[Equation]{Definition}
\newtheorem{example}[Equation]{Example}
\newtheorem{lemma}[Equation]{Lemma}
\newtheorem{proposition}[Equation]{Proposition}
\newtheorem{remark}[Equation]{Remark}
\newtheorem{theorem}[Equation]{Theorem}
\newtheorem{prop}[Equation]{Proposition}
\def\ch{\operatorname{ch}}
\def\codim{\operatorname{codim}}
\def\Coker{\operatorname{Coker}}
\def\coker{\operatorname{coker}}
\def\Diff{\operatorname{Diff}}
\def\dim{\operatorname{dim}}
\def\Dir{\operatorname{\not{\hspace{-0.08cm}}\pa}}
\def\Res{\operatorname{Res}}
\def\End{\operatorname{End}}
\def\oH{\operatorname{H}}
\def\Id{\operatorname{I}}
\def\id{\operatorname{id}}
\def\ind{\operatorname{ind}}
\def\Sat{\operatorname{Sat}}
\def\Ind{\operatorname{Ind}}
\def\Iso{\operatorname{Iso}}
\def\oK{\operatorname{K}}
\def\ker{\operatorname{ker}}
\def\Ker{\operatorname{Ker}}
\def\sign{\operatorname{sign}}
\def\Sign{\operatorname{Sign}}
\def\Spin{\operatorname{Spin}}
\def\Td{\operatorname{Td}}
\def\Tr{\operatorname{Tr}}
\def\tr{\operatorname{tr}}
\def\vol{\operatorname{vol}}
\def\cA{{\mathcal A}}
\def\maA{{\mathcal A}}
\def\alp{\alpha}
\def\gam{\gamma}
\def\Gam{\Gamma}
\def\ep{\epsilon}
\def\Lam{\Lambda}
\def\Ome{\Omega}
\def\dd{\displaystyle}
\def\pa{\partial}
\def\ms{\medskip}
\def\ssm{\smallsetminus}
\def\what{\widehat}
\def\wtit{\widetilde}
\def\te{\wtit e}
\begin{document}



\title[Higher Lefschetz theorem \today]
{The higher fixed point theorem for foliations. \\Applications to rigidity and integrality\
\\ \today\\
}


\author{Moulay Tahar Benameur}
\address{Institut Montpellierain Alexander Grothendieck, UMR 5149 du CNRS, Universit\'e de Montpellier}
\email{moulay.benameur@umontpellier.fr}

\author[James L. Heitsch \today]{James L.  Heitsch}
\address{Mathematics, Statistics, and Computer Science, University of Illinois at Chicago} 
\email{heitsch@uic.edu}

    \dedicatory{Dedicated to Fedor Sukochev on the occasion
    of his sixtieth birthday}

\thanks{Mathematical subject classification (1991). 19L47, 19M05, 19K56.\\
Key words: $C^*$-algebras, K-theory, Lefschetz, foliations.}

\begin{abstract} 

We give applications of the higher Lefschetz theorems for foliations of \cite{BH10}, primarily involving Haefliger cohomology.  These results show that the transverse structures of foliations carry important topological and geometric information.  This is in the spirit of the passage from the Atiyah-Singer index theorem for a single compact manifold to their families index theorem, involving a compact fiber bundle over a compact base.  For foliations, Haefliger cohomology plays the role that the cohomology of the base space plays in the families index theorem.  

We obtain highly useful numerical invariants by paring with closed holonomy invariant currents.  In particular, we prove that the non-triviality of the {{higher}} $\what{A}$ genus of the foliation in Haefliger cohomology can be an obstruction to the existence {of non-trivial  leaf-preserving compact connected group actions}.   We then construct a large collection of examples for which no such actions exist.  Finally, we relate our results to Connes' spectral triples, and prove useful integrality results.

\end{abstract}

\maketitle
\tableofcontents

\ms

\section{Introduction} 

In this paper we continue our program of investigating invariants for foliations which come from the fact that their transverse structures carry important topological and geometric information.  These invariants arise from the extension of (generally classical cohomological) invariants to invariants which involve the Haefliger cohomology of the foliation.  This is in the spirit of the passage from the Atiyah-Singer index theorem for a single compact manifold to their families index theorem, involving a compact fiber bundle over a compact base.  For foliations, Haefliger cohomology plays the role that the cohomology of the base space plays in the families index theorem.  

\ms

This paper is devoted to applications of the higher Lefschetz theorems for foliations of \cite{BH10}.   Let $F$ be a foliation of the closed  Riemannian manifold $(V, g)$, and $h$ a leaf-preserving diffeomorphism of $V$ which generates a compact Lie group $H$ of isometries of $(V, g)$. We assume that the fixed point submanifold $V^h=V^H$  of $h$ is  transverse to the foliation and denote by $TF^h = T(F \cap V^h)$ the induced integrable subbundle of $TV^h$. So $(V^h, F^h)$ is a new closed foliated manifold.   Given an $H$-equivariant leafwise elliptic pseudodifferential complex $(E,d)$ on $(V, F)$, the $H$-equivariant (analytic)  index of $(E, d)$ is a  class $\Ind^H(E, d)$ in the $H$-equivariant $K$-theory group $K^H(C^*(V, F))$ of the Connes $C^*$-algebra $C^*(V, F)$ \cite{C79}.  Note that the group $K^H(C^*(V, F))$ is a module over the representation ring $R(H)$ of $H$.  The Lefschetz class $L(h; E, d)$ of $h$ with respect to $(E, d)$ was introduced  in \cite{B97} as the localiazation of the class $\Ind^H(E, d)$ with respect to the prime ideal associated with $h$. So, $L(h; E, d)$ belongs to the localized $R(H)_h$-module $K^H(C^*(V, F))_h$, where the subscript $_h$ means localization of $R(H)$-modules with respect to the ideal associated with $h$ in $R(H)$.  The main theorem of \cite{B97} was the expression of $L(h;E,d)$ in terms of topological data over the fixed point foliation $(V^h, F^h)$. An easy corollary is that if there exists such $(E, d)$ with $L(h; E,d)\neq 0$ then $V^h\neq \emptyset$. More precisely, the $K$-theory  Lefschetz theorem can be stated as follows.

\ms

{\bf{Theorem \ref{K-Lef} \cite{B97}}}
{\em 
Under the above notations and denoting by $i:TF^h\hookrightarrow TF$  the inclusion map, the following fixed-point formula holds in the localized $R(H)_h$-module $K^H(C^*(V, F))_h$:
$$
L(h; E, d) = \left(\Ind _{(V^h, F^h)}\otimes \id_{R(H)_h}\right) \left( \frac{i^*[\sigma (E, d)]}{\lambda_{-1} (N^h\otimes \C)}\right), 
$$
where $\Ind_{(V^h, F^h)} : K (TF^h) \to K (C^*(V^h, F^h))$ denotes the topological Connes-Skandalis  index morphism  for the foliated manifold $(V^h, F^h)$  \cite{CS84}. 
}
\ms

Note that $i^*[\sigma (E, d)]$ is the restriction of the symbol class $[\sigma (E, d)]\in K_H(TF)$ to $TF^h$, and the fraction  $\frac{i^*[\sigma (E, d)]}{\lambda_{-1} (N^h\otimes \C)}$ is to be understood in the localized module $K_H(TF^h)_h\simeq K(TF^h)\otimes R(H)_h$. So, while the RHS of the $K$-theory Lefschetz formula belongs to $K(C^*(V^h, F^h))\otimes R(H)_h$, it is viewed in $K^H(C^*(V,F))_h$ via a standard Mortia extension morphism associated with the transverse submanifold $V^h$. 

\ms

In order to extract scalar Lefschetz fixed point formulae from Theorem \ref{K-Lef}, we were naturally led  in  \cite{BH10} to use equivariant cyclic cohomology. Indeed, any $H$-equivariant cyclic cocycle over the convolution algebra $C_c^\infty (\maG)$ of the holonomy groupoid $\maG$ induces an $R(H)$-equivariant pairing with $K^H(C_c^\infty (\maG))$ with values in the continuous central functions on $H$. So, whenever this pairing extends to the equivariant $K$-theory of the completion algebra $C^*(V, F)$, one gets well defined higher Lefschetz numbers by localizing this pairing at $h$, and also by evaluating the resulting central function at the same $h$. In this paper, we have used this method to investigate the case of the equivariant cyclic cocycle associated with a  transversely elliptic  Dirac-type operator. More precisely, if we assume for simplicity that the foliation is Riemannian and transversely spin, then according to \cite{Kordyukov}, the transverse spin-Dirac operator with coefficients in any basic hermitian bundle provides an appropriate spectral triple which can be restricted to the fixed point foliation $(V^h, F^h)$. 
Then we obtain  the following integrality theorem. 

\ms

{\bf{Theorem \ref{LefInt} }}
{\em 
Denote by $\phi^{CM}$ the even Connes-Moscovici residue cocycle in the $(b, B)$-bicomplex  associated with a transversely elliptic Dirac-type operator on $(V, F)$  \cite{CM95}. Let $\Ind^{CS}_{V^h, F^h}: K(TF^h) \rightarrow K (C^*(V^h, F^h))$ be the Connes-Skandalis topological longitudinal index morphism for the foliation $(V^h, F^h)$ \cite{CS84}. Then we have 
$$
\left\langle (\Ind^{CS}_{V^h, F^h}\otimes \C)\left(\frac{i^*[\sigma (E,d)] (h)}{\lambda_{-1} (N^h\otimes \C) (h)}\right) \, , \, [\phi^{CM}]\vert_{V^h, F^h}\otimes \id_\C\right\rangle\; \in \; R(H)(h). 
$$
}

\ms

Recall that $\phi^{CM}$ is represented by a finite list of residues of zeta functions and the above result says that some rational combination of such residues belongs to the integral subgroup $R(H)(h)$.These residues are closely related to Dixmier traces and more generally to some singular traces on algebras of pseudodifferential operators.  They are in the spirit of the more general singular traces studied by Sukochev and his collaborators in the semi-finite setting, see for instance \cite{LMSZ23, SZ23}. See also \cite{BF06} where examples of (semi-finite) Dixmier traces on foliations are also given.  Integrality results are highly important since they can lead to new invariants of great significance.  As examples, the results of Chern-Simons and Cheeger-Simons led to differential characters and the Simons characters, while the Bott vanishing theorem led to the secondary characteristic classes for foliations.  
If for instance $h$ is a leaf-preserving involution, then we obtain an integer.  If more generally $h$ has order $p\geq 2$ then we obtain an element of $\Z[e^{2i\pi/p}]$. This is in the spirit of the classical Atiyah-Segal theorem and its corollaries, especially those related with number theory and described in the nice monography \cite{HirzebruchZagier}. 
Notice that if the foliation is only a transversely oriented Riemannian foliation, then one may as well use the transverse signature operator as defined in \cite{Kordyukov} and get a similar integrality statement. For non-riemanian foliations though, the computations become highly non-trivial since one needs to use the transverse hypo-elliptic signature operator on the Connes fibration associated with $(V, F)$, along with the more complicated dimension spectrum as well as more complicated residues. So although the result can be stated as above, it can hardly be exploited in practice, so this general case will be addressed elsewhere.

\ms

A different, but closely related approach to the above cyclic pairing method was investigated in \cite{BH10}. In particular, we proved in \cite{BH10} that the above pairing method  works perfectly well for all foliations, when generating the equivariant cyclic cocycles from the Haefliger homology of the foliation. We then proved that the $K$-theory Lefschetz formula can always be paired with any given  closed holonomy invariant current $C$ to provide a rich collection of topological fixed point  formulae, valid  for all foliations, and given by
$$ 
L_C(h; E,d)=\left< \left[C|_{V^h}\right],  \int_{F^h} \frac{\ch_{\C}(i^{*}[\sigma(E,d)](h))}{\ch_{\C}(\lambda_{-1}(N^{h}
\otimes{\C})(h))} \, \Td(TF^{h} \otimes \C) \right >.
$$
Here  $\Td$ is the Todd genus, $\ch_{\C}$ is the complexified Chern character, and $\dd \int_{F^h}$ is the ``integral over the fiber" from the cohomology of $V^h$ to the Haefliger cohomology of $(V^h,F^h)$.  See Section 2 for the specifics.  When the current $C$ corresponds to a holonomy invariant measure one obtains the measured Lefschetz theorem proven in \cite{HL90}.  Even if such a measure doesn't exist, one can still apply this formula with other interesting holonomy invariant currents. In the case of simple foliations, this higher formula reduces to the fiberwise Lefschetz formula proven in \cite{Bena02}, while in the case of foliated flat bundles, it reduces to the formula proven in \cite{Bena03}.

\ms

Returning to the Riemannian case, the pairings with the Kordyukov equivariant  spectral triples described above  turn out to be closely related with the equivariant pairings with Haefliger homology. More precisely, by applying the machinery of  the Getzler rescaling method \cite{Getzler} to Riemannian foliations and transversely elliptic Dirac-type operators, in \cite{BH18, BH19} we succeeded  in representing the class in the $(b,B)$-bicomplex of the Connes-Moscovici residue cocycle by the expected characteristic basic form. Combining these results with our theorems here, we  deduce the following topological integrality result. 

\noi
{\bf{Theorem \ref{integrality} }}
{\em Under the above assumptions,  the  characteristic number 
$$
\int_{V^h} \dfrac{\ch_\C (i^*[\sigma (E,d)](h))}{\ch_\C (\lambda_{-1}(N^h\otimes \C)(h))} \Td (TF^h\otimes \C) \what{A} (\nu^h) \ch ({\what{E}} \,|\,_{V^h}) \; \text{ belongs to }R(H)(h).
$$
}

Note that this formula  is not expected to remain valid for non-Riemannian foliations, while as explained above, Theorem \ref{LefInt} can certainly be stated for more general foliations.

\ms

We also explore other  important consequences of the topological Haefliger Lefschetz formulae.
In Section \ref{LefEGs} for instance, we consider the four classical geometric operators and show that our results give non trivial extensions of results in \cite{HL90}.  Also, depending on the context, $L_C(h; E,d)$ gives highly useful numerical invariants.  A prime example of this is Theorem \ref{rigidity} below,  the higher foliation rigidity theorem.   Let $\what{A}(F)$ be the $\what{A}$-characteristic class of the tangent bundle of the foliation $TF$.  Our higher topological Lefschetz theorem allows us to generalize Proposition 3.2 of \cite{HL91} by taking into account all closed holonomy invariant currents.  Note that $\what{A}(TF) \in \oH^*(V,\R)$, and that $\dd\int_F$ maps $\oH^*(V,\R)$ to the Haefliger cohomology of $F$.

\ms

\noi
{\bf{Theorem \ref{rigidity} }}
{\em {Suppose that $TF$ is even dimensional and spin, and that there exists a closed holonomy invariant current $C$ such that 
$$
\left<[C], \int_F \what{A}(TF) \right> \neq 0.
$$
Then no compact connected Lie group can act non trivially as a group of isometries of $V$ preserving the leaves of F and their spin structure.}}

\ms
In subsection \ref{rigidguys}, we  construct a large collection of examples not already covered by \cite{HL91} for which Theorem \ref{rigidity} shows that no such actions exist.

\ms

\noindent
{\em Acknowledgements.}  It is a pleasure to thank {{our wives V\'eronique and Lynn for putting up with us.}}

MTB  wishes to thank the french National Research Agency for support via the project ANR-14-CE25-0012-01 (SINGSTAR).

JLH wishes to thank the Simons Foundation for a Mathematics and Physical Sciences-Collaboration Grant for Mathematicians, Award Number 632868.

\section{Review of the higher Lefschetz theorem}\label{Review}

In this section we review  the main results of  \cite{BH10} with a brief overview of the main theorem of \cite{B97}.  In particular,  $F$ is a smooth {{oriented}} dimension $p$ foliation of the smooth compact Riemannian manifold $(V,g)$ with the tangent bundle to $F$ denoted by $TF$ {{and normal bundle $\nu$.}}   $\cG$ is the holonomy groupoid of $F$, which consists of equivalence classes of leafwise paths, where two paths are identified if they start at the same point, end at the same point, and the holonomy germ along them is the same.  Composition of paths makes $\cG$ a groupoid, and its space of units $\cG^0$ consists of the classes of the constant paths, so  $\cG^0 \simeq V$.  Denote  
by $\cG_x$ the elements of $\cG$ which start at the point $x \in V$,  by $\cG^y$ those elements  which end at the point $y \in V$, and  by $\cG^y_x$ the intersection $ \cG_x \cap \cG^y$.  
We have the maps $s,r:\cG \to V$, where $s(\gamma) = x$, if $\gamma  \in \cG_x$, and $r(\gamma) = y$ of $\gamma \in \cG^y$.  The metric $g$ on $V$ induces a canonical metric on $\cG$, and so the splitting $T\cG = TF_s \oplus TF_r \oplus \nu_{\cG}$.  Note that  $r_*(\nu_{\cG,\gamma}) = \nu_{r (\gamma)}$, and $s_*(\nu_{\cG,\gamma}) = \nu_{s (\gamma)}$.
For details, see \cite {BH04}. 
The metric on $\cG$ gives metrics on the  submanifolds $\cG_x, \cG^y \subset \cG$.   So objects such as $L^{2}(\cG_{x})$ and $L^{2}(\cG^y)$ are well defined, and do not depend on the choice of metric since $V$ is compact.  
Note that $r:\cG_{x} \to L_x$ is the holonomy covering of $L_x$, the leaf of $F$ through $x$, and similarly, $s:\cG^y \to L_y$ is  the holonomy covering of $L_y$.  

\ms

The (reduced) Haefliger cohomology of $F$, \cite{Hae}, is given as follows.   Let  ${\cU}$ be a finite good cover of $M$ by foliation charts as defined in \cite{HL90}.  For each $U_i \in {\cU}$, let $T_i\subset U_i$ be a transversal and set $T=\bigcup\,T_i$.  We may assume that the closures of the $T_i$ are disjoint.  Let $\cH$ be the holonomy pseudogroup
induced by $F$ on $T$.   Give $\cA^k_c(T)$, the space of k-forms on $T$ with compact support,  the usual $C^\infty$ topology, and denote the exterior derivative by $d_T:\cA^k_c(T)\to  \cA^{k+1}_c(T)$.   Denote by  $\cA^k_c(M/F)$ quotient of $\cA^k_c(T)$ by the closure of the vector subspace generated by elements of the form $\alpha-h^*\alpha$ where $h\in \cH$ and $\alpha\in\cA^k_c(T)$ has support contained in the range of $h$.  The exterior derivative $d_T$ induces a continuous differential $d_H:\cA^k_c(M/F)\to \cA^{k+1}_c(M/F)$.  Note that $\cA^k_c(M/F)$ and $d_H$ are independent of the choice of cover $\cU$.  The associated cohomology theory is denoted $H^*_c(M/F)$ and is called the Haefliger cohomology of $F$.  

\ms 

A holonomy invariant $k$-current $C$ assigns a real number to any compactly supported differential $k$ form defined on any transversal, with the stipulation that $C(h_\gamma^*\alpha - \alpha)=0$.
Any such $C$ gives a continuous (for the smooth topology) linear form on $\cA_c^k(M/F)$, and such a form is called a Haefliger current.  

\ms

Denote by $\cA^{p+k}(M)$ the space of smooth $p+k$-forms on $M$.  As the bundle $TF$ is oriented, there is a continuous open surjective linear map, called integration over the leaves,
$
\dd\int_F :\cA^{p+k}(M)\longrightarrow \cA^k_c(M/F)
$
which commutes with the exterior derivatives $d_{M}$ and $d_{H}$, so it induces the map 
$$ 
\int_F :\oH^{p+k}(M;\R) \to \oH^k_c(M/F).
$$
This map is given by 
$
\dd \int_F \omega \,\, = \,\, \sum_i \int_{U_i} \phi_i \omega,
$
 where $\{\phi_i\}$ is  a partition of unity subordinate to the cover $\cU$, and  
$\dd \int_{U_i}$ is integration over the fibers of the projection $U_i \to T_i$.  

\ms

Let $H \subset \Iso (V,g)$ be a closed subgroup of the compact Lie group of isometries of $(V, g)$ which acts by  $F$-preserving  transformations.  The isometry of $(V,g)$ which corresponds to the action of an element $h\in H$ will also  be denoted by $h$, so such $h$ takes leaves of $F$ to leaves of $F$.  Connes' $C^*$-algebra of $(V,F)$  is as usual denoted $C^*(V,F)$, see  \cite{C82}.  It is easy to check that $C^*(V,F)$ is an $H$-algebra, i.e.\ the induced action of $H$ is strongly continuous for the $C^*$-norm.  In fact, one can prove as well that the smooth subalgebra $C_c^\infty (\maG)$, or more generally any of its variants $C_c^\infty (\maG, E)$ corresponding to coefficients in a given $H$-equivariant vector bundle $E$ over $V$, is  an $H$-algebra for the smooth compact-open topology. 

\ms

{{Denote by  $(E,d)$ a leafwise elliptic pseudodifferential complex }}on $(V, F)$ which is $H$-equivariant, i.e.\ $E=\oplus E^i$, $d = \{d_i\}$, the bundles $E^i$ are $H$-equivariant, while the operators $d^i$ are $H$-invariant leafwise  pseudodifferential operators acting between the sections of $E^i$ and $E^{i+1}$,  such that $d^{i+1}d^i=0$. Ellipticity of the leafwise complex $(E, d)$ means that the  corresponding pointwise complex of principal symbols is exact when restricted to  the leafwise cosphere bundle. Then the $H$-equivariant index class of $(E, d)$ is a well defined element of the equivariant $K$-theory group $K^H (C^*(V, F))$ and actually factors through the equivariant $K$-theory of the smooth algebra $C_c^\infty (\cG, E)$,  see \cite{BH04} and also \cite{BH10}. Since $K^H (C^*(V, F))$ is an $R(H)$-module, we can localize it with respect to {{any conjugacy class $[h]$ in $H$ and obtain the localized index class of $(E, d)$ at $[h]$. }}

\ms

We proved in \cite{BH10} that the $H$-equivariant $K$-theory group of the smooth algebra $C_c^\infty (\maG; E)$ pairs with its $H$-equivariant cyclic cohomology $H_\lambda (C_c^\infty (\maG; E), H)$ to produce central functions on $H$. More specifically, we proved the following result.

\begin{theorem}\cite{BH10}\label{Pairing}
For any compact group $H$ acting by $F$-preserving isometries of $(V, g)$, there exists  a well defined pairing 
$$
K^H(C_c^\infty (\maG; E))  \otimes H^{even}_\lambda (C_c^\infty (\maG; E), H) \longrightarrow C(H)^H,
$$
where $H^{even}_\lambda (C_c^\infty (\maG; E), H)$ is the even $H$-equivariant cyclic cohomology and $C(H)^H$ denotes the central continuous functions on $H$. 
\end{theorem}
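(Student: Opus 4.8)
The plan is to realize the pairing as an $H$-equivariant refinement of Connes' Chern–character pairing between cyclic cohomology and $K$-theory, and then to verify that the output is a well-defined central continuous function on $H$. Since the theorem concerns the smooth algebra $A := C_c^\infty(\maG; E)$ rather than its $C^*$-completion, every trace-type expression below involves only compactly supported smooth kernels on $\maG$ and is therefore a finite, absolutely convergent quantity; no analytic extension argument is needed at this stage.

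First I would reduce $K$-theory classes to equivariant idempotents. A class in $K^H(A)$ is represented by a formal difference of $H$-equivariant idempotents $e \in M_N(\widetilde{A})$, each equipped with a unitary representation $U \colon H \to \mathrm{GL}_N$ implementing the $H$-action, so that the algebra action and the conjugation action by $U_h$ agree. Given an even $H$-equivariant cyclic cocycle $\phi$ of degree $2m$ (extended to matrices by $\phi \# \tr$), I would define the pairing pointwise on $H$ by the Connes-type formula
$$
\langle [\phi], [e]\rangle(h) \;=\; c_m\,(\phi \# \tr)\big(e\,U_h,\, e,\, \ldots,\, e\big),
$$
up to the standard normalization constant $c_m$ and the usual lower-order correction terms for idempotents. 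The insertion of $U_h$ in the first slot is exactly the equivariant twist that records the group element and turns the scalar pairing into a function of $h$.

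Next I would establish well-definedness on cohomology and on $K$-theory. Vanishing on cyclic coboundaries (the image of $b$ and $B$ in the $(b,B)$-bicomplex) is checked by the usual telescoping computation, now carried out with $U_h$ present; the $H$-equivariance built into $H^{even}_\lambda(A, H)$ is precisely what forces the boundary terms to cancel for each fixed $h$. Independence of the idempotent representative follows from invariance under $e \mapsto e \oplus 0$ and under equivariant conjugation, together with homotopy invariance: the $t$-derivative of $\langle [\phi], [e_t]\rangle(h)$ is the pairing of a coboundary with $e_t$ and hence vanishes. Finally, centrality is naturality in disguise — replacing $h$ by $g h g^{-1}$ amounts to transporting all arguments by the automorphism of $A$ induced by $g \in H$, under which the invariant cocycle $\phi$ and the idempotent $e$ are carried to equivalent data, so the value is unchanged and the function lies in $C(H)^H$; continuity in $h$ follows from the strong continuity of the $H$-action in the smooth compact-open topology together with the continuity of $\phi$ as a functional.

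The genuinely new point, and the step I expect to be the main obstacle, is the interplay between the cyclic coboundary operators and the equivariant twist $U_h$: one must verify that the notion of $H$-equivariance encoded in $H^{even}_\lambda(A, H)$ is exactly strong enough to guarantee simultaneously that coboundaries pair to zero for every fixed $h$ and that the assembled function is conjugation-invariant. I expect the careful bookkeeping of the $B$-operator contributions, combined with the $U_h$ insertions, to be the delicate part of the argument.
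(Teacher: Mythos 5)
Your overall strategy --- a Connes-type idempotent formula with an equivariant twist, checked for vanishing on coboundaries, homotopy invariance, conjugation-invariance and continuity --- is the same route the paper (i.e.\ \cite{BH10}) takes; indeed the paper later displays exactly this kind of formula, $\langle [e],[\phi]\rangle(h)=\sum_n c_n\,(\phi_{2n}\sharp\Tr)\bigl((\psi(h)\otimes\rho(h))(e-1/2),e,\ldots,e\bigr)$. But two points in your write-up are genuinely problematic. First, you locate \emph{all} of the $h$-dependence in a unitary $U_h\in \GL_N$ inserted into the first slot, as if the twist lived in matrices over the algebra and the cocycle $\phi$ were scalar-valued. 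Under the hypotheses of the theorem, $H$ acts merely by $F$-preserving isometries: there is in general no multiplier of $C_c^\infty(\maG;E)$ implementing $h$ (the multiplier $\psi(h)$ used later in the paper exists only when $h$ acts by \emph{holonomy} diffeomorphisms, an extra hypothesis not present here). In the paper's equivariant cyclic cohomology the cochains are themselves $C(H)$-valued, with the equivariance $f(ha^0,\ldots,ha^n)(hgh^{-1})=f(a^0,\ldots,a^n)(g)$, so the pairing is defined by evaluating the cocycle at $h$ \emph{and} twisting only the finite-dimensional coefficient factor $\End(X)$ by $\rho(h)$; your $U_h$ supplies the second ingredient but your formula suppresses the first, and your justification that ``the algebra action and the conjugation action by $U_h$ agree'' cannot hold at the level of the algebra itself.

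Second --- and this is the decisive gap --- vanishing on coboundaries is not ``the usual telescoping computation, now carried out with $U_h$ present.'' The equivariant complex is built from the twisted cyclic permutation $\lambda_H(a^0,\ldots,a^n\,|\,h)=(h^{-1}(a^n),a^0,\ldots,a^{n-1}\,|\,h)$, and the operators $b_H$ and $B_H=A_H\circ B_0$ carry this twist (note the last term of $b_H$ is $(-1)^{n+1}f(h^{-1}(a^{n+1})a^0,a^1,\ldots,a^n\,|\,h)$). With the \emph{untwisted} operators, the telescoping in the presence of a group insertion simply fails: permuting the last entry past the twist produces $h^{-1}(a^n)$ rather than $a^n$, so the boundary terms do not cancel. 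Checking that the pairing kills $b_H$- and $B_H$-coboundaries, and that homotopy invariance of the idempotent survives the twist, is precisely the mechanism that makes the theorem true --- and you explicitly flag this as ``the step I expect to be the main obstacle'' and leave it undone. So your proposal correctly identifies the construction the paper uses, but it stops short of the one verification that constitutes the actual proof.
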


As we explain below, elements of $H^{even}_\lambda (C_c^\infty (\maG; E), H)$ are provided by $H$-invariant transversely elliptic operators as studied in \cite{CM95}, but also by any holonomy invariant closed current  when for instance $H$ is connected.
When $H$ is abelian, localization at a given element $h$ of $H$ is  well defined and the following definition was introduced by the first author in \cite{B97}.  

\begin{definition}\cite{B97}\
Let $h$ be an isometry of $(V, g)$ which preserves the leafwise tangent bundle $TF$, and denote by $H$ the compact (abelian) Lie group generated by $h$ in  $\Iso (V, g)$. Then the $K$-theory Lefschetz class of $h$ with respect to $(E, d)$ is the class
$$
L(h; E, d) := \Ind^H (E,d)_{h} := \frac{\Ind^H(E,d)}{1_{R(H)}}\; \in \; K^H(C^*(V, F))_h,
$$
obtained as the image of $\Ind^H(E, d)$ in the localized module $K^H(C^*(V, F))_h$ at the prime ideal in $R(H)$ associated with $h$. 
\end{definition}

\medskip

When the foliation is top-dimensional with  leaves given by the connected components of $V$, we recover the usual Lefschetz class as introduced and studied by Atiyah and Segal in \cite{AS2}. In general, the Lefschetz class can be related to topological data over the fixed point submanifold of $h$ with its potential induced foliation. More precisely, assume  that the fixed point submanifold $V^h$ is transverse to the foliation $F$ and denote by $F^h$ its induced foliation. Denote by $N^h$ the normal bundle to $V^h$ in $V$. It is then easy to check that the image   in the localized module $K_H(V^h)_h\simeq K(V^h)\otimes R(H)_h$ of the class 
$$
\lambda_{-1} (N^h\otimes \C) :=\sum_i (-1)^i [\Lambda^i (N^h\otimes \C)]\in K_H (V^h),
$$
is an invertible element for the obvious ring structure \cite{AS2}. 

\begin{theorem}\cite{B97}\label{K-Lef}\
Under the above notations and denoting by $i:TF^h\hookrightarrow TF$  the inclusion map, the following fixed-point formula holds in $K^H(C^*(V, F))_h$:
$$
L(h; E, d) = \left(\Ind _{(V^h, F^h)}\otimes \id_{R(H)_h}\right) \left( \frac{i^*[\sigma (E, d)]}{\lambda_{-1} (N^h\otimes \C)}\right), 
$$
where $\Ind_{(V^h, F^h)} : K (TF^h) \to K (C^*(V^h, F^h))$ denotes the Connes-Skandalis (topological) index morphism \cite{CS84} for the foliated manifold $(V^h, F^h)$. 
\end{theorem}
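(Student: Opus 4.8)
The plan is to follow the template of the Atiyah--Segal--Singer equivariant index theorem, transplanted to the foliation setting by replacing the target $R(H)$ of the ordinary $G$-index with the $R(H)$-module $K^H(C^*(V,F))$ and the ordinary $G$-index morphism with the Connes--Skandalis longitudinal index $\Ind_{(V^h,F^h)}$. Three ingredients must be assembled: equivariance of the Connes--Skandalis index theorem, the Atiyah--Segal localization principle on $R(H)$-modules, and the $K$-theory Thom isomorphism (wrong-way functoriality) for the normal bundle $N^h$. The factor $\lambda_{-1}(N^h\otimes\C)$ in the denominator should emerge precisely as the localized equivariant Euler class of $N^h$.

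First I would reduce the analytic side to the topological side. The $H$-equivariant analytic index $\Ind^H(E,d)\in K^H(C^*(V,F))$ factors through the equivariant $K$-theory of $C_c^\infty(\maG,E)$ and, by the equivariant version of the Connes--Skandalis theorem \cite{CS84, BH04}, agrees with the topological index applied to the symbol class $[\sigma(E,d)]\in K_H(TF)$. Concretely, one constructs the topological index through an $H$-equivariant auxiliary embedding and checks that the wrong-way maps, Thom isomorphisms and Bott maps involved are all $H$-equivariant, reducing the problem to a computation with the single morphism $\Ind_{(V,F)}:K_H(TF)\to K^H(C^*(V,F))$ and the class $[\sigma(E,d)]$. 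I would then record the relevant geometry: since $V^h$ is transverse to $F$, the inclusion $TF\hookrightarrow TV$ identifies the normal bundle $N^h$ of $V^h$ in $V$ with a \emph{leafwise} sub-bundle, and the metric splitting gives $TF|_{V^h}=TF^h\oplus N^h$, so that $(V^h,F^h)$ is a genuine foliated manifold and a tubular neighbourhood of $V^h$ carries the product structure of $F^h$ with the $N^h$-directions.

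Next I would localize at the prime ideal of $R(H)$ associated with $h$. Because the $H$-fixed set is exactly $V^h$, the isometry $h$ acts on the fibres of $N^h$ without the eigenvalue $1$, so $\lambda_{-1}(N^h\otimes\C)$ is invertible in $K_H(V^h)_h$, as recalled before the statement \cite{AS2}. The Atiyah--Segal localization principle, adapted to $C^*(V,F)$, then confines the index morphism to the tubular neighbourhood of $V^h$: the contribution of $TF$ away from $V^h$ localizes to zero and restriction to $V^h$ becomes an isomorphism on localized $K$-theory. On the tubular neighbourhood the foliation $C^*$-algebra is equivariantly Morita equivalent to that of $(V^h,F^h)$ twisted by the normal directions, and the Connes--Skandalis wrong-way map for the projection $N^h\to V^h$ realizes the Thom isomorphism $K_H(TF^h)\cong K_H(TF|_{V^h})$. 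The $K$-theoretic self-intersection identity $i^*i_*=\;\cdot\,\lambda_{-1}(N^h\otimes\C)$ produces exactly the required denominator; composing the restriction $i^*$, the inverse Thom isomorphism, and the multiplicativity of the index under composition of wrong-way maps \cite{CS84} identifies $L(h;E,d)$ with $\big(\Ind_{(V^h,F^h)}\otimes\id_{R(H)_h}\big)\big(i^*[\sigma(E,d)]/\lambda_{-1}(N^h\otimes\C)\big)$, viewed in $K^H(C^*(V,F))_h$ through the Morita extension attached to the transverse submanifold $V^h$.

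The hard part will be the foliation analogue of the Atiyah--Segal localization theorem at the level of $C^*(V,F)$: one must prove that localization at $h$ restricts the equivariant $K$-theory to a neighbourhood of $(V^h,F^h)$ and that the equivariant Connes--Skandalis index is natural under the wrong-way map of the normal bundle. The classical argument leans on compactness and on $K_H(\text{pt})=R(H)$, whereas here the target is noncommutative, so one must control the interplay between the holonomy/transverse structure of $\maG$ and the $H$-action, and verify that the Thom and Morita identifications are compatible with both. The transversality of $V^h$ to $F$ is exactly what guarantees $N^h$ is leafwise and makes these identifications available.
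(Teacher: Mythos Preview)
The paper does not contain a proof of this theorem: it is stated as a citation from \cite{B97} in the review Section~\ref{Review}, and the present paper is devoted to applications of it (and of the cohomological refinements from \cite{BH10}). There is therefore no in-paper proof to compare against.

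That said, your outline is the correct template and matches the strategy of \cite{B97}: one transplants the Atiyah--Segal localization argument \cite{AS2} to the foliated setting by working with the $R(H)$-module $K^H(C^*(V,F))$, invokes the equivariant version of the Connes--Skandalis index theorem \cite{CS84} to pass from the analytic to the topological index, and then uses excision/Thom/Morita identifications on a tubular neighbourhood of the transverse fixed submanifold $V^h$ to reduce to $\Ind_{(V^h,F^h)}$, with the self-intersection formula $i^*i_* = \cdot\,\lambda_{-1}(N^h\otimes\C)$ producing the denominator. You have also correctly flagged the genuinely nontrivial step: proving the localization isomorphism at the level of the foliation $C^*$-algebra (rather than for a commutative base) and checking compatibility of the equivariant wrong-way maps with the Morita extension associated to the transversal $V^h$. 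This is precisely the technical core of \cite{B97}, and transversality of $V^h$ to $F$ is used exactly where you say, to ensure $N^h$ is leafwise and the tubular neighbourhood carries a compatible foliated structure.
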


In the above formula, note that the RHS lives in $K(C^*(V^h, F^h))\otimes R(H)_h$ while the LHS lives in $K^H(C^*(V, F))_h$. We have thus implicitely used the quasi-trivial Morita extension associated with the transverse submanifold $V^h$, to view 
$$
K(C^*(V^h, F^h))\otimes R(H)_h \simeq K^H(C^*(V^h, F^h))_h \text{ as an $R(H)_h$-submodule of }K^H(C^*(V, F))_h.
$$
See \cite{CS84} as well as \cite{B97}.

\ms

Scalar Lefschetz formulae may be extracted from  Theorem \ref{K-Lef}  by pairing the above Lefschetz formula  with any $H$-equivariant cyclic cocycles over the smooth convolution algebra  $C_c^\infty (\maG)$ in the sense of \cite{BH10},  as explained above. An important class of examples is provided by cocycles which are associated with $H$-equivariant (even) Fredholm modules $(H, F)$ over the algebra $C_c^\infty (\maG)$ \cite{C85, BH10}. Such equivariant Fredholm modules are in turn  generated by geometric spectral triples  given by transversely elliptic Dirac-type operators which are $H$-invariant, see Section \ref{spec&Integrality}.

\ms

We now explain how to generate a large collection of $H$-equivariant cyclic cocycles when the space of leaves is $H$-trivial, meaning that it preserves the leaves and satisfies an extra natural condition. This latter condition is satisfied in all the examples we have in mind and by a large class of foliations. Moreover, when $H$ is connected, it is automatically satisfied for all foliations.  

\begin{definition}
A given diffeomorphism $f:V\to V$ which preserves the leaves of $F$, is a holonomy diffeomorphism (with respect to the foliation $F$) if  there exists a smooth map $\varphi^f: V \to \cG$, so that for any $x\in V$, $s(\varphi^f (x)) = x$, $r(\varphi^f (x)) = f(x)$, and the holonomy germ along $\varphi^f(x)$ coincides on small enough transversals with the action of $f$.
\end{definition}

\begin{remark}\
Theorem \ref{K-Lef} shows that the Lefschetz class $L(h; E,d)$ only depends on the restriction of all data to the $F$-saturation $\Sat (V^h)$ of $V^h$ in $V$, say to the smooth foliated open submanifold composed of those points whose leaves intersect $V^h$. But the action of the group $H$ restricts to this open foliated submanifold where it obviously preserves the leaves and  is automatically given by holonomy diffeomorphisms.
\end{remark}

In order to translate the Lefschetz formula of Theorem \ref{K-Lef} to cohomology, we restrict ourselves to the case where the group $H$ preserves each leaf of $(V, F)$ and acts by holonomy diffeomorphisms \cite{BH10}.  
We now recall the higher Lefschetz formula obtained in \cite{BH10} using any closed holonomy invariant Haefliger current $C$. The simplest example of such current in degree $0$ is a holonomy invariant transverse measure when this latter exists. But other interesting currents of any order provide interesting formulae as well. 
 Recall that $H$   acts on $(V, F)$ by holonomy diffeomorphisms which are isometries of $(V, g)$. 

\ms
Every closed even holonomy invariant current pairs with $H_c^{ev} (V/F)$,
the even Haefliger cohomology of $F$, \cite{Hae}.  
In \cite{BH10}, we defined the equivariant Connes-Chern character
$$
\ch^H: K^H (C_c^\infty (\cG, E)) \longrightarrow H_c^{ev} (V/F) \otimes C(H)^H,
$$
which extends to the equivariant $K$-theory of the completion $C^*$-algebra 
$K^H(C^*(V,F;E))\simeq K^H (C^*(V, F))$ and yields for general not necessarily abelian $H$:
$$
\ch^H: K^H (C^*(V, F)) \longrightarrow H_c^{ev} (V/F) \otimes C(H)^H.
$$
Moreover, we also proved in \cite{BH04} that any (even) holonomy invariant current $C$ produces a well defined even cyclic cocycle $\tau_C$ on the algebra $C_c^\infty (\maG; E)$ so that the pairing of the class $[\tau_C]$ with $K^H (C_c^\infty (\maG; E))$ given by Theorem \ref{Pairing} coincides with the composition of $\ch^H$ with evaluation against the homology class of $C$. Hence, when $H$ is the topologically cyclic abelian group generated by $h$, pairing   $\ch^H (\Ind_V^H(E,d))$  with the homology class of $C$, gives a continuous function on $H$ which can be evaluated  at $h$, to produce the  $C$-higher Lefschetz number $
L_C(h; E, d)$.
Said differently, 

\begin{definition}
Let $C$ be a closed even holonomy invariant current. The $C$-higher Lefschetz numbers $L_C(h; E, d))$ of $h\in H$ with respect to the $H$-equivariant leafwise elliptic complex $(E, d)$  is  
$$
L_C(h; E, d) := \langle \ch^H (\Ind_V^H(E,d)), [C]\rangle \; (h) \; \text{ a complex number in general.}
$$
\end{definition}

This higher $C$-Lefschetz number was expressed in \cite{BH10}  in terms of characteristic classes at the fixed points of  $h$.  Here is the precise statement.

\begin{theorem}\label{basic} (Higher Lefschetz Theorem,  \cite{BH10})
Assume that $H$ is a topologically cyclic compact Lie group of leaf-preserving holonomy diffeomorphisms which is topologically generated by the smooth diffeomorpism $h$. Moreover, assume that $F$ is oriented and that the fixed-point submanifold $V^h=V^H$ of $h$ is transverse to the foliation with the induced foliation $F^h.$  Denote by $i:TF^{h} \hookrightarrow TF$ the inclusion and by $N^h$ the normal bundle to $V^h$ in $V$. Then for any closed even dimensional holonomy invariant current $C$ on $(V,F)$, 
$$
L_{C}(h;E,d) \,\, = \,\, \Ind_{C|_{V^h}} \left(\frac{i^{*}[\sigma(E,d)](h)} {\lambda_{-1}(N^{h}\otimes{\C})(h)} \right).
$$
\end{theorem}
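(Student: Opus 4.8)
The plan is to substitute the $K$-theory Lefschetz formula of Theorem~\ref{K-Lef} into the equivariant cyclic pairing of Theorem~\ref{Pairing}, and then to recognize the outcome as the current index pairing on the fixed-point foliation $(V^h,F^h)$. First I would rewrite the left-hand side: by the definition of $L_C(h;E,d)$ together with the identification, proved in \cite{BH04,BH10}, of the pairing with $[\tau_C]$ as the composition of $\ch^H$ with evaluation against the homology class of $C$, one has
$$
L_C(h;E,d)=\big\langle [\tau_C],\,\Ind^H(E,d)\big\rangle(h),
$$
where $\langle\,\cdot\,,\,\cdot\,\rangle$ is the $R(H)$-linear pairing of Theorem~\ref{Pairing} with values in $C(H)^H$ and $(h)$ denotes evaluation of the resulting central function at $h$.

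Next I would localize at $h$. Since the pairing of Theorem~\ref{Pairing} is $R(H)$-linear and extends through $\ch^H$ to the completion $C^*$-algebra, it descends to the localized module $K^H(C^*(V,F))_h$, in which the image of $\Ind^H(E,d)$ is precisely the Lefschetz class $L(h;E,d)$. Substituting Theorem~\ref{K-Lef} then gives
$$
L_C(h;E,d)=\Big\langle [\tau_C],\,\big(\Ind_{(V^h,F^h)}\otimes\id_{R(H)_h}\big)\Big(\frac{i^*[\sigma(E,d)]}{\lambda_{-1}(N^h\otimes\C)}\Big)\Big\rangle(h),
$$
the argument of $\tau_C$ being the image in $K^H(C^*(V,F))_h$ of a class in $K(C^*(V^h,F^h))\otimes R(H)_h$ under the quasi-trivial Morita extension attached to the transverse submanifold $V^h$.

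The crux is to transport $\tau_C$ across this Morita extension. The key lemma I would prove is that, under the Morita morphism associated with $V^h$, the cocycle $\tau_C$ on $C_c^\infty(\maG)$ pulls back to the cocycle $\tau_{C|_{V^h}}$ attached to the restricted current $C|_{V^h}$ on the holonomy groupoid of $(V^h,F^h)$. This is exactly where transversality of $V^h$ to $F$ and holonomy-invariance of $C$ enter: the restriction of a closed holonomy invariant current of the ambient foliation to a transverse submanifold is a closed holonomy invariant current for the induced foliation, and this restriction is compatible with the groupoid restriction underlying the Morita map. Granting this, Morita invariance of the cyclic pairing reduces the computation to
$$
L_C(h;E,d)=\Big\langle [\tau_{C|_{V^h}}],\,\big(\Ind_{(V^h,F^h)}\otimes\id\big)\Big(\frac{i^*[\sigma(E,d)]}{\lambda_{-1}(N^h\otimes\C)}\Big)\Big\rangle(h).
$$

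Finally, evaluating the central function at $h$ replaces each $R(H)_h$-factor by its value at $h$, turning $i^*[\sigma(E,d)]$ into $i^*[\sigma(E,d)](h)$ and $\lambda_{-1}(N^h\otimes\C)$ into the invertible scalar class $\lambda_{-1}(N^h\otimes\C)(h)$; by definition the pairing of $[\tau_{C|_{V^h}}]$ with $\Ind_{(V^h,F^h)}(u)$ is the current index number $\Ind_{C|_{V^h}}(u)$, namely the composition of the Connes-Skandalis topological index with the $C|_{V^h}$-pairing. This yields the stated formula; by the longitudinal index theorem for $(V^h,F^h)$ it also equals the cohomological expression $\big\langle [C|_{V^h}],\int_{F^h}\ch_\C(\cdots)\,\Td(TF^h\otimes\C)\big\rangle$ recorded in the introduction. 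I expect the third step to be the main obstacle: establishing the precise compatibility of the current-cocycle construction with the Morita extension, and simultaneously verifying that localization and evaluation at $h$ commute with the pairing so that no localization denominators obstruct the identification.
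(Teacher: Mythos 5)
Your proposal is correct and follows essentially the same route as the paper's proof (carried out in \cite{BH10}): substitute the $K$-theory Lefschetz formula of Theorem~\ref{K-Lef} into the equivariant pairing with $[\tau_C]$, pass to the localization at $h$ (legitimate since evaluation at $h$ inverts precisely the characters outside the prime ideal of $h$), and transport $\tau_C$ across the quasi-trivial Morita extension via the compatibility $\langle \maM(y), [\tau_C]\rangle = \langle y, [\tau_{C|_{V^h}}]\rangle$. Your identification of this Morita compatibility of the current cocycle as the crux matches the paper exactly --- it is the very lemma from \cite{BH10} that the present paper cites again, and adapts to the residue cocycle, in the proof of Theorem~\ref{LefInt}.
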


In this formula, $C|_{V^h}$ is the closed holonomy invariant current which is the restriction of $C$ to $(V^h, F^h)$;  and $\Ind_{C|_{V^h}}:K(TF^{h})\otimes \C \rightarrow \C$ is  the  higher $C|_{V^h}$-index map on the foliation $(V^h, F^h)$  defined as $
\Ind_{C|_{V^h}} = \tau_{C\vert_{V^h}} \circ (\Ind_{V^h}\otimes \id_\C)$ where $\Ind_{V^h}$ is the Connes-Skandalis index morphism for $(V^h, F^h)$. See  \cite{BH04} for more details.

\ms

Applying the Connes-Chern character, Theorem \ref{basic} gives the following  more computable expression.

\begin{theorem}\label{basic2}\   (Cohomological Lefschetz Formula, \cite{BH10})  Under the assumptions of Theorem \ref{basic}, the following formula holds
$$
L_C(h; E,d)=\left< \left[C|_{V^h}\right],  \int_{F^h} \frac{ch_{\C}(i^{*}[\sigma(E,d)](h))}{ch_{\C}(\lambda_{-1}(N^{h}
\otimes{\C})(h))} \, \Td(TF^{h} \otimes \C) \right >,
$$
where $\Td$ is the Todd characteristic class, $\ch_\C=\ch\otimes \id_\C$ with $\ch$ being the usual topological Chern character, and $\dd\int_{F^h}: H_c^* (V^h) \rightarrow H_c^{*-p} (V^h/F^h)$ is integration over the leaves of the fixed point foliation $(V^h, F^h)$.
\end{theorem}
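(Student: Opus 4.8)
The plan is to start from Theorem \ref{basic}, which already transfers the computation of $L_C(h;E,d)$ to the fixed-point foliation $(V^h,F^h)$ and expresses it through the higher index map
$$
\Ind_{C|_{V^h}} \;=\; \tau_{C|_{V^h}} \circ (\Ind_{V^h}\otimes \id_\C) : K(TF^h)\otimes \C \longrightarrow \C ,
$$
evaluated on $u := \dfrac{i^{*}[\sigma(E,d)](h)}{\lambda_{-1}(N^{h}\otimes \C)(h)} \in K(TF^h)\otimes \C$. Thus the whole content of passing from Theorem \ref{basic} to Theorem \ref{basic2} is to give a purely cohomological expression for $\tau_{C|_{V^h}}\bigl(\Ind_{V^h}(u)\bigr)$; that is, to apply the Connes--Chern character and read off the foliated Atiyah--Singer formula on $(V^h,F^h)$.

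First I would invoke the two inputs already recorded before Theorem \ref{basic}. The first is the pairing property from \cite{BH04}: the cyclic cocycle $\tau_{C|_{V^h}}$ pairs with a $K$-theory class by composing the Connes--Chern character with evaluation against the homology class of $C|_{V^h}$, so that $\tau_{C|_{V^h}}(x) = \bigl\langle [C|_{V^h}], \ch(x)\bigr\rangle$. The second is the cohomological form of the Connes--Skandalis longitudinal index theorem for $(V^h,F^h)$, namely that the Connes--Chern character intertwines $\Ind_{V^h}$ with integration over the leaves: for every $w\in K(TF^h)\otimes \C$,
$$
\ch\bigl(\Ind_{V^h}(w)\bigr) \;=\; \int_{F^h} \ch_\C(w)\,\Td(TF^h\otimes \C) \quad\text{in } H^{ev}_c(V^h/F^h),
$$
where the Todd factor $\Td(TF^h\otimes\C)$ records the defect between the leafwise $K$-theoretic Thom isomorphism and its cohomological realization, and $\int_{F^h}$ is the Haefliger Gysin map playing the role of the topological wrong-way map.

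Combining these two facts applied to $w=u$ yields
$$
\Ind_{C|_{V^h}}(u) \;=\; \bigl\langle [C|_{V^h}],\, \ch(\Ind_{V^h}(u))\bigr\rangle \;=\; \left\langle [C|_{V^h}],\, \int_{F^h} \ch_\C(u)\,\Td(TF^h\otimes \C)\right\rangle .
$$
The remaining step is to distribute $\ch_\C$ over the quotient defining $u$. Since the complexified Chern character is multiplicative and $\lambda_{-1}(N^{h}\otimes\C)(h)$ is invertible in $K(V^h)\otimes\C$ (as recorded just before Theorem \ref{K-Lef}), its image $\ch_\C(\lambda_{-1}(N^{h}\otimes\C)(h))$ is invertible in cohomology, whence $\ch_\C(u)=\ch_\C(i^{*}[\sigma(E,d)](h))/\ch_\C(\lambda_{-1}(N^{h}\otimes\C)(h))$. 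Substituting this and using Theorem \ref{basic} gives exactly the asserted formula.

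I expect the main obstacle to be the careful justification of the cohomological index identity $\ch\circ\Ind_{V^h}=\int_{F^h}\ch_\C(\,\cdot\,)\Td(TF^h\otimes\C)$, and in particular the bookkeeping of the leafwise Thom isomorphism that produces the Todd correction, together with the verification that $\int_{F^h}$ is the correct Haefliger Gysin map and that it is compatible with the holonomy invariance of $C|_{V^h}$ so that the pairing on the right is well defined on $H^{ev}_c(V^h/F^h)$. Once that identity is in hand, the remaining manipulations---the formal multiplicativity of $\ch_\C$ and the elementary pairing against the current---are routine.
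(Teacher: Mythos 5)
Your proposal is correct and follows essentially the same route as the paper, which obtains Theorem \ref{basic2} from Theorem \ref{basic} precisely by applying the Connes--Chern character: the pairing property $\tau_{C|_{V^h}}(x)=\langle [C|_{V^h}],\ch(x)\rangle$ from \cite{BH04}, the cohomological (Thom-isomorphism) evaluation of the Connes--Skandalis topological index $\ch\circ\Ind_{V^h}=\int_{F^h}\ch_\C(\,\cdot\,)\,\Td(TF^h\otimes\C)$ in Haefliger cohomology, and the multiplicativity of $\ch_\C$ together with the invertibility of $\ch_\C(\lambda_{-1}(N^h\otimes\C)(h))$ are exactly the ingredients used in \cite{BH10}. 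Your closing observation is also apt: since $\Ind_{V^h}$ is by definition the topological index, the index identity is purely a matter of leafwise Thom-isomorphism bookkeeping rather than analysis.
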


An interesting situation is when $V^h$ is a strict transversal, , say with dimension equal to the codimension of the foliation $F$, which correspond for top-dimensional foliations to the case of isolated fixed points. In this case integration over $F^h$ disappears, $\Td(TF^{h} \otimes \C) = 1$ and $N^h = TF | _{V^h}$, so we get:

\begin{corollary}
Under the assumptions of \ref{basic2}, if $V^h$ is a strict transversal, then,
$$
L_C(h;E,d)  \,\, = \,\,  \left < [C|_{V^h}]  , \frac{ \sum_{i} (-1)^{i} \ch_{\C}([E^{i}|_{V^{h}}](h))}{\sum_{j} (-1)^{j} \ch_{\C}
([\wedge^{j}(TF | _{V^h} \otimes \C)](h))} \right >.
$$
\end{corollary}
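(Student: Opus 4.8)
The plan is to derive the corollary as a direct specialization of the cohomological Lefschetz formula of Theorem~\ref{basic2}, exploiting the three geometric simplifications that occur precisely when $V^h$ is a strict transversal. First I would settle the dimension bookkeeping, then identify the bundles entering the formula, and finally reduce the symbol term, which is where the real content lies.

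First I would show that the induced foliation $F^h$ has $0$-dimensional leaves. Since $V^h$ is transverse to $F$ with $\dim V^h = \codim F$, at each point $x\in V^h$ one has $T_xV = T_xV^h + T_xF$ while $\dim T_xV = \dim T_xF + \dim V^h$, which forces $T_xV^h \cap T_xF = 0$; hence $TF^h = TV^h\cap TF|_{V^h} = 0$. Two consequences are immediate: the Todd class is $\Td(TF^h\otimes\C) = \Td(0) = 1$, and, because integration over the leaves of $F^h$ is integration over $0$-dimensional leaves, the map $\int_{F^h}$ disappears (it reduces to the identity on $H^*_c(V^h)$), so the pairing is just $\langle [C|_{V^h}], \,\cdot\,\rangle$ against a class on $V^h$.

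Next I would identify the normal bundle. From $TV|_{V^h} = TV^h \oplus N^h$ and $TV|_{V^h} = TF|_{V^h}\oplus \nu|_{V^h}$, together with $TV^h\cap TF|_{V^h} = 0$ and $TV^h + TF|_{V^h} = TV|_{V^h}$, the quotient projection $TV|_{V^h}\to N^h = TV|_{V^h}/TV^h$ restricts to an $H$-equivariant isomorphism $TF|_{V^h}\cong N^h$. Substituting this into $\lambda_{-1}(N^h\otimes\C) = \sum_j(-1)^j[\Lambda^j(N^h\otimes\C)]$ turns the denominator of Theorem~\ref{basic2} into $\sum_j(-1)^j\ch_\C([\Lambda^j(TF|_{V^h}\otimes\C)](h))$, which is exactly the denominator in the statement; its invertibility after evaluation at $h$ is precisely the invertibility already recorded before Theorem~\ref{K-Lef}.

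The step requiring care is the numerator $\ch_\C(i^*[\sigma(E,d)](h))$. When $TF^h = 0$ the total space of $TF^h$ is $V^h$ itself, so $i^*[\sigma(E,d)]$ lives in $K_H(TF^h) = K_H(V^h)$ and the leafwise cosphere bundle on which ellipticity is imposed has collapsed. I would argue that the difference-bundle construction of the symbol class then reduces to the naive alternating sum, $i^*[\sigma(E,d)] = \sum_i(-1)^i[E^i|_{V^h}]$ in $K_H(V^h)$, with no residual Thom or Euler-class factor precisely because the leafwise cotangent directions have disappeared. Applying the equivariant Chern character and evaluating at $h$, which is additive over the alternating sum, yields $\ch_\C(i^*[\sigma(E,d)](h)) = \sum_i(-1)^i\ch_\C([E^i|_{V^h}](h))$. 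Inserting the three simplifications into Theorem~\ref{basic2} then gives the stated formula. The main obstacle is exactly this last reduction: one must justify carefully that restricting the $K$-theoretic symbol class of a leafwise elliptic complex to the zero section of a now-trivial leafwise cotangent bundle produces the formal alternating sum of coefficient bundles, and that this identification is compatible with both the $H$-equivariant structure and the localization at $h$.
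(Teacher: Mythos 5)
Your proposal is correct and takes essentially the same route as the paper, which derives the corollary from Theorem \ref{basic2} in a single sentence by observing that for a strict transversal $TF^h=0$, so the integration $\int_{F^h}$ disappears, $\Td(TF^{h}\otimes\C)=1$, and $N^h=TF|_{V^h}$. The extra care you take with the symbol term --- that restricting the difference-bundle class $i^*[\sigma(E,d)]$ to the zero section $V^h$ of the collapsed leafwise cotangent bundle yields the alternating sum $\sum_i(-1)^i[E^i|_{V^h}]$ in $K_H(V^h)$, compatibly with equivariance and localization --- is the standard fact the paper leaves implicit, and you handle it correctly.
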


We now describe our formula for Riemannian foliations when the holonomy invariant current is induced by a closed basic form. Recall that a form $\alpha$ on $V$ is basic with respect to $F$ if for any vector field $X$ tangent to $F$,  $i_X \alpha = 0$ and $i_X d\alpha = 0${, where $i_X$ is as usual  interior product with $X$}.  The {space} of such forms is denoted $ \cA^*_{bas}(V,F)$, and the associated {de Rham} cohomology is denoted $\oH^*_{bas}(V,F)$ {and called the basic cohomology of the foliation. It is well known that, for Riemannian foliations, the basic  cohomology is a honest  counterpart for de Rham cohomology of the space of leaves \cite{EKHS}.  It is well known for instance that 
$\oH^*_{bas}(V,F)$ satisfies Poincare duality if and only if $F$ is minimal (the mean curvature of the leaves of $F$ is zero), and that 
$\oH^q_{bas}(V,F)$ is either $0$ or $\R$, where $q$ is the codimension of $F$.  See \cite{ KT, Carr, EKHS}.
Given a basic form  $\alpha$, and a smooth compactly supported form $\beta \in \cA^*_c (T)$,  consider the complex number
$$
C_{\alpha}(\beta) \,\, :=  \,\,  \int_T  \beta \wedge \alpha |_T. 
$$ 
Then the complex number $C_{\alpha}(\beta)$ only depends on the Haefliger form $[\beta]$ represented by $\beta$ and maybe denoted $C_{\alpha}([\beta])$.  Indeed,  adding to $\beta$  forms of the type $h^*_{\gam} \what\beta - \what\beta$, does not alter the pairing, since basic forms are holonomy invariant, that is $h^*(\alp |_T) = \alp |_T$,  so $\dd \int_T \alpha |_T \wedge (h^*_{\gam} \what\beta - \what\beta) = 0$. Hence any basic form $\alpha$ gives rise to a holonomy invariant current $C_\alpha$}    In addition, it is easy to see that $dC_{\alpha} = C_{d\alpha}$, so $C_{\alpha}$ is closed if $\alpha$ is closed, and we end up with the induced map on homologies
$$
C:\oH^*_{bas}(V,F)  \longrightarrow   \oH_*(V/F),
$$
from basic cohomology to Haefliger homology.
\begin{remark}\
Note that $C$ is an isomorphism if the mean curvature of the leaves of $F$ is holonomy invariant \cite{KT}.
\end{remark}

 For the closed holonomy invariant current $C_\alpha$ given by a closed basic form $\alpha$,  applying Theorem \ref{basic2} gives, 
\begin{Equation}\hspace{2cm}
$\dd L_{C_{\alpha}}(h;E,d) \,\, = \,\, \int_{V^h}\frac{ch_{\C}(i^{*}[\sigma(E,d)](h))}{ch_{\C}(\lambda_{-1}(N^{h}\otimes{\C})(h))} \, \Td(TF^{h} \otimes \C) \wedge i^* \alpha,
$
\end{Equation}
\noindent
where $i:V^h \to V$ and $i:TF^h \to TF$.  
\color{black}

\ms

{Finally, we point out that in the general case all the computations of \cite{AS3} can be rewritten from our point of view by replacing the characteristic classes by their power series.   For example, recall from \cite{AS3}, p.\ 560, that the normal bundle $N^{h}$ decomposes under the orthogonal action of $h$ into
$$
N^{h}=N^{h}(-1) \oplus \sum_{0<\theta<\pi} N^{h}(\theta),
$$
where $N^{h}(-1)$ is a real bundle on which $h$ acts by multiplication by  $-1$, and each $N^{h}(\theta)$ is a sum of complex line bundles on which $h$ acts by multiplication by $e^{i\theta}$.
Let $s_1=dim_{\R}(N^{h}(-1))$, and  $x_{1}, \ldots, x_{[s_1/2]}$
be the standard characters which generate the Pontryagin dual of the maximal torus
of the orthogonal group ${\bf O}(s_1)$.  Let $s(\theta)=dim_{\C}(N^{h}(\theta))$, and $y_{1}, \ldots, y_{s(\theta)}$ be the corresponding characters for unitary group ${\bf U}(s(\theta))$.
Set
$$ 
\begin{array}{ccccl}
{\mathcal{R}}&=& \sum {\mathcal{R}}_{r}(p_{1}, \ldots ,p_{r}) &=&
 \left [ \prod_{j=1}^{[s_1/2]} ((1+e^{x_{j}})/2) \, ((1+e^{-x_{j}})/2) \right ] ^{-1}\\
{\mathcal{S}}&=& \sum {\mathcal{S}}_{r}^{\theta}(c_{1}, \ldots ,c_{r}) &=&
\left [ \prod_{j=1}^{s(\theta)} (1-e^{y_{j}+i\theta})( 1-e^{-y_{j}-i\theta})/
((1-e^{i\theta})(1-e^{-i\theta}))\right ] ^{-1}\\
\end{array}
$$
where $p_{i}$ is the i$th$ symetric function of the $x_{i}'s$ (i.\ e.\ a Pontryagin class)
and $c_{i}$ is the i$th$ symetric function of the $y_{i}'s$ (i.\ e.\ a Chern class).
Then we have 
\begin{theorem}\label{basic3} Under  the assumptions of Theorem 
\ref{basic2},
$$
L_C(h;E,d)=\left < \frac {ch_{\C}(i^{*}[\sigma(E,d)](h))}{det(1-h| \, {N^{h}})} \,
\prod_{0< \theta < \pi} {\mathcal{S}}^{\theta}(N^{h}(\theta)) \, {\mathcal{R}}(N^{h}(-1)) \, Td(TF^{h} \otimes \C), [C|_{V^h}] \right >.
$$
\end{theorem}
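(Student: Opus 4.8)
The plan is to obtain Theorem \ref{basic3} directly from the cohomological formula of Theorem \ref{basic2} by carrying out, over the fixed-point foliation $(V^h,F^h)$, exactly the formal manipulation of characteristic classes performed by Atiyah and Singer in \cite{AS3}, now read as an identity of power series in the Pontryagin and Chern classes. The numerator $\ch_\C(i^*[\sigma(E,d)](h))$, the Todd factor $\Td(TF^h\otimes\C)$, and the pairing with $[C|_{V^h}]$ (which already incorporates the integration $\int_{F^h}$ over the leaves of $F^h$) are untouched; the entire content of the statement is the rewriting of the denominator $\ch_\C(\lambda_{-1}(N^h\otimes\C)(h))$ in the equivalent multiplicative form
$$
\ch_\C(\lambda_{-1}(N^h\otimes\C)(h)) \,=\, \det(1-h\,|\,N^h)\cdot \mathcal{R}(N^h(-1))^{-1}\prod_{0<\theta<\pi}\mathcal{S}^\theta(N^h(\theta))^{-1},
$$
after which one inverts and substitutes into Theorem \ref{basic2}.

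First I would record the orthogonal eigenbundle splitting $N^h=N^h(-1)\oplus\sum_{0<\theta<\pi}N^h(\theta)$ and invoke the multiplicativity of both $\lambda_{-1}$, as an operation in the $\lambda$-ring $K_H(V^h)$, and of the evaluation homomorphism $\ch_\C(\,\cdot\,)(h)$, so that the denominator factors as a product over the pieces $N^h(-1)$ and $N^h(\theta)$. Then I would compute each factor in terms of roots. Writing the Chern roots of $N^h(\theta)$ as $y_1,\dots,y_{s(\theta)}$ and using that $h$ acts by $e^{i\theta}$, one gets $\ch_\C(\lambda_{-1}(N^h(\theta)\otimes\C)(h))=\prod_j(1-e^{y_j+i\theta})(1-e^{-y_j-i\theta})$; writing the roots of $N^h(-1)$ as $x_1,\dots,x_{[s_1/2]}$ and using that $h$ acts by $-1$, one gets $\ch_\C(\lambda_{-1}(N^h(-1)\otimes\C)(h))=\prod_j(1+e^{x_j})(1+e^{-x_j})$ (with the numerical leftover in odd rank absorbed below).

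The decisive step is to separate in each factor its degree-zero, purely numerical part from the higher-degree corrections. On $N^h(\theta)$ the degree-zero part of one line is $(1-e^{i\theta})(1-e^{-i\theta})$, whose product over the $s(\theta)$ lines is $\det(1-h\,|\,N^h(\theta))$; dividing the factor by this scalar reproduces $\mathcal{S}^\theta(N^h(\theta))^{-1}$ verbatim from its definition in the excerpt. Likewise on $N^h(-1)$ the degree-zero part collects to $2^{s_1}=\det(1-h\,|\,N^h(-1))$, and dividing it out reproduces $\mathcal{R}(N^h(-1))^{-1}$, whose constant term is $1$. Multiplying these identities and using multiplicativity of $\det(1-h\,|\,\cdot\,)$ yields the boxed factorization; inverting gives $\ch_\C(\lambda_{-1}(N^h\otimes\C)(h))^{-1}=\det(1-h\,|\,N^h)^{-1}\,\mathcal{R}(N^h(-1))\prod_{0<\theta<\pi}\mathcal{S}^\theta(N^h(\theta))$, and inserting this into Theorem \ref{basic2} produces the stated formula.

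The one point requiring care, and the place where the argument could slip, is the bookkeeping of the normalization constants (the powers of $2$ and the factors $1-e^{\pm i\theta}$) together with the legitimacy of inverting $\ch_\C(\lambda_{-1}(N^h\otimes\C)(h))$. The inversion is valid precisely because the degree-zero part $\det(1-h\,|\,N^h)$ is nonzero: by hypothesis $V^h$ is the \emph{full} fixed-point set, so $h$ carries no eigenvalue $1$ on $N^h$, which is the same invertibility already exploited in Theorem \ref{K-Lef} following \cite{AS2}. Consequently the inverse is a genuine formal power series in the nilpotent classes, $\mathcal{R}$ and the $\mathcal{S}^\theta$ are exactly its higher-order corrections, and no convergence issue arises; matching the remaining scalar factors against the definitions of $\mathcal{R}$ and $\mathcal{S}$ is then the routine verification, identical to that of \cite{AS3}.
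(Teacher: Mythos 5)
Your proposal is correct and takes essentially the same route as the paper, which offers no detailed proof of Theorem \ref{basic3} beyond the remark that it follows from Theorem \ref{basic2} by transcribing the characteristic-class computations of \cite{AS3}, p.~560, into power series --- precisely the factorization $\ch_\C(\lambda_{-1}(N^h\otimes\C)(h))=\det(1-h\,|\,N^h)\,\mathcal{R}(N^h(-1))^{-1}\prod_{0<\theta<\pi}\mathcal{S}^\theta(N^h(\theta))^{-1}$ that you derive and invert. Your bookkeeping is accurate throughout: the eigenbundle splitting, the Chern-root evaluation of each factor, the absorption of the extra factor of $2$ when $N^h(-1)$ has odd rank, and the justification of invertibility via the absence of eigenvalue $1$ of $h$ on $N^h$ all match what the Atiyah--Singer manipulation requires.
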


For the four classical complexes (deRham, Signature, Spin, and Dolbeault), there are refinements of this general formula.  We refer the reader to \cite{AS3} for those formulas.  

\section{Lefschetz Examples}\label{LefEGs}

\subsection{The de Rham complex}
For the deRham complex, things are particularly simple, and we begin this section with some general results for this case.   In particular, if (E,d) is the de Rham complex along the leaves, then only the zero component of the  closed holonomy invariant current $C$ is involved.  To see this, we compute $\dd\frac{i^{*}[\sigma(E,d)](h)}{\lambda_{-1}(N^{h}\otimes{\C})(h)}$  in this case.
Now $\sigma(E,d)$ is given by the sequence
$$
0 \longrightarrow \Lambda^0_{\C} T^*F \stackrel{\wedge \xi}{\longrightarrow} \Lambda^1_{\C} T^*F \stackrel{\wedge \xi}{\longrightarrow}\Lambda^2_{\C} T^*F \stackrel{\wedge \xi}{\longrightarrow} \cdots
$$
where $\xi \in T^*F$ and $ \Lambda^k_{\C} T^*F =  \Lambda^k T^*F \otimes \C$ is the complexified $k$-th exterior power.  When we restrict this sequence to $F^h$, we get 
$\dd \Lambda^k T^*F \,\, = \,\, \dd\oplus_{i+j=k} \Lambda^i T^*F^h \otimes\Lambda^j N^{h,*},$
and $\xi \in T^*F^h$ acts only on the first factor.  Thus we have that 
$$
i^{*}[\sigma(E,d)] \,\, = \,\,   \sigma(E^h,d) \lambda_{-1}(N^{h}\otimes{\C}),
$$
where $(E^h,d)$ is the de Rham complex along $F^h$.  So,
$$
\frac{i^{*}[\sigma(E,d)](h)}{\lambda_{-1}(N^{h}\otimes{\C})(h)} \,\, = \,\, \sigma(E^h,d)(h).
$$
Now $\ch_{\C}(\sigma(E^h,d)(h)) \Td(TF^{h} \otimes \C)$ is just $\chi(TF^h)$, the Euler class of $TF^h$, 
which is non-zero only in dimension equal to the dimension of $F^h$.  
Thus  $\dd \int_{F^h}\chi(TF^h)$ is a zero dimensional Haefliger cohomology class, where $ \dd \int_{F^h}$ is Haefliger's integration over the plaques of $F^h$, \cite{Hae}.  But, 
$$
L_C(h;\text{de Rham})\,\, :=  \langle  \left(\int_{F^h}\chi(TF^h) \right ), [C|_{V^h}]\rangle \,\, = \,\, \langle \left ( \int_{F^h}\chi(TF^h) \right ), [C_{0}]\rangle,
$$
where $C_{0}$ is the zero component of $C|_{V^h}$, a holonomy invariant transverse distribution.  So the higher components of $C$ do not contribute to the Lefschetz formula for the de Rham complex. As an obvious corollary, $L_C(h;\text{de Rham})$ vanishes for all holonomy invariant transverse currents without  zero component. When the foliation is oriented for instance, and denoting by $[V/F]$  the class of the  transverse fundamental cycle, we get
 $$
L_{[V/F]}(h;\text{de Rham})=0.
$$
 
\ms

{In the special case where $V^h$ is a strict transversal, the leafwise tangent bundle $TF^h$ reduces to the zero bundle and we get:
$$
 \frac{ \sum_{i} (-1)^{i} \ch_{\C}([E^{i}|_{V^{h}}](h))}{\sum_{j} (-1)^{j} \ch_{\C}
([\wedge^{j}(TF^{h} \otimes \C)](h))} \,\, =\,\,
\frac{\sum_{i} (-1)^{i} \ch_{\C}([\wedge^{i}(TF^{h} \otimes \C)](h))}{\sum_{j} (-1)^{j} \ch_{\C}
([\wedge^{j}(TF{h} \otimes \C)](h))} \,\, =\,\, 1.
$$
Hence,  in the case where  $C_0$ is a positive holonomy invariant 
transverse 
measure, one gets  
$$
L_C(h;\text{de Rham}) = L_{C_0}(h;\text{de Rham}) = C_0(V^h)  \geq 0,
$$  compare with \cite{HL90}.   
Second,
if  the foliation is transversally oriented,  the zero-th component of the transverse fundamental class is zero, \cite{C86}.  More specifically, 
$$
L_{[V/F]}(h;\text{de Rham})=0.
$$
To see this, recall that for Haefliger forms,
$\dd [[V/F]|_{V^h}] \left (\omega \right ) = 0$, unless
the degree of $\omega = q$.  As $q > 0$,  $$
L_{[V/F]}(h;\text{de Rham}) \,\, = \,\, \left < \chi(TF^h), [[V/F]|_{V^h}]\right > \,\, = \,\,  [[V/F]|_{V^h}]\left ( \int_{F^h}\chi(TF^h)\right ) \,\, = \,\,  0.
$$} 

\subsection{A Universal Example}
The following is an extension of Section 4 in \cite{HL90}, which provides non trivial Lefschetz formulae for the four classical geometric operators.   We will refer to that material freely.   In this case, the higher terms of the Haefliger Lefschetz class are non-trivial for the four classical geometric operators, if we  twist them by an appropriate universal leafwise flat bundle.   

\ms
 
Consider the following ({\em universal}) $\C$ bundle over the torus $\T^{2k} = \R^{2k}/ \Z^{2k}$. 
Let $(x,y) = (x_1,y_1, \cdots, x_k,y_k)$ be coordinates on $\R^{2k}$, and $(m,n) = (m_1, n_1\ldots,m_k, n_k) \in \Z^{2k}$. 
Set 
$$
W =  (\R^{2k} \times \C) /  \Z^{2k}, 
$$
where $(m,n)$ acts on  $(x,y,z)$ by
$$
(m,n) \cdot (x,y,z) \,\, = \,\,  (x_1 + m_1,y_1 + n_1, \cdots, x_k + m_k,y_k +n_k, \exp(2\pi i\sum n_j x_j) z). 
$$
If we denote by $\pi_j:\T^{2k} \to \T^2$ the projection $\pi_j(x,y) = (x_j,y_j)$, then $W = \otimes_j \pi_j^* \what{W}$, where $\what{W}$ is the  bundle over $\T^2$ given by   
$$
\what{W} \,\, = \,\,  [0,1] \times [0,1] \times \C / \sim,
$$
where $(x,0,z) \sim (x,1,\exp(2\pi i x)z)$ and $(0,y,z) \sim (1,y,z)$.  It is a straight forward calculation 
that $\ch(\what{W}) = 1 + \eta \beta$, where $\eta$, $\beta$ is the natural basis of $\oH^1 (\T^{2};\R)$.  It follows immediately that if 
$\eta_1,\beta_1, ...,\eta_k, \beta_k$ is the natural basis of $\oH^1 (\T^{2k};\R)$,  then

\begin{proposition} \label{Luszch}
\hspace{2cm}  $\dd \ch (W)  \,\,=\,\, \prod_{i=1}^k (1 + \eta_i \beta_i).$
\end{proposition}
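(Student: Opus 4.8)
The plan is to reduce the computation of $\ch(W)$ to the single line bundle $\what W$ over $\T^2$ by exploiting the two standard functorial properties of the Chern character: it is multiplicative under tensor products, $\ch(E\ots E')=\ch(E)\ch(E')$, and natural under pullback, $\ch(f^*E)=f^*\ch(E)$. Granting for the moment the asserted identity $\ch(\what W)=1+\eta\beta$ on $\T^2$, the conclusion is immediate: since $W=\bgt_j \pi_j^*\what W$, multiplicativity and naturality give
$$
\ch(W)=\prod_{j=1}^k \ch(\pi_j^*\what W)=\prod_{j=1}^k \pi_j^*\ch(\what W)=\prod_{j=1}^k \pi_j^*(1+\eta\beta).
$$
Because $\pi_j:\T^{2k}\to\T^2$ is the projection onto the $j$-th pair of coordinates, it pulls the natural generators $\eta,\beta\in\oH^1(\T^2;\R)$ back to $\eta_j,\beta_j\in\oH^1(\T^{2k};\R)$, so $\pi_j^*(\eta\beta)=\eta_j\beta_j$ and the product collapses to $\prod_{i=1}^k(1+\eta_i\beta_i)$, as claimed.

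So the real content is the base case $\ch(\what W)=1+\eta\beta$, and this is the step I would carry out carefully first. Since $\what W$ is a complex line bundle, $\ch(\what W)=e^{c_1(\what W)}=1+c_1(\what W)+\tfrac12 c_1(\what W)^2+\cdots$; as the base $\T^2$ is two-dimensional, every term beyond $c_1$ vanishes for degree reasons, leaving $\ch(\what W)=1+c_1(\what W)$. It therefore suffices to identify $c_1(\what W)$ with the positive generator $\eta\beta$ of $\oH^2(\T^2;\R)$, equivalently to show $\int_{\T^2}c_1(\what W)=1$.

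To see this I would read off the first Chern number from the clutching data. The bundle $\what W$ is the trivial bundle on $[0,1]^2\times\C$ with the identifications $(x,0,z)\sim(x,1,e^{2\pi i x}z)$ and $(0,y,z)\sim(1,y,z)$; the only nontrivial transition function is the $U(1)$-valued map $g(x)=e^{2\pi i x}$ glued along the $y$-circle, whose winding number as $x$ runs over $[0,1]$ is $1$. Equivalently, one may equip $\what W$ with an explicit connection adapted to this gluing — for instance a connection whose local $1$-form on the fundamental domain is a suitable multiple of $x\,dy$, adjusted so as to be compatible with both identifications — and integrate its curvature; with the normalization $c_1=\tfrac{i}{2\pi}F$ this yields $\int_{\T^2}c_1(\what W)=1$. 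Either route gives $c_1(\what W)=\eta\beta$.

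The one point needing care — and the main (if modest) obstacle — is precisely this curvature/clutching computation: one must check the orientation and normalization conventions so that the winding number $+1$ is matched with the specific generator $\eta\beta$ (and not $-\eta\beta$), and, if a global connection is used, verify that the chosen $1$-form genuinely descends to $\what W$ across both identifications. Once the sign and normalization are pinned down, everything else is the formal naturality and multiplicativity of $\ch$ invoked above.
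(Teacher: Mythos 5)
Your proof follows exactly the paper's route: the paper also decomposes $W=\otimes_j \pi_j^*\what{W}$, asserts $\ch(\what{W})=1+\eta\beta$ as ``a straightforward calculation,'' and concludes by multiplicativity and naturality of $\ch$ under pullback. Your argument is correct, and your clutching-function computation of $c_1(\what{W})$ (winding number $1$, hence $\int_{\T^2}c_1(\what{W})=1$) simply supplies the detail the paper leaves unstated, including the sign check it glosses over.
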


The surface $\Sigma_4$ of genus $4$ has fundamental group $\Gamma$ generated by eight isometries $(\alpha_j)_{0\leq j \leq 7}$ of the Poincar\'e disk $\H$ with the relation 
$$
\alpha_0 \alpha_1^{-1} \alpha_2 \alpha_3^{-1}  \alpha_4 \alpha_5^{-1}  \alpha_6 \alpha_7^{-1}
\alpha_0^{-1} \alpha_1 \alpha_2^{-1}  \alpha_3 \alpha_4^{-1}  \alpha_5 \alpha_6^{-1}
\alpha_7 \,\, = \,\,Id.
$$
The element  $\alpha_j  = \theta^{-j} \alpha \theta^{j} \in SL_2\R$, where 
$\alpha=\left(\begin{array}{cc} d & 0 \\ 0 & d^{-1} \end{array}\right)$,
for proper choice of $d > 0$, and $\theta$ is rotation by $\pi/16$.  Then
$\Sigma_4 = \Gamma\backslash SL_2\R / SO_2$, and we  may take for a fundamental domain of $\Sigma_4$ a regular $16-$gon $\D$ centered at $0 \in \H$.  The action we have chosen for $\Gamma$ identifies opposite edges of $\D$ by translation along the geodesic through the midpoints of the respective edges. 

\ms

Choose $a, b \in \R$, and denote by $A$ and $B$ the translations of $\R^{2k}$ given by  $A(x,y) = (x, y+a)$ and  $B(x,y) = (x, y+b)$.  For $j = 0,3,4,7$ set $f(\alpha_j) = A$, and for $j = 1,2,5,6$ set $f(\alpha_j) = B$.  This induces a homomorphism $f:\pi_1(\Sigma_4) \to \Diff(\T^{2k})$, and so a foliated bundle $V = (SL_2\R /SO_2) \times_f \T^{2k}$, with foliation $F$.    If $a$ and $b$ are not rationally related, i.e. for all $(k, \ell) \in \Z \times \Z - (0,0)$, $ka + \ell b \in \R - \Q$, then all the leaves of the foliation are isomorphic to $\H^2$.  Note that the action given by $f$ preserves $W$, and that $W_V$, the pull back of $W$ to $V$, is flat along the leaves of $F$.
It is also not difficult to see  $V \simeq \Sigma_4 \times \T^{2k}$, so
$$
\oH^*(V;\R) \,\,=\,\, \oH^*(\Sigma_4;\R) \otimes \oH^*(\T^{2k};\R).
$$

\ms

We denote a point in $V$ by $[gSO_2,t]$ where $g \in SL_2\R$ and $t \in \T^{2k}$.  Let $r \in SO_2$ be rotation by $\pi/4$, and note that the action of $r$ on $\D$ is rotation by $\pi/2$.  Define
$h:V \to V$ by 
$$
h([gSO_2,t]) \,\, = \,\, [rgSO_2,t].
$$
Then, as $f(\alpha_j) = f(\alpha_{j+4})$ mod $8$, it follows just as in \cite{HL90} that 
$h$ is well defined and that it also preserves $F$.  In addition,  the fixed point set of $h$ consists of the two fibers of $V$ over the points $v_0$ and $v_1$ in $\Sigma_4$ corresponding to
$0$, the center of $\D$, and the $16$ vertices on the boundary of $\D$, which are all identified when we glue $\D$ to get $\Sigma_4$.  Thus $V^h = \T^{2k}_0  \cup \T^{2k}_1$, and just as in \cite{HL90}, the action of $h$ at these points is rotation by $\pi/2$ in the $\Sigma_4$ direction, and the identity in the $\T^{2k}$ direction.  

\ms

As $V^h$ is a union of strict transversals, we will be applying the second part of Theorem \ref{basic2}, so we need to calculate $\sum_{j} (-1)^{j} \ch_{\C} ([\wedge^{j}(F \, |_{V^{h}} \otimes \C)](h))$,
and $\sum_{i} (-1)^{i} \ch_{\C}([E^{i} \, |_{V^{h}}](h))$ for each of the various leafwise complexes we consider.   We will do the computations only on $\T^{2k}_0$ as they are the same at $\T^{2k}_1$.
Note that  $TF \, |_{\T^{2k}_0}$ is a trivial $\R^2$ bundle over $\T^{2k}_0$ and the action of $h$  on the fibers of $TF \, |_{\T^{2k}_0}$ is rotation by $\pi/2$.  Thus
$$
\sum_{j} (-1)^{j} \ch_{\C} ([\wedge^{j}(TF \, |_{\T^{2k}_0} \otimes \C)](h)) \,\, = \,\, \det((1-h) \, |_{TF \, |_{\T^{2k}_0} \otimes \C}) \,\, = \,\, 2. 
$$
Next, note that the twisting bundle  $W_V \, | \,  \T^{2k}_j = W$ for $j = 0,1$.  The action of $h$ on $W$ is the identity, so at each $\T^{2k}_j$,
$$
\ch([W_V |_{\T^{2k}_j}](h) = \ch (W)  \,\,=\,\, \prod_{i=1}^k (1 + \eta_i \beta_i).
$$
The expression in the numerator of the formula in the second part of Theorem \ref{basic2} is of the form 
$$
\sum_{i} (-1)^{i} \ch_{\C}([E^{i} \otimes W_V \, |_{V^h}](h) \,\,  =  \,\,  
\ch([W_V |_{V^h}](h)\sum_{i} (-1)^{i} \ch_{\C}([E^{i} \, |_{V^h}](h) \,\, = \,\,
$$
$$
\prod_{i=1}^k (1 + \eta_i \beta_i)\sum_{i} (-1)^{i} \ch_{\C}([E^{i} \, |_{V^h}](h),
$$ 
so we need only compute $\sum_{i} (-1)^{i} \ch_{\C}([E^{i} \, |_{V^h}](h)$ 
for the four geometric complexes.  This is quite simple since each $E^{i} \, |_{V^h}$ is a trivial bundle, in fact the pull-back of the fiber of the associated bundle on $\Sigma_4$ at the base point of the fixed fiber.  It is then easy to see that  $\sum_{i} (-1)^{i} \ch_{\C}([E^{i} \, |_{V^h}](h)$ is just the pull-back of the fixed point index at that point for the action of $h$ on the related complex on $\Sigma_4$.  These were computed in  \cite{HL90}.

\subsubsection{deRham Complex}
$$
\sum_{i} (-1)^{i} \ch_{\C}([E^{i} \, |_{V^h}](h) \,\, = \,\,  2, 
$$
so
$$
L_C(h;\text{deRham}(W_V)) \,\, = \,\,  
\sum_{j=0}^1 \left <  \prod_{i=1}^k (1 + \eta_i \beta_i),C|_{\T^{2k}_j} \right >  \,\, = \,\,  2  \left <  \prod_{i=1}^k (1 + \eta_i \beta_i),C|_{\T^{2k}_0} \right >,
$$
since the Haefliger current $C$ is invariant under holonomy.
It is easy to see that each term in the expression $ \prod_{i=1}^k (1 + \eta_i \beta_i)$ has a Haefliger current which is dual to it.

\subsubsection{{Signature Complex}}
$$
\sum_{i} (-1)^{i} \ch_{\C}([E^{i} \, |_{V^h}](h) \,\, = \,\, -2 i,
$$
so
$$
L_C(h;\text{Signature}(W_V)) \,\, = \,\,   \frac{- 2 i }{2}  \sum_{j=0}^1 \left <  \prod_{i=1}^k (1 + \eta_i \beta_i),C|_{\T^{2k}_j} \right >  \,\, = \,\,   - 2 i \left <  \prod_{i=1}^k (1 + \eta_i \beta_i),C|_{\T^{2k}_0} \right >.
$$
 
\subsubsection{{Dolbeault Complex}}
Since $h$ is a holomorphic map on each leaf, it induces endomorphisms of the Dolbeault leafwise complexes (for $k = 0,1$)
$$
0 \longrightarrow C^{\infty}(\wedge^k T^*F \otimes_{\C} \wedge^0 \overline{T}^*F)\longrightarrow   
C^{\infty}(\wedge^k T^*F \otimes_{\C} \wedge^1 \overline{T}^*F)
\longrightarrow 0,
$$
where $ T^*F$ and $\overline{T}^*F$ are the holomorphic and anti-holomorphic cotangent bundles respectively.  We denote these endomorphisms by $\what{h}_0$ and $\what{h}_1$, respectively.   Then 
$$
\sum_{i} (-1)^{i} \ch_{\C}([E^{i} \, |_{V^h}](\what{h}_j) \,\, = \,\, i + 1 - 2j,
$$
so
$$
L_C(\what{h}_j;\text{Dolbeault}(W_V)) \,\, = \,\,  
 \frac{ i + 1 - 2j}{2}  \sum_{j=0}^1 \left <  \prod_{i=1}^k (1 + \eta_i \beta_i),C|_{\T^{2k}_j} \right >  \,\, = \,\,   ( i + 1 - 2j) \left <  \prod_{i=1}^k (1 + \eta_i \beta_i),C|_{\T^{2k}_0} \right >.
$$

\subsubsection{{Spin Complex}}
Using the two liftings, denoted $\wtit{h}_\pm$, defined in  \cite{HL90}, we have 
$$
\sum_{i} (-1)^{i} \ch_{\C}([E^{i} \, |_{V^h}](\wtit{h}_\pm) \,\, = \,\, \pm{i}{\sqrt{2}},
$$
so
$$
L_C(\wtit{h}_\pm;\text{Spin}(W_V)) \,\, = \,\,  
 \frac{\pm{i}{\sqrt{2}}}{2}  \sum_{j=0}^1 \left <  \prod_{i=1}^k (1 + \eta_i \beta_i),C|_{\T^{2k}_j} \right >  \,\, = \,\,   \pm{i}{\sqrt{2}}\left <  \prod_{i=1}^k (1 + \eta_i \beta_i),C|_{\T^{2k}_0} \right >.
$$

As $k$ is as large as we please and each term in the expression $ \prod_{i=1}^k (1 + \eta_i \beta_i)$ has a Haefliger current which is dual to it, we thus have universal non-triviality for the higher Lefschetz numbers for all four of the classical elliptic complexes. 

\subsection{Some cancellation relations}
{We briefly explain in this paragraph the close relation between Lefschetz formulae and some standard series relations.   We concentrate here on a single example which shows how the triviality of the higher terms in our Lefschetz formula can be used already for simple foliations, to rediscover surprising  cancellation identities. This approach opens up the way for other more involved relations by using more complicated foliations.}.  This application is thus in the spirit of the by now classical results which relate the Lefschetz theorem with number theory, see for instance \cite{HirzebruchZagier}.  {Recall that for any formal variable $z$,
$$
\coth (z) =
\frac{e^z + e^{-z}}{e^z - e^{-z}} =  \frac{1}{z} + \sum_{\ell \geq 1} \, \frac{2^{2\ell}}{(2\ell)!} \,b_{2\ell} \; z^{2\ell-1},
$$
where the $b_n$ are the Bernoulli rational numbers.}

\begin{corollary}
For any integer $n\geq 1$ and any sequence of angles $0\leq  \alpha_0 \leq \cdots \leq \alpha_n <\pi/2$, the formal series in the commuting (formal free) variables $z_0, \cdots, z_n$:
$$
\sum_{j=0}^n  \prod _{j \neq k=0}^n \coth \left[z_k-z_j+i(\alpha_k -\alpha_j)\right] \,\, = \,\,  \frac{1 + (-1)^{n} }{2},
$$
where $\coth \left[z_k-z_j+i(\alpha_k -\alpha_j)\right]$ is the power series 
$\dd\frac{\coth (z_k-z_j) + i \tan (\alpha_k-\alpha_j)}{1 + i \tan (\alpha_k-\alpha_j) \coth (z_k-z_j)}.$
Said, differently, we have for any $n\geq 1$ and any commuting (formal) variables $z_0, \cdots, z_n$
$$
\sum_{j=0}^n  \prod _{j \neq k=0}^n \frac{\coth (z_k-z_j) + i \tan (\alpha_k-\alpha_j)}{1 + i \tan (\alpha_k-\alpha_j) \coth (z_k-z_j)} \,\, = \,\,  \frac{1 + (-1)^{n} }{2}.
$$
\end{corollary}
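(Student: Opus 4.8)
The right-hand side $\frac{1+(-1)^n}{2}$ is exactly the signature $\Sign(\C P^n)$, equal to $1$ for $n$ even and $0$ for $n$ odd, so the plan is to obtain the identity as the two evaluations — fixed-point-localized versus global — of a single higher Lefschetz number for the leafwise \emph{signature} complex on a simple foliation whose leaves are copies of $\C P^n$. The global value will be forced to be the topological constant $\Sign(\C P^n)$ by rigidity of the signature operator, equivalently by the triviality of the higher Haefliger-degree terms advertised in this subsection, while the fixed-point side will produce the sum of $\coth$-products. The two free families of parameters are then transparent: the angles $\alpha_k$ come from the genuine compact group action, and the formal variables $z_k$ come from transverse characteristic classes, packaged together through the substitution $w_k:=z_k+i\alpha_k$.

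\emph{Geometric setup.} Over a closed base $B$ I would take line bundles $L_0,\dots,L_n$ with $2z_k:=c_1(L_k)\in\oH^2(B;\R)$ and form the projectivized bundle $V=\P(L_0\oplus\cdots\oplus L_n)\to B$, a simple foliation $F$ whose leaves are the $\C P^n$ fibers and whose transversal is $B$. Let $h$ generate a circle acting fiberwise with weight $2\alpha_k$ on the summand $L_k$; for an invariant metric $h$ is a leaf-preserving isometry, and since $F$ is simple it acts by holonomy diffeomorphisms. For distinct weights the fixed-point set is the disjoint union of the $n+1$ sections $\sigma_j=\P(L_j)\cong B$, each a strict transversal with $F^h$ trivial, so the strict-transversal form of Theorem \ref{basic2} together with its signature refinement, Theorem \ref{basic3}, applies.

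\emph{Fixed-point side.} Along $\sigma_j$ the vertical normal bundle splits as $\bigoplus_{k\neq j} L_k\otimes L_j^{*}$, the summand indexed by $k$ carrying Chern root $2(z_k-z_j)$ and $h$-weight $2(\alpha_k-\alpha_j)$. Feeding these data into the $G$-signature density of Theorem \ref{basic3}, the summand indexed by $k$ contributes the factor $\coth\big((z_k-z_j)+i(\alpha_k-\alpha_j)\big)$, up to the standard sign and orientation normalization, so that the contribution of $\sigma_j$ is $\prod_{k\neq j}\coth\big(z_k-z_j+i(\alpha_k-\alpha_j)\big)$. Summing over $j=0,\dots,n$ and pairing against a closed holonomy invariant current $C$ on $B$ dual to the relevant monomials in the $z_k$ — available once $B$ carries enough degree-two cohomology, exactly as in the ``$k$ as large as we please'' device of the Universal Example above — reproduces the left-hand side of the asserted identity.

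\emph{Global side and conclusion.} The same Lefschetz number is $h$-independent and equals the fiberwise signature of $\C P^n$: because $F$ is simple the higher formula reduces to the fiberwise Lefschetz theorem, and rigidity of the signature operator collapses all higher Haefliger-degree terms, leaving only the scalar $\Sign(\C P^n)=\frac{1+(-1)^n}{2}$ times the $C$-pairing with the unit class. Equating the two computations gives the identity whenever the $\alpha_k$ are distinct, so that each $\coth$ is a genuine power series in the $z_k$; the general statement, including the formal Laurent case with coincident angles, then follows by analytic continuation in the $\alpha_k$, and the substitution $w_k:=z_k+i\alpha_k$ rewrites everything as $\sum_{j}\prod_{k\neq j}\coth(w_k-w_j)=\frac{1+(-1)^n}{2}$. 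I expect the main obstacle to be the precise bookkeeping of equivariant weights and Chern-root shifts, so that the local density is \emph{exactly} the stated $\coth$ with no stray factor, together with a clean justification that the higher-degree terms vanish. As an independent check, this reduced identity also follows by summing the residues of $w\mapsto\prod_{k=0}^n\coth(w-w_k)$ over one $i\pi$-period strip, where $i\pi$-periodicity cancels the horizontal edges and the two vertical edges contribute $1$ and $(-1)^{n+1}$.
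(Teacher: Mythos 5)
Your proposal is correct and takes essentially the same route as the paper: the paper sets $B=\C P_{k_0}\times\cdots\times\C P_{k_n}$ and $V=P\times_{\T^{n+1}}\C P_n$, which is precisely your projectivization $\P(L_0\oplus\cdots\oplus L_n)$, localizes the fiberwise signature operator at the $n+1$ sections with normal bundles $\oplus_{k\neq j}(E_k\otimes E_j^*)$, and evaluates the global side via the trivial action of the connected torus on the fiberwise cohomology, matching your fixed-point and rigidity steps (including the factor-of-two bookkeeping via $x_k=2z_k$, $\theta_k=2\alpha_k$). The only cosmetic differences are that the paper extracts coefficients using the truncated polynomial cohomology of a product of projective spaces with $k_j$ as large as needed, rather than pairing with dual currents, and your closing residue-strip argument is exactly the ``straightforward (rather astute) computation'' the paper alludes to but does not carry out.
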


Note that the corollary can be translated into an infinite number of relations corresponding to the coefficients of the series. It is also easy to see that it is equivalent to the simpler formula for the power series in the 2 by 2 distinct (for instance complex) variables  $u_0, \cdots, u_n$:
$$
\sum_{j=0}^n  \prod _{j \neq k=0}^n \coth (u_k-u_j) \,\, = \,\,  \frac{1 + (-1)^{n} }{2}.
$$
However the statement makes the proof more understandable. Also notice that these relations can  be derived from a straightforward (rather astute) computation, but we prove it here as an {easy} application of the family Lefschetz theorem.
 
\begin{proof}  
We only need to  give the proof of the formula under the assumption $0<\alpha_0 < \cdots < \alpha_n <\pi/2$. Set $B = \C P_{k_0} \times \cdots \times \C P_{k_n}$, $k_j \geq n$, which has  cohomology  the  polynomial algebra $\R[x_0, \cdots, x_n]$ on two dimensional generators, truncated by the  relations
$x_j^{k_j+1} = 0.$
Denote by $E_0, \cdots, E_n$ the natural $\C$ line bundles over $B$,  so $c_1 (E_j) = x_j$.  Denote by $P$ the principal $U_1 \times \cdots \times U_1 \simeq \T^{n+1}$ bundle associated to $\oplus_j E_j$. The torus $\T^{n+1}$ acts on $\C P_n$  by
$$
(u_0,\ldots ,u_n)[z_0:\cdots :z_{n}] \,\, := \,\, [u_0 z_0 : \cdots : u_n z_n].
$$
Set $V = P \times_{\T^{n+1}} \C P_n$, the quotient of $P \times \C P_n$ by the diagonal action of $\T^{n+1}$.
Choose $a=(a_0, \ldots , a_n) \in T^{n+1}$ with $a_i = \exp(i\theta_j)$, where  $0 < \theta_0 < \cdots < \theta_n <\pi$.  Define the fiberwise action of $a$ on $V$ by
$$
a [p, [z_0:\cdots : z_n]] := [p, [ a_0z_0 : \cdots : a_n z_n]].
$$
Note that the fibers of $\pi:V\to B$ are oriented by their complex structures and that $a$ preserves this orientation. The fixed point submanifold of $a$ is the union of $n+1$ connected components $(B_j)_{0\leq j \leq n}$ where $B_j$ is given by
$$
B_j = \{ [p, [0:\cdots : 0 : 1 : 0 : \cdots : 0]] \, | \, p\in P\} \text{ with } 1 \text{ at the }j-th\text{ position.}
$$
Each $B_j$ is a transversal to the fibration, and is diffeomorphic to $B$ under $\pi$. The normal bundle to $B_j$ is the pull-back under $\pi:B_j \to B$ of the vector bundle $ \oplus_{j\neq k=0}^n ( E_k\otimes E_j^*)$. Moreover, this decomposition into complex line bundles corresponds to the decomposition of the normal bundle of $B_j$ into the eigenspaces of the action of the isometry $a$. We now apply the Lefschetz fixed point formula for families \cite{Bena02} to $a$ with respect to the signature operator $D^+_{vert}$ along the oriented even dimensional fibers of $V$.   We get 
$$
\Sign (a) \,\, = \,\, \ch(L(a;D^+_{vert}))    \,\, = \,\,  \sum_{j=0}^n  \pi_*  \nu_j \quad \in \quad \oH^*(B;\R),
$$
where  $L(a;D^+_{vert}) \in \oK(B) \otimes \C$ is the  Lefschetz class of the index of $D^+_{vert}$ evaluated at $a$.  The expression $\nu_j \in \oH^*(B_j;\R)$ is the local contribution corrsponding to $B_j$ in the fixed point formula. More precisely, denote by $\maM^\theta$ the multiplicative sequence associated with the series in the $x$ variable
$$
\frac{   \tanh(i\theta/2)}{\tanh ((x+i\theta)/2)}.
$$
Then
$$
\pi_* \nu_j   \,\, = \,\,   
\prod_{j \neq k = 0}^n  \left(i \tan (\frac{\theta_k-\theta_j}{2})\right)^{-1} \maM^{\theta_k-\theta_j} ( E_k\otimes E_j^*) \,\, = \,\, 
\prod _{j \neq k=0}^n \coth \left(\frac{x_k-x_j+i(\theta_k -\theta_j)}{2}\right).  
$$

The Lefschetz class $\Sign(a)$ can also be computed as follows.  Denote the fiberwise cohomology of $V \to B$ by $\oH^*(V \, | \,B)$, and the $\pm$ eigenspaces of fiberwise involution associated to the signature operator by $\oH^*_{\pm}(V \, | \,B)$.
Since $\T^n$ is connected, all elements act by the identity on this cohomology.  Thus $\Sign(a) = \Sign(I) = \ch(\oH^n_+(V \, | \,B)) - \ch (\oH^n_-(V \, | \,B))$.  When n is odd, $\oH^n(V \, | \,B)) = 0$, so $\Sign(a) = 0$.  When n is even, $\oH^n(V \, | \,B)$ is one dimensional, and   
$$
\oH^n_+(V \, | \,B) \,\, = \,\, \oH^n(V \, | \,B)  \quad \text{and} \quad \oH^n_-(V \, | \,B) \,\, = \,\, 0.
$$
Thus 
$$
\Sign(a) \,\, = \,\, \ch(\oH^n(V \, | \,B))  \,\, = \,\, 1 \quad \in \quad  \oH^0(B),
$$
since  the line bundle $\oH^n(V \, | \,B)$ over $B$ is a trivial one dimensional  bundle.  
Therefore, in $\oH^*(B;\R)$,
$$
\sum_{j=0}^n\prod _{j \neq k=0}^n \coth \left(\frac{x_k-x_j+i(\theta_k -\theta_j)}{2}\right) \,\, = \,\,  \frac{1 + (-1)^{n} }{2}.
$$
Replacing $x_k$ by $2z_k$ and $\theta_k$ by $2\alpha_k$ and noting that
the dimensions of the $\C P_{k_j}$ are as large as we want completes the proof.
\end{proof} 

\section{{Obstructions to group actions}}

For the {spin Dirac operator along the leaves of a foliation, we can generalize the Atiyah-Hirzebruch Rigidity Theorem  \cite{AH} and prove a higher rigidity theorem which extends the measured rigidity theorem proven in \cite{HL91}.} 

\subsection{The higher rigidity theorem}

Suppose that  $TF$ is an even dimensional spin bundle, with associated leafwise Dirac operator $D$.  Let $\what{A}(F)$ be the $\what{A}$-characteristic class of the vector bundle $TF$.
Using our higher Lefschetz theorem, we generalize Proposition 3.2 of \cite{HL91} by taking into account all the closed holonomy invariant currents  \cite{Hae}. 

\begin{theorem}\label{rigidity}\ 
Assume that the tangent bundle $TF$ to the foliation  $(V,F)$ is spin and that there exists a closed holonomy invariant current $C$ such that 
$$
\left< \int_F \what{A}(TF), {[C]}\right> \neq 0.
$$
Then no compact connected Lie group $H$ can act non trivially as a group of isometries of $V$ preserving the leaves of F and their spin structure.
\end{theorem}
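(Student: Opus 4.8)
The plan is to argue by contradiction, reducing to a nontrivial circle action and then transplanting the Atiyah--Hirzebruch rigidity argument \cite{AH} into the higher Lefschetz formula of Theorem \ref{basic2}. Suppose some nontrivial compact connected Lie group $H$ acts on $V$ by leaf--preserving isometries preserving the spin structure. Since a nontrivial compact connected Lie group is generated by its circle subgroups, at least one circle $S^1\subset H$ must act nontrivially, and I would replace $H$ by this $S^1$. Being connected, the action is automatically by holonomy diffeomorphisms (as recalled in Section \ref{Review}), so the fixed--point hypotheses of Theorem \ref{basic2} are available: for a topological generator $t\in S^1$ one has $V^t=V^{S^1}$, transverse to $F$ with induced foliation $F^t$, and normal bundle $N^t=\sum_\theta N^t(\theta)$ on which $t=e^{i\phi}$ acts by nonzero angles $\theta$. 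Nontriviality of the action forces $V^{S^1}\neq V$, so these normal weights are genuinely present.

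Applying Theorem \ref{basic2} to the leafwise spin Dirac operator $D$ and the given current $C$ produces, for each $t\in S^1$, a number
\[
f(t):=L_C(t;D)=\Big\langle [C|_{V^t}],\ \int_{F^t}\frac{\ch_\C(i^*[\sigma(D)](t))}{\ch_\C(\lambda_{-1}(N^t\otimes\C)(t))}\,\Td(TF^t\otimes\C)\Big\rangle .
\]
At $t=1$ one has $V^1=V$, $N^1=0$, $\lambda_{-1}(0)=1$, and the index integrand reduces to $\what{A}(TF)$, so that $f(1)=\langle[C],\int_F\what{A}(TF)\rangle$, the number assumed nonzero. For $t\neq 1$, I would invoke the explicit spin specialization of Theorem \ref{basic3} (equivalently the Dirac refinement of \cite{AS3}) to rewrite the integrand on each component of $V^{S^1}$ as $\what{A}(TF^t)$ times a product, over the normal weights $m_j$, of factors of the shape $(e^{im_j\phi/2}-e^{-im_j\phi/2})^{-1}=(t^{m_j/2}-t^{-m_j/2})^{-1}$ (up to characteristic forms independent of $t$). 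This is precisely the Atiyah--Hirzebruch integrand, now paired against the fixed Haefliger current $C|_{V^t}$.

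The rigidity step will be to regard $f$ as the restriction to $S^1$ of a holomorphic function of $t\in\C^*$. Because the action preserves the spin structure it lifts to the spinor bundle (after at most passing to the connected double cover of $S^1$, which is harmless), so $\Ind^{S^1}(D)$ is a genuine class over $R(S^1)$ and the pairing of its equivariant Connes--Chern character with the fixed current $C$ yields a \emph{finite} Laurent series $f(t)=\sum_m a_m t^m$, holomorphic on all of $\C^*$ and in particular free of poles. On the other hand, each fixed--point contribution tends to $0$ as $t\to 0$ and as $t\to\infty$, since every factor $(t^{m_j/2}-t^{-m_j/2})^{-1}$ decays at both ends (this is the feature distinguishing the Dirac complex from the signature complex, and it uses $m_j\neq 0$). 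A finite Laurent series vanishing at both $0$ and $\infty$ is identically zero; hence $f\equiv 0$, so $f(1)=\langle[C],\int_F\what{A}(TF)\rangle=0$, contradicting the hypothesis. This contradiction shows no such nontrivial action can exist.

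The hard part will be justifying that $f$ is a finite Laurent series with no poles on $\C^*$: in the classical compact case this is automatic because $\Ind^{S^1}(D)\in R(S^1)=\Z[t,t^{-1}]$, whereas here the index lives in $K^{S^1}(C^*(V,F))$ and one must verify that pairing its equivariant Connes--Chern character with a fixed holonomy invariant current still produces only finitely many weights and that the fixed--point denominators cancel after summing. Controlling this is where the $R(S^1)$--module structure of the equivariant $K$--theory, together with the single--valuedness guaranteed by the assumption that the action preserves the spin structure, must be used; this is the foliated analogue of the step carried out for invariant transverse measures in \cite{HL91}, with Haefliger currents now playing the role of the measure. The remaining points---reducing to a nontrivial circle, the transversality of $V^{S^1}$ and the equality $V^t=V^{S^1}$ for a generator, and upgrading the weightwise decay to decay of the current pairing (the integrand's coefficients being rational in $t$)---should be routine given the machinery of Section \ref{Review}.
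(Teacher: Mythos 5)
Your overall skeleton matches the paper's proof: reduce to a nontrivial circle $\S^1$, apply the cohomological fixed point formula of Theorem \ref{basic2} to the leafwise spin Dirac operator, exploit the Dirac-type normal factors $(z^{m_j/2}-z^{-m_j/2})^{-1}$ to get vanishing at $z=0$ and regularity at $z=\infty$, conclude by a Liouville argument, and evaluate at $z=1$. But the step you yourself flag as ``the hard part'' is a genuine gap, and it is resolved in the paper by a different mechanism than the one you propose. You want $f(t)=L_C(t;D)$ to be a \emph{finite Laurent series}, holomorphic on all of $\C^*$, on the grounds that $\Ind^{\S^1}(D)$ is ``a genuine class over $R(\S^1)$.'' This is exactly what is unavailable in the foliated setting: the equivariant pairing of Theorem \ref{Pairing} takes values only in $C(H)^H$, the continuous central functions on $H$, not in $R(H)\otimes\C$, and $K^{\S^1}(C^*(V,F))$ gives no a priori finiteness of weights once one pairs with a general Haefliger current. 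Moreover, the poles of the local fixed-point contributions sit at \emph{roots of unity on the unit circle itself}, so decay of the integrand as $t\to 0$ and $t\to\infty$ alone cannot rule them out; without controlling those poles your Liouville step does not launch.

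The paper's proof never needs finiteness of weights. It obtains rationality purely from the \emph{local} side: the fixed-point expression $A(F,z)=\sum_\alpha \left\langle \int_{F_\alpha}\what{A}(V^H_\alpha\cap L,z),\, C\,|_{V^H_\alpha}\right\rangle$ is built from the formal series $R(x,z)=1/(1-ze^{-x})$, whose coefficients are rational functions of $z$ with poles only at roots of unity, vanishing at $z=0$ (the $(z^d)^{1/2}$ factor, single-valued thanks to the preserved spin structure) and with no pole at $z=\infty$. The one input needed from the global side is Proposition \ref{continuity}: the map $h\mapsto L_C(h;E,d)$ is \emph{continuous} on $\S^1$, proved via transverse smoothness of the idempotents representing the equivariant index and continuity of the Haefliger current pairing in the Fr\'echet topology. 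Since $L_C(z;E,d)=A(F,z)$ on the dense set of topological generators, continuity forces $A(F,z)$ to be bounded near each root of unity, hence pole-free; being rational, bounded, and zero at $z=0$, it is identically zero, whence $L_C(1;E,d)=\left\langle \int_F\what{A}(TF),[C]\right\rangle=0$, the desired contradiction. So to repair your argument you should replace the finite-Laurent-series claim by this continuity statement (and its proof), which is the genuinely new analytic ingredient of the foliated theorem; the rest of your outline then goes through essentially as in the paper.
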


Recall that since $H$  is connected and preserves each leaf, the fixed point submanifold is  transverse to F, \cite{HL90}. Moreover, in this case all the elements of $H$ act by  holonomy diffeomorphisms. If $C$ is given by a holonomy invariant transverse measure, we get the rigidity theorem of \cite{HL91}.  The power of Theorem \ref{rigidity} is that  applies to a potentially large alternative collection of invariant currents \cite{Hae}. The proof uses the continuity of the higher Lefschetz map $h\rightarrow L_{C}(h;E,d)$ and a straightforward generalization of a classical method based on Liouville's theorem \cite{AH} together with our higher Lefschetz fixed point formula.  A different proof given in the beautiful paper of Lawson and Yau \cite{LawsonYau}, with better results for effective $\S^3$ actions, should also be extendable to foliations.

\ms

The proof of Theorem \ref{rigidity} relies on  the following proposition. Notice that this proposition applies in particular to all leafwise actions of compact connected Lie groups.

\begin{proposition}\label{continuity}  Let $F$ be an oriented foliation of the compact  manifold $V$. 
 Assume that the compact  Lie group $H$ acts by holonomy diffeomorphisms on $(V,F)$. Then for any  $H$-equivariant leafwise elliptic pseudodifferential complex $(E,d)$ over $(V,F)$, and every closed  Haefliger $2k$-current $C$ on $(V,F)$, the map  
$h \mapsto L_{C}(h;E,d)$ is a continuous map from $H$ to $\C$.  
\end{proposition}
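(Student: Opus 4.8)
The plan is to show that $h\mapsto L_C(h;E,d)$ is nothing more than the evaluation map of a single, $h$-independent, continuous central function on $H$, so that continuity becomes a formal consequence of the pairing construction recalled in Theorem \ref{Pairing}. The point is that the three ingredients entering the definition of $L_C(h;E,d)$ --- the equivariant index class $\Ind_V^H(E,d)\in K^H(C^*(V,F))$, the equivariant Connes--Chern character $\ch^H$, and the homology class $[C]$ --- are all fixed once $H$, $(E,d)$ and $C$ are fixed; none of them depends on the point $h$ at which we finally evaluate.

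First I would record that, since $C$ is a closed even (that is, $2k$-dimensional) holonomy invariant Haefliger current and $H$ acts by holonomy diffeomorphisms, the construction of \cite{BH04} attaches to $C$ a well-defined even $H$-equivariant cyclic cocycle $\tau_C$ on $C_c^\infty(\maG; E)$, whose class lies in $H^{even}_\lambda(C_c^\infty(\maG; E), H)$. Next I would invoke Theorem \ref{Pairing}: pairing the equivariant index with $[\tau_C]$ yields a single element
$$
\Phi := \langle \Ind_V^H(E,d), [\tau_C]\rangle = \langle \ch^H(\Ind_V^H(E,d)), [C]\rangle \in C(H)^H,
$$
the two expressions agreeing by the compatibility of $\ch^H$ with evaluation against $[C]$ recalled in the text. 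The crucial gain is that $C(H)^H$ is \emph{by definition} the space of continuous central functions on $H$; the entire analytic substance of producing continuous --- rather than merely abstractly defined --- functions is already packaged in Theorem \ref{Pairing}, i.e.\ in \cite{BH10}.

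It then remains only to identify $L_C(h;E,d)$ with the value $\Phi(h)$ for every $h\in H$. By the Definition preceding Theorem \ref{basic} this is exactly how $L_C(h;E,d)$ is computed; the one point needing a word is that the definition may be phrased through the closed subgroup $\overline{\langle h\rangle}$ topologically generated by $h$. This creates no discrepancy: the equivariant index and the Connes--Chern character are natural with respect to the restriction homomorphism $K^H(C^*(V,F))\to K^{\overline{\langle h\rangle}}(C^*(V,F))$, and under the induced restriction $C(H)^H\to C(\overline{\langle h\rangle})^{\overline{\langle h\rangle}}$ --- which is simply restriction of functions --- the value at $h$ is unchanged. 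Hence $L_C(h;E,d)=\Phi(h)$ for all $h$, and $h\mapsto L_C(h;E,d)$ is the evaluation of the fixed continuous function $\Phi$, so it is continuous from $H$ to $\C$.

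The proposition is therefore essentially a corollary of Theorem \ref{Pairing}, and there is no serious analytic obstacle left at this stage: the delicate work --- constructing the $R(H)$-equivariant pairing so that it takes values in genuinely continuous functions on $H$, rather than in, say, $L^\infty$ classes or purely formal characters --- was carried out in \cite{BH10}. The only genuine thing to verify is the naturality/restriction compatibility used in the identification $L_C(h;E,d)=\Phi(h)$, ensuring that fixing the ambient group $H$ versus passing to the cyclic group generated by each $h$ produces the same scalar. This is routine, but it is the step one must not skip, since it is precisely what legitimizes treating $L_C(\cdot;E,d)$ as a single fixed function rather than as a family of separately defined numbers.
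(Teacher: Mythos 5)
Your proposal is correct as a deduction, but it takes a genuinely different route from the paper. The paper does not derive continuity formally from Theorem \ref{Pairing}; it proves it directly. It represents $\Ind^H(E,d)$ by $H$-invariant idempotents $\wtit{e}=(e,\lambda)$, $\wtit{e}'=(e',\lambda')$ in $\widetilde{C_c^\infty(\maG,E)}\otimes\End(X)$, writes
$L_C(h;E,d)=(\tau_C\sharp\tr)(\Psi^{E\otimes X}(h)\circ e,e,\dots,e)-(\tau_C\sharp\tr)(\Psi^{E\otimes X}(h)\circ e',e',\dots,e')$,
and then verifies by hand that $h\mapsto[(\Psi^{E}(h)\circ f_0)*\delta f_1*\cdots*\delta f_{2k}]|_V$ is continuous into the Fr\'echet algebra of differential forms on $V$ --- the key analytic inputs being that $(\Psi^E(h)\circ e)*\delta e*\cdots*\delta e$ is a uniformly bounded leafwise smoothing operator which is transversely smooth in the sense of \cite{BH11}, so all transverse derivatives remain uniformly bounded --- and concludes because the closed holonomy invariant current $C$ makes $C\circ\int_F$ continuous for this topology. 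That Fr\'echet-continuity computation is the substance of the proposition, and it is exactly the substance your argument outsources to the assertion in Theorem \ref{Pairing} that the pairing lands in $C(H)^H$. Granting that theorem as a black box from \cite{BH10}, together with the stated identification of $\langle\cdot,[\tau_C]\rangle$ with $\langle\ch^H(\cdot),[C]\rangle$, your deduction is sound, and your explicit treatment of the compatibility between the ambient group $H$ and the subgroup $\overline{\langle h\rangle}$ is a worthwhile point the paper handles only implicitly (it simply defines $L_C(h;E,d)$ for every $h\in H$ by the displayed formula for the fixed group $H$). Two caveats: Theorem \ref{Pairing} is stated for actions by $F$-preserving \emph{isometries} of $(V,g)$, whereas Proposition \ref{continuity} assumes only holonomy diffeomorphisms, so you should note that averaging the metric over the compact group $H$ places you in the theorem's hypotheses; and your reduction is only as strong as the continuity claim of Theorem \ref{Pairing} --- if the proof of that claim in \cite{BH10} is essentially the same Fr\'echet estimate for $\tau_C$, your argument is a legitimate repackaging rather than an independent proof, which is presumably why the authors chose to spell out the direct argument (which moreover produces the explicit formula reused elsewhere in the paper).
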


Assuming Proposition \ref{continuity}, we give the proof of Theorem \ref{rigidity}.

\begin{proof}  We follow the proof of \cite{HL91}, making the necessary changes and correcting some errors. We need to extend the classical proof  of Atiyah-Hirzebruch \cite{AH} to our foliation setting.  We may asume that $H = \S ^1 \subset \C$.  By Lemma 3.3 of \cite{HL91}, the fixed point set $V^H$ of $H$ is a closed submanifold of $V$, which is transverse to $F$.  Note that $V^H$ is the fixed point set of any topological generator of $H$, and  $V^H \neq V$, since $H$ acts non-trivially.  If $V^H = \emptyset$, we have immediately that $
\dd \left< \int_F \what{A}(TF), C\right> = 0,
$
for all $C$, a contradiction, {so in summary, $V^H$ is a  transverse submanifold such that} $V \neq V^H \neq \emptyset$.

\ms

Denote by $V^H_{\alpha}$ a  connected component of $V^H$ with the induced foliation $F_{\alpha}$.  If $y \in V^H_{\alpha} \cap L$, then the normal bundle $N$ to $V^H_{\alpha} \cap L$ in $L$ {(=normal bundle to $V^H$ in $V$)} at $y$ can be written as $\oplus N^j_y$, where $H$ acts on $N^j_y$ by the representation $z \mapsto z^{m_j}$ for some positive integer $m_j$.  It follows that the $N^j$ are complex $H$ bundles on $V^H_{\alpha} \cap L$. 

\ms

Let $z \in \C$ and  $x \in \R$.  The function $1/(1-ze^{-x})$ can be written as $R(x,z)$, which is a formal power series in $x$ whose coefficients are rational functions of $z$, having a pole only at $z = 1$, and no pole at $z = \infty$.  In particular, the coefficient of $x^0$ is $1/(1-z)$, and that of $x^n$ for $n >0$  is $((-1)^n/n!)\sum_{k=1}^{\infty}k^n z^k$.  It is an easy exercise to prove that this is
$$
\frac{(-1)^n}{n!}  [z  [\cdots [z[\frac{1}{1-z}]']' \cdots ]'],
$$
where the $'$ indicates differentiation, {and it} is done $n$ times.

\ms

Let $A^r(z) = (z^r)^{1/2}\prod_{\ell=1}^r e^{-x_{\ell}/2}R(x_{\ell},z)$.  Because of the factor $(z^r)^{1/2}$, this is defined only up to sign.  It defines a formal power series in $\sigma_1,..,\sigma_r$ the elementary {symmetric} functions in $x_1,...,x_r$. 

\ms

Now any $z \in H$ which is a topological generator acts on $N^j$ by multiplication by $z^{m_j}$. For such $z$, let $A(N^j,z) = A^{d_j}(z^{m_j})$, where $d_j$ is the (complex) dimension of $N^j$.
The Riemannian connection on $V^H_{\alpha} \cap L$ preserves the bundles $N^j$ and is a complex connection on each $N^j$.  We can replace the {symmetric functions} $\sigma_1,..,\sigma_r$ by the corresponding Chern polynomials in the curvature of this connection.  In this way $A(N^j,z)$ becomes  a differential form on $V^H_{\alpha} \cap L$.  Let $\what{A}(V^H_{\alpha} \cap L)$ be the differential form we get by replacing the Pontrjagin classes in the expression for the $\what{A}$ class of a real vector bundle by the Pontrjagin polynomials in the curvature of the Riemannian connection on $V^H_{\alpha} \cap L$, {see for instance \cite{AS3}, p.\ 570}.  Then define
$$
A(V^H_{\alpha} \cap L,z) \,\, = \, \, (-1)^{(\dim_{\R}F)/2} \what{A}(V^H_{\alpha} \cap L) \prod_j A(N^j,z).
$$
Now $A(V^H_{\alpha} \cap L,z)$ is a differential form on $V^H_{\alpha} \cap L$ which has the factor $(z^d)^{1/2}$, where $d = \sum m_jd_j$ and $d_j = \dim_{\C} N^j$.  The choice of sign is determined by {adapting} the argument of \cite{AH}, p.\ 21.  Namely, let $y \in V^H_{\alpha}$ and choose $(z^d)^{1/2}$ so that 
$$
2^{n_{\alpha}}(z^d)^{1/2} \prod_j (1 + z^{-m_j})^{d_j} \,\, = \,\, \text{trace}(z \, | \, E^+_y \oplus E^-_y)),
$$
where $L$ is the leaf through $y$, $2{n_{\alpha}} = \dim(V^H_{\alpha} \cap L)$, and $E^{\pm}$ are the complex vector bundles associated to the spin structure constructed from the irreducible spin representations $\Delta^{\pm}$.  Then for $z$ a topological generator of $H$, $\what{A}(V^H_{\alpha} \cap L,z)$ is the differential form given on the right hand side of the equation in Theorem \ref{basic2}  for the  {spin} Dirac complex $(E,d)$, so we have
$$
L_C(z; E,d)   \,\,=\,\,  \left<  \frac{ch_{\C}(i^{*}[\sigma(E,d)](z))}{ch_{\C}(\lambda_{-1}(N^{z}
\otimes{\C})(z))} \, \Td(F^{z} \otimes \C) , C \, |_{V^z} \right >   \,\,=\,\, 
\sum_{\alpha} \left< \int_{F_{\alpha} }  \what{A}(V^H_{\alpha} \cap L,z), C \, | \, _{V^H_{\alpha}}   \right> .
$$

Now consider the function {\em on the complex plane} given by
$$
A(F,z)   \,\,=\,\, 
\sum_{\alpha} \left<  \int_{F_{\alpha} }  \what{A}(V^H_{\alpha} \cap L,z),  C \, | \, _{V^H_{\alpha}}  \right> .
$$

Since  $\what{A}(V^H_{\alpha} \cap L,z)$ is a differential form whose coefficients are rational functions having poles only at roots of unity, the same is true of  $\dd \int_{F_{\alpha} }  \what{A}(V^H_{\alpha} \cap L,z)$, so also of $A(F,z) $.  Because of the factor $(z^d)^{1/2}$, $A(F,0) = 0$, and $A(F,z)$ has no pole at $z = \infty$.   
If $z \in H$ is not a root of unity, then it is a topological generator, and we have $L_C(z; E,d) = A(F,z)$.  Now  $L_C(z; E,d)$ is defined for all $z \in H$, and  by Proposition 
\ref{continuity} it is a continuous function on $H = \S^1$.  Thus $A(F,z)$ has no poles and so is analytic and bounded, and thus constant and hence zero. Therefore $L_C(z; E,d) =0$ for all $z \in \S^1$.  But $\dd L_C(1; E,d) =\left< \int_F \what{A}(TF), C\right>$.
\end{proof}

\begin{remark}
Note that any closed basic differential form yields a closed holonomy invariant current. So when the foliation is for instance Riemannian,  the subring of $H^*(V, \C)$ generated by the Pontryagin classes of the normal bundle (or any basic bundle) yields interesting vanishing results. 
\end{remark}

An interesting corollary is {an easy proof in the Riemannian case of the following well-known result for all spin foliations \cite{LMZ}. Notice that the proof given in \cite{LMZ} was completely different and based on the  techniques of sub-Dirac operators}. 

\begin{corollary}\label{LMZresult}
Assume that $F$ is a Riemannian spin foliation of a compact connected oriented manifold $V$, and assume that the $\what{A}$-genus of $V$ is non trivial, i.e. $\what{A} (V)[V] \not = 0$.  Then no compact connected Lie group can act non trivially on $V$ by  leaf-preserving diffeomorphisms.
\end{corollary}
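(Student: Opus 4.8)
The plan is to deduce this from Theorem \ref{rigidity}: it suffices to exhibit a single closed holonomy invariant current $C$ on $(V,F)$ with $\langle \int_F \what{A}(TF),[C]\rangle\neq 0$, and then to arrange that an arbitrary leaf-preserving action of a compact connected Lie group satisfies the hypotheses of that theorem. The current will be manufactured from the transverse geometry, using precisely that $F$ is Riemannian, and the nonvanishing of the pairing will be identified with the $\what{A}$-genus $\what{A}(V)[V]$.

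First I would produce the current. Since $F$ is Riemannian, fix a bundle-like metric; the transverse Levi-Civita (Bott) connection on the normal bundle $\nu$ has basic curvature, so the Chern--Weil representatives of its Pontryagin forms, and hence of $\what{A}(\nu)$, are closed basic forms. Let $\alpha$ be such a closed basic form representing $\what{A}(\nu)$, and let $C=C_\alpha$ be the associated closed holonomy invariant current $C_\alpha(\beta)=\int_T \beta\wedge\alpha|_T$, exactly as constructed in Section \ref{Review}. Because $\alpha$ is basic it is holonomy invariant and closed, so $C_\alpha$ is a legitimate closed Haefliger current.

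Next I would evaluate the pairing. Writing $\int_F\omega=\sum_i\int_{U_i}\phi_i\omega$ and using that $\alpha$ is basic (so it descends to the transversals and is constant along the plaques), a Fubini argument over each chart $U_i\to T_i$ gives
$$
\left\langle \int_F \what{A}(TF),\,[C_\alpha]\right\rangle=\int_T\left(\int_F \what{A}(TF)\right)\wedge\alpha|_T=\int_V \what{A}(TF)\wedge\alpha.
$$
Since the bundle-like metric splits $TV\cong TF\oplus\nu$ and $\what{A}$ is multiplicative, the form $\what{A}(TF)\wedge\alpha$ represents $\what{A}(TF)\what{A}(\nu)=\what{A}(TV)$, and integrating its top-degree component over $V$ yields $\what{A}(V)[V]$. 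By hypothesis this is nonzero, so $C_\alpha$ satisfies the hypothesis of Theorem \ref{rigidity}.

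Finally, let $H$ be a compact connected Lie group acting on $V$ by leaf-preserving diffeomorphisms. Averaging an arbitrary metric over $H$ against Haar measure makes $H$ act by isometries still preserving the leaves, while leaving $(V,F)$ and the current $C_\alpha$ untouched; and because $H$ is connected and $TF$ is spin, the induced action on the bundle of oriented orthonormal leafwise frames lifts canonically to the spin structure, so $H$ preserves the leafwise spin structure. Theorem \ref{rigidity}, applied with $C_\alpha$, then forces $H$ to act trivially. I expect the main obstacle to be the third step: justifying that $\what{A}(\nu)$ genuinely admits a closed basic representative (that the transverse Pontryagin forms of a Riemannian foliation are basic) and that the Fubini identity factoring $\int_V$ through $\int_F$ and $C_\alpha$ holds with the correct degree bookkeeping. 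The two reductions in the last step are standard but should be stated explicitly; everything else is formal once the pairing has been identified with the $\what{A}$-genus.
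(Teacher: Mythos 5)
Your proposal is correct and is essentially the paper's own proof: produce a closed basic form representing $\what{A}(\nu)$ (available since $F$ is Riemannian, with $\nu$ oriented because $V$ is oriented and $TF$ is spin), pair the associated closed holonomy invariant current against $\int_F \what{A}(TF)$ to obtain $\int_V \what{A}(TF)\,\what{A}(\nu) = \what{A}(V)[V] \neq 0$, and invoke Theorem \ref{rigidity}. The one inaccuracy is your claim that the action lifts \emph{canonically} to the leafwise spin structure because $H$ is connected: in general the lift exists only after replacing $H$ by a two-fold covering group (Proposition 2.1 of \cite{AH}), which is what the paper does, and which is harmless since the covering group acts trivially if and only if $H$ does.
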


\begin{proof}
 Since $V$ is oriented and the leaves are spin, the transverse bundle $\nu$ is also oriented. {{Since $F$ is  Riemannian, using the Levi-Civita connection on the normal bundle $\nu$ (constructed out of the Bott partial connection), the Pontrjagin classes of the normal bundle $\nu$ are}} represented in $H^*(V;\R)$ by basic closed forms. So, there is a basic closed form $\omega$ with $[\omega]=\what {A} (\nu)$ in $H^*(V;\R)$. If a compact connected Lie group acts non trivially on $V$ and preserves the leaves of the foliation $F$, then replacing this group by a two-fold cover Lie group we may assume that the action preserves the spin structure of the leaves.  See Proposition 2.1 of \cite{AH}.  By Theorem \ref{rigidity} we have,
$$
0   \; = \;   \left<  \int_F \what{A} (TF), C_\omega \right>    \; = \;    \left<  \int_F \what{A} (TF), C_\omega \right>  \; = \;     \int_V \what{A} (TF) \what{A} (\nu)\; = \;   \what{A} (V)[V]  \not = 0,
$$
a contradiction. \end{proof}

Now we prove Proposition \ref{continuity}.

\begin{proof}
Since  every element $h\in H$ acts as a holonomy diffeomorphism,  the multiplier $\Psi^E(h)$ of the algebra $C_c^\infty (\cG, E)$, recalled in Section \ref{Review}, is well defined. Recall that the higher Lefschetz number $L_C(h; E,d)$ of $h$ with respect to the elliptic $H$-invariant complex $(E,d)$ is given by evaluation at $h$ of the $H$-equivariant pairing between the cyclic cocycle $\tau_C$ associated with $C$ and the index class $\ind^H(E,d)\in K^H (C_c^\infty (\cG, E))$. But the equivariant pairing of $\ind^H(E, d)$ with the cyclic cocycle $\tau_C$ was defined in \cite{BH10} as follows. The class $\ind^H(E,d)$ is represented by some formal difference $[\te] - [\te ']$ with $\te=(e, \lambda)$ and $\te'=(e', \lambda')$ being $H$-invariant idempotents in $\widetilde{C_c^\infty (\cG, E)}\otimes \End (X)$  for some finite dimensional representation $X$ of $H$. So here 
$$
\widetilde{C_c^\infty (\cG, E)}\otimes \End (X) = (C_c^\infty (\cG, E)\oplus \C)\otimes \End (X) \simeq \left(C_c^\infty (\cG, E)\otimes \End (X)\right) \oplus \End (X).
$$
Now,  $e, e'\in C_c^\infty (\cG, E)\otimes \End (X)$ while $\lambda, \lambda'\in \End (X)$, satisfy that $\te$ and $\te '$ are  $H$-invariant and  the class $[\te] - [\te ']$ belongs to the kernel  $K^H(C_c^\infty (\cG, E))= \Ker\left(K^H(\widetilde{C_c^\infty (\cG, E))} \rightarrow R(H)\right)$.

\ms

The pairing between $\ind^H(E,d)$ and $\tau_C$, followed by evaluation at $h$,  is then given by the following formula \cite{BH10}
$$
L_C(h;E,d) = (\tau_C\sharp \tr) (\Psi^{E\otimes X} (h)\circ e, e, \cdots , e) - (\tau_C\sharp \tr) (\Psi^{E\otimes X} (h)\circ e', e', \cdots , e'),
$$
where the cyclic cocycle $\tau_C$ is defined in \cite{BH04} as follows using Connes' $X$-trick.
Consider the graded differential algebra $(M_2 (C_c^\infty (\cG, E\otimes \Lambda^\bullet \nu^*), *, \delta)$ with differential 
$$
\delta : M_2 (C_c^\infty (\cG, E\otimes \Lambda^\bullet \nu^*)) \longrightarrow M_2 (C_c^\infty (\cG, E \otimes \Lambda^{\bullet+1}\nu^*)),
$$ 
which is constructed out of the transverse connection and the partial curvature $\theta$, and satisfies $\delta^2=0$.
Now, the cyclic cocycle $\tau_C$ is given as usual by a foliated variation of Connes' formula:
$$
\tau_C (f_0, \cdots, f_{2k}) = \left<  \int_F \tr \left( [(\Psi^E(h)\circ f_0)*\delta f_1 * \cdots *\delta f_{2k}]|_V \right), C\right>.
$$
Since $e$ is smooth with compact support, it is clear that 
$$
(\Psi^E(h)\circ e)*\delta e * \cdots *\delta e,
$$
is a uniformly bounded smoothing operator along the leaves of $\cG$, with coefficents in differential forms on $V$. 
Moreover, it  is also  transversely smooth in the sense of \cite{BH11}[Definition 3.1]. Therefore,  all (transverse) derivatives of $(\Psi^E(h)\circ e)*\delta e * \cdots *\delta e$ yield uniformly bounded smoothing operators. This shows that  the map which assigns to $h$ the differential form on $V$ given by
$$
[(\Psi^E(h)\circ f_0)*\delta f_1 * \cdots *\delta f_{2k}]|_V,
$$
is continuous for the smooth (Fr\'echet) topology of the algebra of differential forms on $V$. Now, the closed holonomy invariant  current $C$ induces a continuous map $\dd C \circ  \int_F$ for this  Fr\'echet topology \cite{Hae}, so the proof is now complete.
\end{proof}

\subsection{Some rigidity Examples}\label{rigidguys}

We now give examples where Theorem \ref{rigidity} can be applied to conclude that there is no action, which are not already covered by {the results of \cite{HL91}}. 
A non-trivial example would be a foliation $F$ on $V$, with the $\what{A}$ genus of  $V$ being zero,  (so that is not an obstruction to an action), and  the $\what{A}$ genus of each leaf is zero (so is not an obstruction to a leafwise action), but there is a closed Haefliger current $C$ of positive degree so that $\dd\left<  \int_F \what{A}(TF), C\right> \neq 0$, and similarly for Corollary \ref{LMZresult} in the general non-Riemannian case considered in \cite{LMZ}.

\ms

Interesting examples already show up with foliations given by compact fiber bundles $F \to V \to N$ where the foliation is the tangent bundle along the fibers $TF$.  They satisfy  $\what{A}(V)[V] = 0$, $\what{A}(F)[F] = 0$, equivalently $\dd \int_F  \what{A}(TF) = 0$ in  $H^0(V/F) = H^0(N;\R)$, but $\dd \int_F  \what{A} (TF) \neq 0$ in $H^*(N;\R)$.  By Poincare duality, there is an element $[\omega] \in H^*(N;\R)$ so that $\dd\left<  \int_F \what{A}(TF), C_{\omega}\right> \neq 0$.

\subsubsection{{Example 1}} Recall Example 7.10 of \cite{BH23}, which is an adaption of Example 1 of \cite{H78}.
\begin{example}\label{SLexample}
Let $G= SL_2\R \times \cdots \times SL_2\R$ (q copies) and $K= SO_2  \times \cdots  \times SO_2$ (q copies). $G$ acts naturally on $\R^{2q}\ssm \{0\}$ and is well known to contain subgroups $\Gam$ with $N = \Gam \backslash G/K$ compact, (in fact a product of $q$ surfaces of higher genus).   Set 
$$
V = \Gam \backslash G \times _K ((\R^{2q} \ssm \{0\})/\Z)  \simeq\Gam \backslash G \times _K (\S^{2q-1} \times \S^1),
$$
 where $n \in \Z$ acts on $\R^{2q} \ssm \{0\}$ by $n\cdot z = e^n z$. 
 
 \ms
 
 $V$ has two transverse foliations, $F$ which is given by the fibers $\S^{2q-1} \times \S^1$ of the fibration $V \to N$, and a transverse foliation coming from the foliation $\tau$ of Example 1 of \cite{H78}.  $\tau$ is defined on the vector bundle $\Gam \backslash G \times _K \R^{2q}$, and the zero section is a leaf of it.  In addition, the action of $\Z$ preserves $\tau$, fixing the zero section, so it descends to a foliation on $V$, also denoted $\tau$.
\end{example}
  
We work with $F$, noting that $TF$ is orientable and spin since $\R^{2q} - \{0\}$ has these structures and the actions of $K$ and $\Z$  preserve them.  The following proposition is proven in the Appendix of \cite{BH23}.

\begin{prop}  \label{volFormOnN}
$\dd \int_F \what{A}(TF)$ is a nowhere zero $2q$ form on $N$.  In particular, there is a non-zero constant $C_q$ so that $\dd \int_N \int_F \what{A}(TF) = C_q \vol(N)$.
\end{prop}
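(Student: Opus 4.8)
The plan is to reduce the statement to the homogeneity of $N$ and then to a single local computation. First I would record that every ingredient of $\dd\int_F\what A(TF)$ is manufactured from $G$-equivariant data on $G/K$: the principal $K$-bundle $\Gam\backslash G\to N$, the associated bundle $E=\Gam\backslash G\times_K\R^{2q}$, and hence the vertical tangent bundle $TF=\Gam\backslash G\times_K T\big((\R^{2q}\ssm\{0\})/\Z\big)$ together with its invariant reductive connection, all descend from $G$-invariant objects on $G/K$. Consequently $\dd\int_F\what A(TF)$ is a \emph{closed, $G$-invariant} $2q$-form on the $2q$-dimensional manifold $N$. Since $G$ acts transitively on $G/K$, the space of invariant top-forms is one dimensional, spanned by the invariant volume form; therefore $\dd\int_F\what A(TF)=C_q\,\vol(N)$ for a single constant $C_q$, and the form is either identically $0$ or nowhere $0$ according to whether $C_q=0$ or $C_q\neq0$. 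Thus both the ``nowhere zero'' clause and the equality $\dd\int_N\int_F\what A(TF)=C_q\vol(N)$ follow at once, and the whole proposition collapses to the single claim $C_q\neq0$.

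Next I would compute $C_q$ by evaluating the fibre integral at one point via Chern--Weil theory. Writing $E=\bigoplus_{i=1}^q E_i$ as a sum of oriented rank-two (equivalently complex line) bundles, one for each $SO_2$-factor, the curvature of the invariant connection is block diagonal, its $i$-th block representing the Euler class $x_i=e(E_i)$. The essential geometric input is that each $x_i$ is a nonzero multiple of the area form pulled back from the $i$-th surface factor $\Sigma_i$, so that $\dd\int_{\Sigma_i}x_i\neq0$; this is the nonvanishing that must ultimately propagate to $C_q$. I would then write $\what A(TF)$ in terms of the curvature of the connection adapted to the fibration on $TF$, whose curvature couples the transverse blocks $x_i$ to the geometry of the Hopf fibre $\S^{2q-1}\times\S^1=(\R^{2q}\ssm\{0\})/\Z$.

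The heart of the argument is the fibre integration over this Hopf manifold. Using its splitting into the spherical ($\S^{2q-1}$) and radial/logarithmic ($\S^1$) directions, together with the tautological description of the vertical tangent bundle of a sphere bundle, I would transgress the fibre directions and isolate the unique term in the Chern--Weil representative of $\what A(TF)$ of maximal fibre degree. Integrating that term over $\S^{2q-1}\times\S^1$ should reduce the characteristic form to the top transverse monomial $x_1\cdots x_q$ (the only class of the correct bidegree), times an explicit universal coefficient coming from the $\what A$-series together with the volume of the fibre. Pairing with $[N]$ then yields $C_q=(\text{universal constant})\cdot\dd\int_N x_1\cdots x_q$, nonzero since each $\dd\int_{\Sigma_i}x_i\neq0$.

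The main obstacle is exactly this final step: producing the explicit fibre integral and certifying that the surviving coefficient does not vanish. Two points demand care. First, one must use the connection \emph{adapted to the foliation} (flat along the affine Hopf leaves, metric in the transverse directions) rather than an arbitrary one, so that the fibre-integrated form is the correct representative and the spherical integration does not annihilate the contribution prematurely; this is where the construction genuinely differs from a naive product computation. Second, one must track the universal constant from the $\what A$-genus precisely enough to guarantee $C_q\neq0$ and to exhibit its dependence on $q$, which is a finite but delicate bookkeeping of the relevant symmetric-function and Bernoulli coefficients. Once that coefficient is shown to be nonzero, the homogeneity reduction of the first paragraph completes the proof.
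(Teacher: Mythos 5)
The paper itself contains no proof of this proposition: it is imported verbatim, with the sentence ``The following proposition is proven in the Appendix of \cite{BH23}'', so your attempt can only be checked against the statement and the construction of Example \ref{SLexample}. Your first paragraph is fine as far as it goes: all the ingredients descend from $G$-invariant data on $G/K$, so if $\dd\int_F \what{A}(TF)$ is computed with the invariant connection it lifts to a $G$-invariant $2q$-form on $G/K=\H^q$, the space of such top forms is one dimensional, and the whole proposition collapses to $C_q\neq 0$. That reduction is soft, however; the constant is the entire content, and this is exactly where your proposal is not merely incomplete but, as described, would fail.

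Your hoped-for outcome --- that fiber integration over $\S^{2q-1}\times\S^1$ turns the $\what{A}$-form into a nonzero universal multiple of the monomial $x_1\cdots x_q=e(E)$, where $E=\Gam\backslash G\times_K\R^{2q}$ --- is cohomologically impossible. In your own model $TF=\Gam\backslash G\times_K T\bigl((\R^{2q}\ssm\{0\})/\Z\bigr)$, the deck transformation $n$ acts on tangent vectors by the positive scalar $e^n$, while $K=SO_2\times\cdots\times SO_2\subset SO_{2q}$ preserves the Euclidean norm; hence $v\mapsto v/|z|$ is a $K$-equivariant, $\Z$-invariant trivialization of the twist, giving a genuine isomorphism $TF\cong\pi^*E$. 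Consequently $[\what{A}(TF)]=\pi^*[\what{A}(E)]$, and the fiber integral of a pullback class is exact; moreover $E\cong (G/K)\times_\Gam\R^{2q}$ is flat (equivalently, each $p_1$ of the rank-two blocks is $x_i^2=0$), so $[\what{A}(E)]=1$ in any case. Thus $\dd\int_F\what{A}(TF)$ is a closed exact form, and combined with your own invariance reduction this forces $C_q\,\vol(N)=\int_V\what{A}(TF)=0$: carried out correctly, your computation certifies $C_q=0$, the opposite of the claim. (For $q=1$ this is transparent: $V\cong\Gam\backslash SL_2\R\times\S^1$ is parallelizable.) The underlying confusion is between the Euler class, which is unstable and is not a polynomial in Pontryagin forms, and the $\what{A}$-polynomial, which is stable; no choice of ``adapted'' connection can change the de Rham class, so your caveat about choosing the connection flat along the leaves cannot rescue the step. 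Whatever argument the Appendix of \cite{BH23} actually runs, it must rest on input invisible in your description (and not reproduced in this paper); the Chern--Weil fiber integration you outline cannot produce the nonvanishing, and the proof cannot be discharged along the lines you propose.
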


Note that the $\what{A}$ genus of each leaf  $\what{A}(\S^{2q-1} \times \S^1)[(\S^{2q-1} \times \S^1] = 0$, as we wish.  However, 
$$
\what A (V)[V]  =   \int_N \int_F \what{A}(TF)  \what{A}(TN) =   \int_N \int_F \what{A}(TF)  \not = 0,
$$
since $N$ is a product of surfaces, so $\what{A}(TN) = 1$. To overcome this, we take the cross product with $\S^1$and its  zero dimensional foliation $F_0$, that is, set
$$
V_1 = V \times \S^1,  \;\;  N_1 = N \times \S^1,  \;\;  F_1 = F \times F_0 \;\;  \text{and} \;\;  C = d\vol_{\S^1}.
$$
Then this satisfies all our requirements, in particular, the $\what{A}$ genus of each leaf  of $F_1$ is zero, 
$$
\what{A} (V_1)[V_1]  =0, \;\;  \text{and} \;\;  \what{A}(TF_1) = \what{A}(TF),  \;\;  \text{so} 
 \;\;\left< \int_{F_1} \what{A}(TF_1), C\right>  \; = \;  \int_{\S^1}  d\vol_{\S^1}\int_N \int_F \what{A}(TF) 
 \neq 0.
$$
Finally, note that while each leaf does admit actions, their agglomeration does not. 

\begin{remark}\
For further examples, note that the calculations in the examples in \cite{H78} can be used  to provide examples associated to the groups  $G= SL_{2n_1}\R \times \cdots \times SL_{2n_r}\R$, and $K= SO_{2n_1}  \times \cdots  \times SO_{2n_r}$, and $G= SL_{2n_1}\R \times \cdots \times SL_{2n_r}\R \times \R$ and $K= SO_{2n_1}  \times \cdots  \times SO_{2n_r} \times \Z$.  
\end{remark}

\begin{remark}
Note that $\S^1$ can be replaced by any compact orientable manifold $X$ with dimension not congruent to $4$ mod $n$,  with $d\vol_{\S^1}$ replaced by $d\vol_X$.
\end{remark}

\subsubsection{{Example 2}} We get a second example of this type by using the results of Atiyah in \cite{A69}.  In \cite{CHS}, it was shown that the signature class is multiplicative for a fiber bundle with fiber $Y$ and base space $X$, which are compact oriented manifolds,  provided the fundamental group of $X$ acts trivially on the cohomology of $Y$.  Atiyah constructed examples that show this restriction on the action of $\pi_1(X)$ is necessary.   

\ms

In particular, Atiyah constructed  a closed oriented $4$-manifold $Z$  which fibers over a higher genus surface $X$ with fibers a higher genus surface $Y$.   Denote by $TF$ the tangent bundle to the fibers, which is a spin foliation, denoted $F$.  It has the property that its first Chern class $d = c_1(TF)$ satisfies $d^2 \neq 0$ in $H^4(Z,\R)$.  In addition, the total Pontrjagin class of $Z$ is 
 $p(Z) = 1 + d^2$.  Note also that there are spin structures on  $Z$ as well as on $X$ and on the fibers $Y$ over $X$. 
 
\ms

For dimensional reasons,  $\what{A}(Y)[Y] = \what{A}(X)[X] = 0$.
However, since $p(Z) = 1 + d^2$, $\what{A}(Z)[Z] \neq 0$, just as above.  To handle this we again take a product with $\S^1$ with its point foliation $F_0$.  That is, we consider 
$$
Z_1 = Z \times \S^1,  \;\;  X_1 = X \times \S^1,  \;\;  F_1 = F \times F_0 \;\;  \text{and} \;\;  C = d\vol_{\S^1}.
$$
This satisfies all our requirements, in particular, the $\what{A}$ genus of each leaf  of $F_1$ is zero, 
$$
\what{A} (Z_1)[Z_1]  =0, \;\;  \text{and} \;\;  \what{A}(TF_1) = \what{A}(TF),  \;\;  \text{so} 
 \;\;\left< \int_{F_1} \what{A}(TF_1), C\right>  \; = \;  \int_{\S^1}  d\vol_{\S^1}\int_N \int_F \what{A}(TF) 
 \neq 0.
$$

\subsubsection{{Example 3}}

We now give examples using homogeneous spaces.
Recall that {the} complex projective space $\C P_{q}$ is orientable for all $q$, and  is spin if and only if $q$ is odd.  
In addition, 
$$
{\oH^*(\C P_{q}; \Z) = Z[\alpha] / (\alpha^{q+1} = 0), \text{ where }\alpha \in \oH^{2}(\C P_{q}; \Z).}
$$   
It is classical that  the total Pontrjagin class of $\C P_{q}$ is 
$$
p(\C P_{q}) \,\, = \,\, (1 + \alpha^2)^{q+1}, \;\; \text{ so } \;\; 
\what{A}(T\C P_{q}) \,\, = \,\, \Bigl{(}\frac{\alpha/2}{\sinh(\alpha/2)}\Bigr{)}^{q+1},
$$
 and $\dd \what{A}(\C P_{q}) = \int_{\C P_{q}}  \what{A}(T\C P_{q})$, the $\what{A}$-genus of $\C P_{q}$, is the coefficient of $\alpha^{q}$ in this series.  Thus, to get $\what{A}(\C P_{q})$, we need to compute
 $$
\frac{1}{2\pi i} \oint \frac{1}{z^{q+1}}\Bigl{(} \frac{z/2}{\sinh(z/2)}\Bigr{)}^{q+1}  \, dz  \,\, = \,\,
\frac{1}{2\pi i} \oint \frac{1}{2^{q+1}} \frac{1}{(\sinh(z/2))^{q+1}}  \, dz  \,\, = \,\,
2^{-q}\oint     \frac{ (2\pi i)^{-1}  du}{u^{q+1} \sqrt{1+u^2}},
$$
using $u = \sinh(z/2)$.
The integral gives the coefficient of $u^q$ in the Taylor series of $1/\sqrt{1+u^2}$, which equals $q+2$ times the coefficient  of $u^{q+2}$ in the Taylor series of $\sqrt{1+u^2}$.  To see this note that the derivative of $\sqrt{1+u}$  is $1/(2\sqrt{1+u})$, and then substitute $u^2$ for $u$.  We claim that this is non-zero if and only if $q$ is even.  

\ms

Given the claim,  then $\what{A}(\C P_q) =\what{A}(U_{q+1}/(U_1 \times U_q) \neq 0$ if and only if $q$ is even.  Thus we set $ F = \C P_{q+1}$, which is spin with $\what{A}$ genus zero.    Next, consider the universal $\C P_{q+1}$ bundle, 
$$
F = \C P_{q+1} \to EU_{2q+1}/( U_{1} \times U_{2q})\to BU_{2q+1}.
$$
It satisfies $\dd\int_F \what{A}(TF)$ has at least one non-zero term in dimension greater than zero. So there is a compact manifold $N$ and a principal bundle $F = U_{2q+1} \to V \to N$ so that $\dd \int_F \what{A}(TF)$ has at least one non-zero term in dimension greater than zero.  By Poincare duality, there is an element $[\omega] \in H^*(N;\R)$ so that $\dd\left< \int_F \what{A}(TF), C_{\omega}\right> \neq 0$.   If the $\what{A}$ genus of $V$ is not zero, we may take the product with $\S^1$ as above to kill it.   

\ms

The  proof of the claim {is standard and proceeds for instance by induction.}
 We need to show that the Taylor series of $f = \sqrt{1+u^2}$ is an even (which is obvious)  series with non-zero coefficients for all even powers of $u$.   Easy calculations get the induction  started.   So, write $f(u) = \sum_{n=0}^\infty a_n u^{2n}$ and note that $f^2 = 1+u^2$.  Thus we have that for $n \geq 2$
$$
2a_n + \sum_{j=1}^{n-1} a_j a_{n-j} = 0.
$$
The induction hypothesis is that for $j < n$, $a_j = (-1)^{j+1} |a_j| \neq 0$.  Then 
$$
2a_n  \,\, = \,\, - \sum_{j=1}^{n-1} a_j a_{n-j}    \,\, = \,\, 
- \sum_{j=1}^{n-1} (-1)^{j+1}  (-1)^{n-j+1}|a_j| \, | a_{n-j}|
 \,\, = \,\, 
(-1)^{n+1}  \sum_{j=1}^{n-1} |a_j| \, | a_{n-j}| \neq 0 .
$$
Thus $a_n \neq 0$ for all $n$.  Note that the proof also shows that the denominator of $a_n$ is a power of $2$, as required by Theorem 25.4 of \cite{BoH59}.

\subsubsection{{Example 4}} Similar examples can be constructed following Chapter V, Section 15 of \cite{BoH58}.  In particular, denote by $Sp_q$ the space of unitary quaternionic $q \times q$ matrices, and set $\K P_{q-1}  = Sp_q/(Sp_1 \times Sp_{q-1})$.   
Recall that $\oH^*(\K P_{q-1}; \Z) = Z[\alpha] / \alpha^q = 0$, where $\alpha \in \oH^{4}(\K P_{q-1}; \Z)$.   Thus $w_1(\K P_{q-1}) = w_2(\K P_{q-1}) = 0$, so $\K P_{q-1}$ is orientable and spin.
From \cite{BoH59}, p.\ 519, we have that the total Pontrjagin class of $\K P_{q-1}$ is
$$
p(\K P_{q-1}) \,\, = \,\, (1 + 4 \alpha)^{-1}(1 + \alpha)^{2q}, \;\; \text{so} \;\; 
\what{A}(T\K P_{q-1}) \,\, = \,\, \frac{\sinh(\sqrt{\alpha})}{\sqrt{\alpha}} \Bigl{(}\frac{\sqrt{\alpha}/2}{\sinh(\sqrt{\alpha}/2)}\Bigr{)}^{2q} ,
$$
and $\what{A}(\K P_{q-1})$, the $\what{A}$-genus of $\K P_{q-1}$, is the coefficient of $\alpha^{q-1}$ of this series.  If we set $z = \sqrt{\alpha}$, then we need to compute 
$$
\frac{1}{2\pi i} \oint \frac{1}{z^{2q-1}}\frac{\sinh(z)}{z}\Bigl{(} \frac{z/2}{\sinh(z/2)}\Bigr{)}^{2q}  \, dz
 \,\, = \,\,  \frac{1}{2\pi i} \oint \frac{1}{2^{2q}} \frac{\sinh(z)}{(\sinh(z/2))^{2q}} \, dz  \,\, = \,\,
$$
$$
 \frac{1}{2\pi i} \oint \frac{1}{2^{2q}} \frac{2\sinh(z/2)\cosh(z/2))}{(\sinh(z/2))^{2q}} \, dz  \,\, = \,\,
 \frac{1}{2\pi i} \oint \frac{1}{2^{2q-2}} \frac{du}{u^{2q-1}} \, dz  \,\, = \,\, 0,
$$
since $ q > 1$.  Thus the 
$$
\what{A}(\K P_{q-1}) \,\, = \,\, \what{A}(Sp_q/(Sp_1 \times Sp_{q-1}) \,\, = \,\, 0.
$$
We leave it to the reader to continue as above using results of  \cite{BoH58}.  

\subsection{Other rigidity results}

As noted in \cite{HL91}, the results of \cite{BT}, originally conjectured by Witten, extend to foliated manifolds in much the same way that the result of \cite{AH} for the $\what{A}$ genus was extended here.  In particular, the analog of the Bott-Taubes theorem is as follows.

\ms

Denote by $S^k(F)$ and $\Lam^k(F)$ the symmetric and exterior powers of $TF\otimes \C$, and set 
$$
S_a(F) = \sum_{k=0}^{\infty} a^kS^k(F)  \;\; \text{ and } \;\;  \Lam_a(F) = \sum_{k=0}^{\infty} a^k\Lam^k(F).  
 $$
Denote by $\Delta^+$ and $\Delta^-$ the spin representations of $\Spin(2p)$.  Then the signature operator $d_S = \Dir \otimes \Delta_1$ where $\Dir$ is the Dirac operator on a spin manifold of dimension $2p$ and $\Delta_1 = \Delta^+ + \Delta^-$.  An elliptic operator $D$ which is preserved by an $\S^1$ action on a manifold is said to be universally rigid if the induced action of $\S^1$ on  $\ind(D)= \ker(D) -\coker(D)$ is the identity map.

\begin{theorem}Let $F$ be a $2p$ dimensional spin foliation of $V$ with the leafwise Dirac operator $\Dir$, which satisfies the hypothesis of Theorem \ref{basic}, with $H = \S^1$.  Let $R_n$ and $R'_n$ be the defined by the sequences 
$$
R_q \:= \sum_{n=0}^{\infty} q^n R_n = \bigotimes_{n=1}^{\infty} \Lam_{q^n}(F) \bigotimes_{m=1}^{\infty} S_{q^m}(F)
 \;\; \text{ and } \;\; 
 R'_q \:= \sum_{n=0}^{\infty} q^{n/2} R'_n =  \hspace{-0.47cm} \bigotimes_{n=1/2,3/2,...}^{\infty} \hspace{-0.47cm}\Lam_{q^n}(F) \bigotimes_{m=1}^{\infty} S_{q^m}(F). 
$$
Then the leafwise operators $ =\Dir \otimes R'_n$ and $d_S \otimes R_n$ 
acting on sections of the bundles denoted $E'$ and $E$,  respectively,
are universally rigid.  In particular, for all $h \in \S^1$,  see Theorem 5.3 \cite{BH10},  
$$
L_C(h;E', \Dir \otimes R'_n) = L_C(\Id;E', \Dir \otimes R'_n) =
\left< \ch(\sigma(E',\Dir \otimes R'_n))\Td(TF\otimes \C), C \right > = 
\left< \what{A}(TF)\ch(R'_n), C  \right>,
$$
and similarly
$$
L_C(h;E, d_S\otimes R_n)  = \left< L(TF)\ch(R_n), C \right >.
$$
\end{theorem}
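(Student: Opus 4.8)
\emph{Approach.} The plan is to mirror, at the level of the full Witten $q$-series, the Liouville argument used in the proof of Theorem \ref{rigidity}, replacing the bare $\what{A}$-contribution by the theta-function contributions of the elliptic genera and importing the modular transformation laws of \cite{BT}. Throughout $H=\S^1$, and exactly as in the proof of Theorem \ref{rigidity} we first reduce to this case and, after possibly passing to a two-fold cover, arrange that the $\S^1$-action lifts to the leafwise spin structure, so that the equivariant leafwise operators $\Dir\otimes R'_n$ and $d_S\otimes R_n$ are defined; since the action is non-trivial and $H$ is connected, the fixed-point submanifold $V^H$ is transverse to $F$ with $V\neq V^H$.

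\emph{Step 1: the coefficientwise fixed-point formula.} Each $R_n$ and $R'_n$ is an honest finite-dimensional $H$-equivariant bundle assembled from the $S^k(F)$ and $\Lam^k(F)$, so $d_S\otimes R_n$ and $\Dir\otimes R'_n$ are leafwise elliptic and $H$-equivariant and Proposition \ref{continuity} applies to each. First I would apply Theorem \ref{basic2} (in the fixed-point form of Theorem \ref{basic3}) to express, for a topological generator $z\in\S^1$, the number $L_C(z;E,d_S\otimes R_n)$ as a sum over the connected components $V^H_\alpha$ of $V^H$ of the classical signature elliptic-genus contributions, each a closed characteristic form on $V^H_\alpha\cap L$ in the Chern roots of $TF^h$ and of the normal eigenbundles $N^j$ (on which $z$ acts by $z^{m_j}$), integrated over the leaves of $F_\alpha$ and paired with $C|_{V^H_\alpha}$; likewise for $\Dir\otimes R'_n$ with the Dirac elliptic-genus contributions.

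\emph{Step 2: assemble the series, use modularity, and run Liouville.} I would then weight these coefficientwise identities by $q^n$, respectively $q^{n/2}$, to form the generating functions $\Phi(z,q)=L_C(z;E,d_S\otimes R_q)$ and $\Phi'(z,q)=L_C(z;E,\Dir\otimes R'_q)$. Because the integrands are genuine closed characteristic forms in the Chern roots on the transverse fixed submanifold, the theta-function identities underlying \cite{BT} hold \emph{pointwise on the transversal} and survive the continuous pairing $C\circ\int_F$; hence, for fixed $q$, $\Phi$ and $\Phi'$ are meromorphic in $z$ with the only possible poles on $\S^1$ at roots of unity, and they satisfy the transformation law under $z\mapsto qz$ inherited from the Bott-Taubes identities. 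By Proposition \ref{continuity} the maps $z\mapsto L_C(z;\cdot)$ are continuous on $\S^1$, so exactly as in Theorem \ref{rigidity} there are in fact no poles at the roots of unity; combined with the transformation law this forces $\Phi,\Phi'$ to be holomorphic and bounded in $z$, hence independent of $z$. Extracting the coefficient of $q^n$ (resp. $q^{n/2}$) gives the universal rigidity of $d_S\otimes R_n$ and $\Dir\otimes R'_n$, i.e. $L_C(z;\cdot)=L_C(\Id;\cdot)$ for every $z\in\S^1$. The hard part will be precisely this step: transporting the modular/Jacobi-form machinery of \cite{BT} into the foliated setting so as to secure the boundedness in $z$ needed for the Liouville conclusion, since an individual coefficient $R_n$ does not manifestly decay and it is only the assembled series whose modularity supplies the estimate. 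What makes this tractable, as in Theorem \ref{rigidity}, is that the transformation laws are classical and fiberwise, and the only genuinely foliated input is the continuity from Proposition \ref{continuity} that kills the poles on $\S^1$.

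\emph{Step 3: evaluation at the identity.} Finally I would evaluate at $z=\Id$, where the fixed-point set is all of $V$, so $V^h=V$, $N^h=0$ and $\lambda_{-1}(N^h\otimes\C)(\Id)=1$; Theorem \ref{basic2} then collapses to the pairing of the leafwise index density with $C$. For the twisted Dirac operator this density is $\what{A}(TF)\ch(R'_n)$, and for the twisted signature operator, recalling $d_S=\Dir\otimes\Delta_1$ so that $\ch_\C(\sigma(d_S))\Td(TF\otimes\C)=L(TF)$, it is $L(TF)\ch(R_n)$, which yields the two displayed formulas and completes the proof.
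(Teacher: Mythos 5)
Your proposal follows exactly the route the paper intends: the paper states this theorem without giving a detailed proof, remarking only that the results of \cite{BT} ``extend to foliated manifolds in much the same way'' as the Atiyah--Hirzebruch argument was extended in Theorem \ref{rigidity}, i.e.\ by combining the higher Lefschetz fixed point formula (Theorems \ref{basic2} and \ref{basic3}) applied coefficientwise to the finite-dimensional $H$-equivariant bundles $R_n$, $R'_n$, the continuity of $h\mapsto L_C(h;E,d)$ from Proposition \ref{continuity} (which replaces the finiteness of the equivariant index character in ruling out poles on $\S^1$), the modular transformation laws of the assembled $q$-series to control poles off the circle, and finally evaluation at $\Id$ via the higher index theorem (Theorem 5.3 of \cite{BH10}), yielding $\what{A}(TF)\ch(R'_n)$ and $L(TF)\ch(R_n)$. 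Your outline reproduces precisely this argument, and correctly flags that the only genuinely foliated inputs are Proposition \ref{continuity} and the fact that the Bott--Taubes identities are pointwise identities of characteristic forms on the transverse fixed-point foliation that survive the continuous pairing $C\circ\int_{F^h}$.
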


\section{Spectral triples and integrality}\label{spec&Integrality}

In this section we explain the construction of transverse spectral triples as proposed by Connes \cite{C86}, and their index theory as  developped by Connes and Moscovici in \cite{CM95}, with the consequences for the higher Lefschetz formulae for foliations. 

\subsection{Transverse spectral triples} 

We shall assume for simplicity that $F$ is a Riemannian foliation, and so can employ the more precise results obtained by Kordyukov in \cite{Kordyukov, Ko09}. For other interesting explicit results on Lefschetz formulae for foliations, see \cite{Ko15}.  We may and will assume that the metric on $V$ is  bundle like, so its restriction to the transverse bundle $\nu=TV/TF$  is holonomy invariant.   

\ms

We shall be using some elements of the basic cohomology  $\oH^{*}_{bas}(V,F)$.  
In particular, let ${\what{E}}$ be a  basic  Hermitian bundle on $V$, which means that there exists a connection $\nabla^{\what{E}}$ on ${\what{E}}$ which is locally projectable, so its curvature $\Ome^{\what{E}}$ is basic.   {Examples of such a bundle are provided by   bundles {which are} functorially constructed out of the transverse bundle $\nu=TV/TF$.  Then all the characteristic classes of $\what{E}$ can be represented by closed basic forms, i.e.  the Chern classes $c_j({\what{E}})$ in the complex case and the Pontrjagin classes $p_j({\what{E}})$ in the real oriented case, all live in $H^{ev}_{bas} (V, F)$.
In fact, $\oH^*_{bas}(V,F)$ contains the ring generated by the Pontrjagin classes $p_j(\nu)$ of $\nu$ and the Chern classes $c_j({\what{E}})$ of all basic Hermitian bundles.}
We will be particularly interested in the Chern character 
$\ch(\what{E}) = [\tr(\exp(\Ome^{\what{E}} /2\pi i))]$ of {the basic hermitian bundle ${\what{E}}$, as well as in  the $\what{A}$ genus $\what{A}(\nu)$,  of the transverse bundle $\nu$.  Recall that the latter is given as follows.}
Denote by $R^\nu$ the curvature of the Levi-Civita connection {$\nabla^{\nu}$} on $\nu${, which is constructed using the Bott connection and is locally the pull-back of the Levi-Civita connection on a transversal}. Then $\what{A}(\nu)$  is the basic cohomology class {represented} by the {closed} basic form $\det  \left(\frac{R^\nu/4\pi}{\sinh(R^\nu/4\pi)}\right)^{1/2}$, i.e.
$$
\what{A} (R^\nu) := \left[\det  \left(\frac{R^\nu/4\pi}{\sinh(R^\nu/4\pi)}\right)^{1/2}\right].
$$

\ms

We now recall the notion of a finite dimensional spectral triple, and  the transverse spectral triples associated with   Riemannian foliations as described and studied in \cite{Kordyukov}. A  (finite dimensional) spectral triple is composed of an (involutive) algebra $\maA$ which is  represented in a separable Hilbert space $\maH$, so $\pi:\maA\to \maL(\maH)$ is an involutive morphism of algebras, and of a (``first order'' unbounded) self-adjoint operator $D$ which interacts properly with the representation. More precisely,  it is assumed that:
\begin{itemize}
\item $D: dom (D)_{\subset \maH}\rightarrow \maH$ is a (densely defined) self-adjoint  operator on $\maH$;
\item For any $a\in \maA$, the operator $\pi (a)$ preserves  $dom (D)$ and the commutator $[D, \pi (a)]$ extends to a bounded operator on $\maH$;
\item (finite dimensionality) There exists $\alpha\in [1, +\infty)$ such that for any $a\in \maA$, the operator $\pi (a) (D^2+I)^{-\alpha/2}$ belongs to the Dixmier ideal $L^{\alpha,\infty} (\maH)$.
\end{itemize}
The least  real number $\alpha$   satisfying the third item, denoted $d$,  is called  the dimension of the spectral triple. So for any complex number $z$ such that $\Re (z) >d$, the operator $\pi (a) (D^2+I)^{-z/2}$ is a trace class operator. In fact, there is a more accurate notion of dimension spectrum for spectral  triples which was introduced in \cite{CM95}. More precisely, we assume furthermore that there exists a discrete subset $\Gamma$  of the complex plane, the dimension spectrum, whose projection to the reals is also discrete, and such that for any $a$  in the  algebra generated by $\maA$ and its commutators with $D$, the holomorphic function  $z\mapsto \Tr\left(\pi (a) (D^2+I)^{-z/2}\right)$ on $\{\Re(z)>d\}$ extends to a meromorphic function on the complex plane whose poles are all in $\Gamma$. The dimension spectrum $\Gamma$ is a simple dimension spectrum when all its elements are at most simple poles.  This reflects roughly the usual notion for closed smooth manifolds with many connected components of different dimensions.  In that case $\Gamma\subset \{n\in \N, n\leq d\}$ where $d$ is the maximal dimension of the connected components, and it is a simple dimension spectrum. When the Hilbert space $\maH$ is moreover endowed with a $\Z_2$-grading, say an involution $\gamma$, which commutes with the representation $\pi$ and anticommutes with $D$, such a spectral triple is called an even spectral triple. See again \cite{C94}  for more details on spectral triples and their associated Fredholm modules and index pairings. 

\ms

The main example of a spectral triple we consider here is the transverse spectral triple associated with the Riemannian foliation $(V, F)$ in \cite{Kordyukov}.  More precisely, take for $\maA$ the smooth convolution algebra $\maA=C_c^\infty (\maG)$ associated with the holonomy groupoid $\maG$. The convolution product is given by
$$
(k_1 k_2) (\gamma) := \int_{\maG^{r(\gamma)}} k_1 (\alpha) k_2 (\alpha^{-1}\gamma)\,  d\eta^{r(\gamma)} (\alpha), \quad k_1, k_2\in \maA\text{ and } \gamma\in \maG,
$$
where $d\eta = d\eta^x$ is the {$\maG$-invariant} Haar system on $s:\maG \to V$, which is defined as the pull-back under the covering map $s$ of the Lebesgue class measure on the leaves of $F$ associated with the fixed metric \cite{C79}. The  involution is as usual defined as $
k^* (\gamma) := \overline{k(\gamma^{-1})}$ for $k\in \maA$ and $\gamma\in \maG.$

\ms

If $E$ is any $\maG$-equivariant (always assumed basic here) hermitian bundle over $V$ (with basic connection $\nabla^E$), then the Hilbert space $L^2 (V, E)$ is endowed with the involutive average representation $\pi_E$ given by
$$
\pi_{E} (k)(\xi) (x)  := \int_{\maG^x} k(\gamma) h_{\gamma} (\xi (s(\gamma))) \, d\eta^x(\gamma), \quad \text{for }k\in \maA, \xi \in \maH\text{ and } x\in V.
$$
Here $h_\gamma: E_{s(\gamma)} \to E_{r(\gamma)}$ is the holonomy action on $E$, corresponding to the element $\gamma\in \maG$.

\ms

Then according to \cite{Kordyukov}, any first order transversally elliptic self-adjoint (pseudo)differential operator acting on the smooth sections of $E$, whose square has a scalar principal symbol allows us to define a spectral triple.   A typical example is a transverse Dirac-type operator, for instance the transverse signature operator when the Riemannian foliation is transversely oriented. More precisely, using the fixed metric on $V$, we consider the orthogonal supplementary bundle of $TF$  as a realization of the transverse bundle $\nu$. This allows us to define  a transverse de Rham differential $d_\nu$ and also its adjoint (over $V$) which is $d_\nu^*$, so that the operator $D=d_\nu+d_\nu^*$ is a transversally elliptic operator whose square has a scalar principal symbol given by  the metric. Together with the   $\Z_2$-grading  of $\Lambda^*_\C \nu^*$ associated with the metric and the fixed transverse  orientation, one obtains  an even spectral triple. This example was expanded in \cite{Kordyukov}.

\ms

We shall assume for simplicity that the even transverse bundle $\nu$ has a holonomy invariant spin structure. This is not necessary but it will simplify some computations of the fixed point formulae below. Then the above transverse signature operator can be recast as a twisted transverse Dirac operator. More generally, we may then consider all the  twisted  transverse Dirac operators obtained using basic bundles.  Fix again  the basic hermitian bundle $\what{E}$ with its basic connection $\nabla^{\what{E}}$ and denote by $\maS=\maS^+\oplus \maS^-$ the spin {super}bundle associated with the normal bundle $\nu$. The Hilbert space is then the space of $L^2$-sections of the basic bundle $E=\maS\otimes {\what{E}}$ over $V$, that is $\maH=L^2 (V, \maS\otimes {\what{E}})$ with its $\Z_2$-grading inherited from the $\Z_2$-grading of $\maS$. The involutive representation  $\pi_{\maS\otimes \what{E}}$ of $\maA$ is then given by the same formula as above and will rather  be denoted by $\pi_{\what{E}}$ once the spin structure has been fixed. 
Next,  consider,  as for the signature operator, the self-adjoint transverse spin-Dirac operator $D=\Dir^{\what{E}}$ with coefficients in $\what{E}$, see \cite{LM89} and also \cite{BH18} for the foliation case. 
Note that the bundle $\maS$ inherits from $\nabla^\nu$ a spin connection $\nabla^{\maS}$ which respects the $\Z_2$-grading. The $\Z_2$-graded connection on $\maS\otimes \what{E}$ which is the tensor product connection of $\nabla^{\maS}$ with the basic connection $\nabla^{\what{E}}$ on ${\what{E}}$, will be denoted $\nabla$. Choose a local orthonormal basis $f_1,...,f_q$   of $\nu^*$ with dual orthonormal basis  $e_1,...,e_q$ of $\nu$.  For $u \in C^\infty (V, \maS\otimes\what{E})$, set 
$$
\Dir_0^{\what{E}}(u) \,\, = \,\, \sum_{1\leq i\leq q} f_i \cdot \nabla_{e_i}u,
$$
where $f_i \cdot$  is the operator $c(f_i)\otimes \id_{\what{E}}$, that is Clifford multiplication by $f_i$.  In general, $\Dir_0^{\what{E}}$ is not self-adjoint.  The mean curvature vector field of $F$   is 
$\mu= \sum_{j=1}^p p_{\nu}(\nabla^{TV}_{X_j} X_j)$ where $X_1,...,X_p$ is a local orthonormal framing of $TF$, $\nabla^{TV}$ is the Levi-Civita connection on $V$, and $p_{\nu}:TV \to \nu$ is the projection.  When we think of $\mu$ as a covector (the isomorphism $\nu \simeq \nu^*$ being given by the inner product), then we denote Clifford multiplication by it acting on $\maS\otimes \what{E}$ by $\mu\cdot = c(\mu)\otimes \id_{\what{E}}$, and it is given by
$$
\mu\cdot \,\, = \,\,  \sum_{j=1}^p  \sum_{i=1}^q \langle  [e_i,X_j],X_j \rangle f_i\cdot
$$
The transverse Dirac operator $\Dir^{\what{E}}$ associated to $F$ is now the self-adjoint operator
$$
\Dir^{\what{E}} \,\, = \,\,  \Dir_0^{\what{E}} \,\, - \,\, \frac{1}{2}\mu\cdot 
$$
It is easy to check that $\Dir^{\what{E}}$ anticommutes with the $\Z_2$-grading.  See \cite{Kordyukov, GlK91}. The following is proven in \cite{Kordyukov}.

\begin{theorem}\cite{Kordyukov}\
The triple $(\maA, \maH, \Dir^{\what{E}})$ is an even spectral triple with dimension equal to $q=\codim (F)$ and with simple dimension spectrum contained in $\{m\in \N, m\leq q\}$.\end{theorem}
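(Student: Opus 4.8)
The plan is to verify, in order, the three defining axioms of a finite dimensional spectral triple for $(\maA,\maH,\Dir^{\what E})$ with $\maA=C_c^\infty(\maG)$ and $\maH=L^2(V,\maS\otimes\what E)$, and then to sharpen the finite-dimensionality statement into the claim of a simple dimension spectrum contained in $\{m\in\N,\ m\le q\}$. Self-adjointness of $\Dir^{\what E}$ is built into its construction: the mean-curvature correction $-\frac{1}{2}\mu\cdot$ is precisely what symmetrises $\Dir_0^{\what E}$ on the compact manifold $V$. Evenness is immediate, since the $\Z_2$-grading $\gamma$ coming from $\maS=\maS^+\oplus\maS^-$ acts fibrewise, hence commutes with the averaging representation $\pi_{\what E}$, while $\Dir^{\what E}$ anticommutes with $\gamma$ as already observed. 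Thus the substantive work is the boundedness of commutators and the summability and meromorphy analysis.

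For the first-order (commutator) axiom, the key structural point is that $\pi_{\what E}(k)$ is a \emph{leafwise} operator, namely integration over $\maG^x$ of the holonomy-transported kernel $k$, whereas $\Dir^{\what E}$ differentiates only transversally through the basic connection $\nabla$. I would compute $[\Dir^{\what E},\pi_{\what E}(k)]$ directly on a section $\xi$. Because the metric is bundle-like and $\nabla^\nu$ is the Bott/Levi-Civita connection on $\nu$, transverse covariant differentiation commutes with holonomy transport $h_\gamma$ up to terms supported on the compact support of $k$; differentiating under the integral sign then yields $\pi_{\what E}$ applied to a transverse covariant derivative of $k$ (composed with Clifford multiplication), together with the bounded zeroth-order term coming from $\mu\cdot$. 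Since $k\in C_c^\infty(\maG)$, these derivatives again lie in $C_c^\infty(\maG)$, so $[\Dir^{\what E},\pi_{\what E}(k)]$ is a sum of a leafwise averaging operator and a bounded multiplication, hence bounded on $\maH$; the same estimate shows $\pi_{\what E}(k)$ preserves $\operatorname{dom}(\Dir^{\what E})$.

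The main obstacle is the summability and dimension-spectrum analysis, because $\Dir^{\what E}$ is only \emph{transversally} elliptic on $V$, so the classical Seeley--Gilkey complex-powers machinery does not apply directly and the bare complex powers $((\Dir^{\what E})^2+I)^{-z/2}$ are never trace class (they are essentially constant along the leaves). Here I would invoke Kordyukov's pseudodifferential calculus for Riemannian foliations \cite{Kordyukov}: since $(\Dir^{\what E})^2$ has scalar transverse principal symbol equal to the transverse metric $|\xi_\nu|^2$, its complex powers lie in this transversal calculus with transverse order $-z$, and composing with $\pi_{\what E}(a)$, which is smoothing and compactly supported along the leaves, produces a genuinely trace-class operator for $\Re(z)>q$. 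The function $z\mapsto \Tr\big(\pi_{\what E}(a)((\Dir^{\what E})^2+I)^{-z/2}\big)$ is then analysed through the transverse symbolic and heat-kernel expansion: its leading behaviour is governed by the transverse integral $\int |\xi_\nu|^{-z}\,d\xi_\nu$ over the $q$-dimensional normal directions, which converges for $\Re(z)>q$ and continues meromorphically with \emph{simple} poles at $z=q,q-1,\dots$. The bundle-like and basic hypotheses are exactly what make this transverse symbol holonomy invariant, so that the resulting residue functionals descend to $\maA$; this is the delicate point, requiring control of the off-diagonal decay of the transverse resolvent kernel along the leaf directions, which is supplied by transverse ellipticity together with the compact support of $a$.

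Combining these, the least exponent $\alpha$ for which $\pi_{\what E}(a)((\Dir^{\what E})^2+I)^{-\alpha/2}\in L^{\alpha,\infty}(\maH)$ is read off from the leading pole, giving metric dimension exactly $q=\codim(F)$; the first-order scalar nature of the transverse symbol forces every pole to be at most simple, and no pole occurs beyond the transverse dimension, so the dimension spectrum is simple and contained in $\{m\in\N,\ m\le q\}$. Together with self-adjointness, the bounded commutators, and the grading $\gamma$, this establishes that $(\maA,\maH,\Dir^{\what E})$ is an even spectral triple with the asserted dimension and dimension spectrum.
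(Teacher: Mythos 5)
The paper gives no proof of this statement at all---it is imported verbatim from Kordyukov \cite{Kordyukov}---and your outline is a faithful reconstruction of exactly that source's argument: self-adjointness via the mean-curvature correction, evenness from the grading of $\maS$, bounded commutators because $\pi_{\what{E}}(k)$ is a leafwise averaging operator while $\Dir^{\what{E}}$ differentiates only transversally through the basic connection, and the summability and dimension-spectrum claims obtained from Kordyukov's transversal pseudodifferential calculus, with trace-class behaviour available only after composing the (never trace-class) complex powers with the leafwise smoothing element $\pi_{\what{E}}(a)$, and with meromorphic continuation having simple poles at integers at most $q$. Since the genuinely hard analytic steps (the transversal calculus, the continuation, and the simplicity of the poles, including the contributions of nontrivial holonomy elements to the diagonal trace) are delegated to precisely the reference the paper cites, your proposal is correct as an outline and takes essentially the same approach as the paper.
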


\subsection{The equivariant Connes bicomplex}
We shall  represent the equivariant Connes-Chern character of our spectral triple by the corresponding Connes-Moscovici residue cocycle  in the $H$-equivariant cyclic bicomplex $(b_H,B_H)$. Recall that if $H$ is any  compact group which acts continuously on a locally convex (unital) algebra $\maA$, with $h 1=1$ for any $h\in \maA$, then  the equivariant  Hochschild complex $(C^*(\maA, H), b_H)$  is defined as follows \cite{BH10}. The space of cochains $C^n(\maA, H)$ is composed of the continuous functions
$f: \maA^{\otimes_{n+1}} \to C (H)$ such that 
$$
f(h a^0, \cdots, ha^n) (h g h^{-1}) = f(a^0, \cdots, a^n)(g), \quad  \forall g,h\in H, \forall a^j\in \maA .
$$
We denote by $f(a^0, \cdots, a^n|h)$ the scalar $f(a^0, \cdots, a^n)(h)$ for $f\in C^n(\maA, H)$. The equivariant Hochschild differential $b^H: C^n(\maA, H) \to C^{n+1} (\maA, H)$ is then defined for $f\in C^n(\maA, H)$ by
$$
(b_Hf) (a^0, \cdots, a^{n+1} | h):= (b' f)(a^0, \cdots, a^{n+1} | h) + (-1)^{n+1} f ( h^{-1} (a^{n+1}) a^0  , a^1, \cdots, a^n | h).
$$
Here the operator $b'$ is the standard one given by
$$
(b' f)(a^0, \cdots, a^{n+1} | h) := \sum_{j=0}^n (-1)^j f( a^0, \cdots, a^j a^{j+1}, \cdots, a^{n+1}| h).
$$
The relations $b'^2=0$ and $(b_H)^2=0$ are then satisfied. 

\ms

In order to define the equivariant cyclic complex as well as the equivariant cyclic bicomplex, we need to introduce an equivariant version of the Connes operator $B$.  As in \cite{BH10}, denote  by $\lambda_H: C^n(\maA, H)\to C^n(\maA, H)$ the equivariant  cyclic permutation given by
$$
\lambda_H (a^0, \cdots, a^n |h) := (h^{-1} (a^n), a^0, \cdots, a^{n-1}  |h).
$$
Exactly as in the non-equivariant case \cite{C85}, one easily shows that $(\lambda_H)^{n+1}=\id_{C^n(\maA, H)}$. Hence, the equivariant cyclic antisymm\'etrisation operator $A_H: C^n(\maA, H)\to C^n(\maA, H)$ can be defined by the usual formula
$$
A_H = \sum_{j=0}^n (-1)^{nj} (\lambda_H)^j.
$$
Recall from \cite{BH10} the relation $b_H \circ A_H = A_H \circ b'$. Now the equivariant Connes operator, denoted here  $B_H$, will be the operator $B_H:=A_H \circ B_0: C^n (\maA, H) \to C^{n-1} (\maA, H)$ where 
$$
(B_0 f) (a^0, \cdots , a^{n-1} |h) = f(1, a^0, \cdots, a^{n-1} |h) - (-1)^n f(a^0, \cdots, a^{n-1}, 1 |h).  
$$

\begin{lemma}\
The following relations hold  on $C^*(\maA, H)$:
\begin{enumerate}
\item $B_0\circ b_H+ b'\circ B_0 = \id +(-1)^{n+1}\lambda_H$ on $n$-cochains.
\item $b_H\circ B_H + B_H \circ b_H =0$.
\end{enumerate}
\end{lemma}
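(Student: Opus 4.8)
The plan is to handle the two relations separately: (1) is a direct term-by-term expansion on an arbitrary $n$-cochain, while (2) is a purely formal consequence of (1) together with the already-recorded identity $b_H\circ A_H=A_H\circ b'$ and the cyclicity $(\lambda_H)^{n+1}=\id$.

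For (1), I would fix an $n$-cochain $f$, evaluate on $(a^0,\dots,a^n\,|\,h)$, and write $B_0$ as the difference of a front-insertion and a back-insertion of the unit, so that
\[
(B_0 b_H f)(a^0,\dots,a^n\,|\,h)=(b_H f)(1,a^0,\dots,a^n\,|\,h)-(-1)^{n+1}(b_H f)(a^0,\dots,a^n,1\,|\,h).
\]
In the front-insertion piece the $j=0$ Hochschild face collapses $1\cdot a^0=a^0$ and yields exactly $f(a^0,\dots,a^n\,|\,h)$, while the equivariant wraparound face yields $(-1)^{n+1}f(h^{-1}(a^n),a^0,\dots,a^{n-1}\,|\,h)=(-1)^{n+1}(\lambda_H f)(a^0,\dots,a^n\,|\,h)$; these are precisely the two terms on the right-hand side. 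The remaining front-insertion faces (those with $j\ge 1$), after a shift of summation index, cancel against the front-insertion part of $b'B_0 f$, and likewise the back-insertion faces cancel against the back-insertion part of $b'B_0 f$.

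The only place the hypothesis $h1=1$ is used is in the back-insertion piece: there the last interior face ($a^n\cdot 1=a^n$) and the wraparound face (which contains $h^{-1}(1)$) both reduce to $\pm f(a^0,\dots,a^n\,|\,h)$ with opposite signs and cancel, but this requires $h^{-1}(1)=1$. I expect this sign-and-twist bookkeeping to be the only genuine obstacle, and it becomes routine once the two surviving terms have been isolated.

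For (2), I would apply everything to an $n$-cochain and use $B_H=A_H\circ B_0$ together with $b_H\circ A_H=A_H\circ b'$ to get
\[
b_H B_H + B_H b_H = b_H A_H B_0 + A_H B_0 b_H = A_H b' B_0 + A_H B_0 b_H = A_H\bigl(b'B_0+B_0 b_H\bigr).
\]
By part (1) the bracket equals $\id+(-1)^{n+1}\lambda_H$, so it remains to verify $A_H\bigl(\id+(-1)^{n+1}\lambda_H\bigr)=0$ on $n$-cochains. From $A_H=\sum_{j=0}^n(-1)^{nj}(\lambda_H)^{j}$ and $(\lambda_H)^{n+1}=\id$, reindexing and using $(-1)^{n^2}=(-1)^n$ gives $A_H\lambda_H=(-1)^n A_H$; hence $A_H+(-1)^{n+1}A_H\lambda_H=A_H+(-1)^{2n+1}A_H=0$, which completes the argument.
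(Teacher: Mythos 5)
Your proposal is correct and follows essentially the same route as the paper: part (2) is exactly the paper's argument (left-composition with $A_H$, the identity $b_H\circ A_H=A_H\circ b'$, and $A_H\circ\lambda_H=(-1)^nA_H$, which you rightly verify via $(\lambda_H)^{n+1}=\id$), while for part (1) the paper merely says ``straightforward computation'' and your term-by-term expansion --- including the index shift cancelling the front-insertion faces against $b'B_0$, and the use of $h^{-1}(1)=1$ to cancel the last interior face against the wraparound face in the back-insertion piece --- fills that in correctly.
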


\begin{proof}\
The first item is a straightforward computation. If we compose this relation on the left with the operator $A_H$, then we get on $n$-cochains
$$
B_H \circ b_H + A_H\circ b'\circ B_0 = A_H + (-1)^{n+1} A_H\circ \lambda_H.
$$
But $A_H\circ b'= b_H\circ A_H$ and hence $A_H\circ b'\circ B_0=b_H\circ B_H$. Moreover, on $C^n(\maA, H)$, we have
$$
A_H\circ \lambda_H= (-1)^n A_H\text{ therefore }(-1)^{n+1} A_H\circ \lambda_H = - A_H.
$$
So, we finally get the second relation $B_H\circ b_H + b_H\circ B_H =0$ on $C^*(\maA, H)$. 
\end{proof}

As in the non-equivariant case, the equivariant cyclic cohomology of the algebra $\maA$, as defined in \cite{BH10} using the equivariant cyclic subcomplex of the equivariant Hochschild complex, can be recovered from  the second  filtration of the bicomplex $(b_H, B_H)$.  See \cite{C94} for more precise details. 

\subsection{The equivariant residue cocycle} 
Next we take into account the leafwise action of the group $H$ which is again supposed to act by holonomy diffeomorphisms. Our algebra $\maA= C_c^\infty (\maG)$ is not unital, and the unit appearing in the above formula for $B_0$ can be an added $H$-trivial unit.   We denote again by $\gamma$ the grading involution of $\maH$ and by $U(h)$ the unitary of the Hilbert space  $\maH$ which is associated with an element $h\in H$. The following is a straightforward generalization to the equivariant case of the Connes-Moscovici local index theorem \cite{CM95}. See also \cite{BH18, BH19}.

\begin{theorem}\label{EquivCocycle}\
For $a_0\in \maA$ and $h\in H$, set $
\phi_0(a_0 \vert h) \,\, = \,\, \Res_{z=0} \Bigl[ z^{-1}\, \Tr \left( \gamma U(h) a_0(\id + (\Dir^{\what{E}})^2)^{-z}\right)  \Bigr].$   In addition, for 
$k >0$, $k$ even, and for $a_j\in \maA$, set
$$
\phi_k (a_0, \cdots , a_k \vert h)  \,\, := \,\, \frac{(\frac{k}{2}-1)!}{k!} \, \Res_{z=0} \left[\Tr \left(\gamma \, U(h) \, a^0 [\Dir^{\what{E}}, a^1] \cdots [\Dir^{\what{E}}, a^k] \, (\id + (\Dir^{\what{E}})^2)^{-k/2 - z}\right)\right].
$$
Then for any even $k\geq 0$, $\phi_k\in C^k(\maA, H)$ (with $\phi_k=0$ for $k>\codim (F)$), and $b_H \phi_k + B_H\phi_{k+2} = 0$.  Hence, $\phi=(\phi_k)_{k}$ is a finitely supported  equivariant cyclic cocycle over the $H$-algebra $\maA$.
\end{theorem}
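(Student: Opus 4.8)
The plan is to transcribe the Connes--Moscovici derivation of the residue cocycle \cite{CM95} into the equivariant framework, the only genuinely new feature being the insertion of the unitary $U(h)$ implementing the $H$-action. Throughout I would exploit three facts about the triple $(\maA,\maH,\Dir^{\what{E}})$: that $H$ acts by isometries, so $U(h)$ commutes with $\Dir^{\what{E}}$, with the grading $\gamma$, and with every complex power $(\id+(\Dir^{\what{E}})^2)^{-z}$; that $U(h)$ implements the action on the algebra, $U(h)\pi(a)U(h)^{-1}=\pi(h\cdot a)$; and that $U(h)$ is a holonomy operator preserving the transverse pseudodifferential calculus, so inserting it does not disturb the analytic structure guaranteed by the preceding theorem of \cite{Kordyukov}.

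First I would address well-definedness and finite support. For $\Re(z)$ large the operator under the trace is trace class, and since $a^0[\Dir^{\what{E}},a^1]\cdots[\Dir^{\what{E}},a^k]$ lies in the algebra generated by $\maA$ and its commutators with $\Dir^{\what{E}}$, the hypothesis that the triple has discrete simple dimension spectrum contained in $\{m\in\N,\ m\leq q\}$, $q=\codim(F)$, ensures that $z\mapsto \Tr(\gamma U(h) a^0[\Dir^{\what{E}},a^1]\cdots[\Dir^{\what{E}},a^k](\id+(\Dir^{\what{E}})^2)^{-k/2-z})$ continues meromorphically with poles only in the dimension spectrum; the insertion of the bounded operator $U(h)$ leaves this intact because it commutes with the resolvent and belongs to the calculus. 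Hence each residue exists, and for $k>q$ the function is already holomorphic at $z=0$, so $\phi_k=0$ and $\phi=(\phi_k)$ is finitely supported. Next I would check that $\phi_k\in C^k(\maA,H)$, namely $\phi_k(h\cdot a^0,\dots,h\cdot a^k\,\vert\,hgh^{-1})=\phi_k(a^0,\dots,a^k\,\vert\,g)$. Substituting $\pi(h\cdot a^j)=U(h)\pi(a^j)U(h)^{-1}$, $U(hgh^{-1})=U(h)U(g)U(h)^{-1}$, and $[\Dir^{\what{E}},\pi(h\cdot a^j)]=U(h)[\Dir^{\what{E}},\pi(a^j)]U(h)^{-1}$ (the last because $U(h)$ commutes with $\Dir^{\what{E}}$), every internal pair $U(h)^{-1}U(h)$ cancels; since $U(h)$ also commutes with $\gamma$ and with $(\id+(\Dir^{\what{E}})^2)^{-k/2-z}$, the integrand becomes $U(h)\,X\,U(h)^{-1}$ with $X=\gamma U(g)\pi(a^0)[\Dir^{\what{E}},\pi(a^1)]\cdots(\id+(\Dir^{\what{E}})^2)^{-k/2-z}$, and conjugation invariance of the trace yields the identity for every $z$, hence after taking residues.

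The substantive step, and the one I expect to be the main obstacle, is the cocycle relation $b_H\phi_k+B_H\phi_{k+2}=0$. The engine is a residue ``graded Stokes'' identity: because $\gamma$ anticommutes and $U(h)$ commutes with $\Dir^{\what{E}}$, the residue of $\Tr(\gamma U(h)[\Dir^{\what{E}},Y](\id+(\Dir^{\what{E}})^2)^{-s})$ vanishes, being the graded trace of a graded commutator with $\Dir^{\what{E}}$. Combined with the Leibniz rule $[\Dir^{\what{E}},a^i a^{i+1}]=[\Dir^{\what{E}},a^i]a^{i+1}+a^i[\Dir^{\what{E}},a^{i+1}]$, this makes the $b'$-part of $b_H\phi_k$ telescope into boundary terms. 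The new twisted summand of $b_H$, carrying $h^{-1}(a^{k+1})a^0$, is produced precisely by the \emph{twisted} cyclicity of the graded trace: bringing $\pi(a^{k+1})$ around past $U(h)$ replaces it by $U(h)^{-1}\pi(a^{k+1})U(h)=\pi(h^{-1}\!\cdot a^{k+1})$, matching the definitions of $b_H$ and $\lambda_H$ recalled above.

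Finally, the terms with an inserted unit, together with the contributions obtained by commuting $(\id+(\Dir^{\what{E}})^2)^{-k/2-z}$ through the factors $[\Dir^{\what{E}},a^j]$ (the Taylor/pseudodifferential expansion of the resolvent), reassemble into $B_H\phi_{k+2}$; the simplicity of the dimension spectrum furnished by \cite{Kordyukov} is what lets the higher-order poles in this expansion be discarded, so that the leading-residue formula for the $\phi_k$ already closes, and the constants $(\frac{k}{2}-1)!/k!$ are exactly the combinatorial factors forcing the remaining cross-terms to cancel. The delicate point throughout is that all these manipulations are a priori valid only for $\Re(z)$ large, so each identity must be propagated to $z=0$ by meromorphic continuation before taking residues; once the bookkeeping of the $U(h)$-twist is in place, this is the same analytic argument as in \cite{CM95}, and the conclusion that $\phi=(\phi_k)$ is a finitely supported equivariant $(b_H,B_H)$-cocycle follows.
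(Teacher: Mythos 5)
Your proposal is correct in outline but takes a genuinely different route from the paper. You propose to rerun the entire Connes--Moscovici derivation \cite{CM95} with the unitary $U(h)$ inserted under every trace, verifying $b_H\phi_k+B_H\phi_{k+2}=0$ directly from the graded-commutator (Stokes) identity, the Leibniz rule, the twisted cyclicity producing the $\lambda_H$-term, and the resolvent expansion controlled by the simple dimension spectrum. The paper never reopens the CM computation: it uses the hypothesis---which your argument never invokes---that $H$ acts by \emph{holonomy diffeomorphisms}, so that each $h$ yields a multiplier $\psi(h)$ of $C_c^\infty(\maG)$ with $\pi_{\what{E}}(\psi(h)a)=U(h)\circ\pi_{\what{E}}(a)$. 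This converts the twisted cochain into the untwisted CM cocycle with a twisted first slot, $\phi_k(a_0,\dots,a_k\vert h)=\phi^{CM}_k(\psi(h)a_0,a_1,\dots,a_k)$, and the ``strong equivariance'' relations (the ability to slide $\psi(h)$ through the slots, Remark \ref{StrongEquivariance}) then transport $b$ and $B$ to $b_H$ and $B_H$, so the equivariant cocycle relation follows from the non-equivariant CM relation used as a black box. Your route buys generality---it would apply to any $H$-equivariant spectral triple with simple dimension spectrum, with no holonomy hypothesis---at the cost of redoing all of CM's analytic bookkeeping; the paper's route is a few lines of algebra.

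One step of your sketch does not stand as written, and it is exactly where the holonomy hypothesis earns its keep. You justify the existence of the residues by asserting that inserting $U(h)$ leaves the meromorphic continuation intact ``because it commutes with the resolvent and belongs to the calculus.'' Commutation with the resolvent gives nothing here, and membership of $U(h)$ in the transverse pseudodifferential calculus is not furnished by the theorem of \cite{Kordyukov} you cite, which establishes the dimension spectrum only for traces of the form $\Tr\bigl(\pi(b)(\id+(\Dir^{\what{E}})^2)^{-z}\bigr)$ with $b$ in the algebra generated by $\maA$ and its commutators with $\Dir^{\what{E}}$; the twisted zeta functions $\Tr\bigl(\gamma\,U(h)\,b\,(\id+(\Dir^{\what{E}})^2)^{-z}\bigr)$ are a priori a new equivariant analytic object, whose continuation would otherwise require an equivariant heat-kernel analysis localizing at the fixed point set. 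The multiplier identity repairs this instantly: $U(h)\pi_{\what{E}}(a_0)=\pi_{\what{E}}(\psi(h)a_0)$ with $\psi(h)a_0\in C_c^\infty(\maG)$, so every twisted trace is literally an untwisted one and the non-equivariant dimension-spectrum result applies verbatim. With that observation inserted your plan goes through---though at that point you may as well adopt the paper's reduction wholesale, since the same identity also collapses the entire cocycle verification.
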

 
The representation $\pi_{\what{E}}$ is of course implicit in the formulae for $\phi_k$. 

 \begin{proof}\  We first point out that each $\phi_k$ belongs to $C^k (\maA, H)$. Indeed, the unitary $U(h)$ commutes with the Dirac operator $\Dir^{\what{E}}$ and the representation of $C_c^\infty (\maG)$ in the Hilbert space $\maH$ is $H$-equivariant, therefore,
\begin{eqnarray*}
 \phi_0 (ha_0 \vert hgh^{-1}) &=& \Res_{z=0} \Bigl[ z^{-1}\, \Tr \left( \gamma U(hgh^{-1}) (ha_0) (\id + (\Dir^{\what{E}})^2)^{-z}\right)  \Bigr]\\
 & = & \Res_{z=0} \Bigl[ z^{-1}\, \Tr \left( \gamma U(h) U(g)  a_0 U(h^{-1}) (\id + (\Dir^{\what{E}})^2)^{-z}\right)  \Bigr]\\
 & = & \Res_{z=0} \Bigl[ z^{-1}\, \Tr \left(U(h)  \gamma U(g) a_0  (\id + (\Dir^{\what{E}})^2)^{-z} U(h^{-1}) \right)  \Bigr]\\
 & = & \Res_{z=0} \Bigl[ z^{-1}\, \Tr \left( \gamma  U(g) a_0  (\id + (\Dir^{\what{E}})^2)^{-z}) \right)  \Bigr]\\
 & = &  \phi_0 (a_0 \vert g).
 \end{eqnarray*}
 In the same way and using again the $H$-invariance of $\Dir^{\what{E}}$ and the $H$-equivariance of the representation, we get
 \begin{eqnarray*}
 &  &\Tr \left(\gamma \, U(hgh^{-1}) \, (ha^0) [\Dir^{\what{E}}, (ha^1)] \cdots [\Dir^{\what{E}}, (ha^k)] \, (\id + (\Dir^{\what{E}})^2)^{-k/2 - z}\right)\\
  & = &\Tr \left(\gamma \, U(h) U(g) \, (a^0 U(h^{-1}) [\Dir^{\what{E}}, U(h) a^1 U(h^{-1}))] \cdots [\Dir^{\what{E}}, U(h) a^k U(h^{-1}))] \, (\id + (\Dir^{\what{E}})^2)^{-k/2 - z}\right)\\
  &=& \Tr \left(U(h) \gamma \, U(g) \, (a^0 U(h^{-1}) U(h) [\Dir^{\what{E}}, a^1] U(h^{-1})) \cdots U(h) [\Dir^{\what{E}}, a^k] U(h^{-1})) \, (\id + (\Dir^{\what{E}})^2)^{-k/2 - z}\right)\\
  &=& \Tr \left(U(h) \gamma \, U(g) a^0  [\Dir^{\what{E}}, a^1] \cdots  [\Dir^{\what{E}}, a^k] \, (\id + (\Dir^{\what{E}})^2)^{-k/2 - z}U(h^{-1})) \right)\\
  &=& \Tr \left( \gamma \, U(g) a^0  [\Dir^{\what{E}}, a^1] \cdots  [\Dir^{\what{E}}, a^k] \, (\id + (\Dir^{\what{E}})^2)^{-k/2 - z} \right).\\
 \end{eqnarray*}
 Hence, we also have the equivariance property for $k>0$
 $$
 \phi_k (ha_0, \cdots , ha_k \vert hgh^{-1}) = \phi_k (a_0, \cdots , a_k \vert g).
 $$
 Since the spectral triple has dimension equal to $\codim (F)$, we know that for $k>\codim (F)$, the operator $(\id + (\Dir^{\what{E}})^2)^{-k/2}$ is trace-class and hence the corresponding residue vanishes. This shows that $\phi_k=0$ whenever $k>q$. 
 
 \ms
 
Since the group $H$ acts by holonomy diffeomorphisms, the multiplier $\psi (h)$ of the algebra $C_c^\infty (\maG)$, as considered in \cite{BH10}, is well defined. More precisely, using the smooth section $\varphi^h$, the multiplier $\psi (h)$ is defined as
$$
\psi (h) (\xi) (\gamma) := \xi ((h^{-1}\gamma) \circ \varphi^{h^{-1}}(s(\gamma))), \quad \text{ for }\xi\in C_c^\infty (\maG), \gamma\in \maG.
$$
Moreover, we have for any $a\in C_c^\infty (\maG)$
 $$
 \pi_{\what{E}} (\psi (h) a) = U(h) \circ \pi_{\what{E}} (a).
 $$
 See \cite{BH10}. Therefore, denoting by $\phi_k^{CM}$ the Connes-Moscovici cocycle defined by 
 $$
 \phi_k^{CM} (a_0, \cdots, a_k) = \phi_k (a_0, \cdots, a_k\vert 1_H),
 $$
 we deduce that for any even $k\geq 0$, the following relation holds
 $$
 \phi_k (a_0, \cdots , a_k \vert h) = \phi_k^{CM} (\psi (h) a_0, a_1, \cdots, a_k). 
 $$
 Note that since the spectral triple has simple spectrum dimension, all the higher residues and commutators with the square of the operator $\Dir^{\what{E}}$ cancell out, and we are reduced to the value $\phi_k (a_0, \cdots, a_k\vert 1_H)$ for the Connes-Moscovici cocycle. 
 
\ms

Computing $b_H\phi_k$ we get 
$$
(b_H\phi_k) (a_0, \cdots, a_{k+1}\vert h) = (b\phi_k^{CM}) (\psi(h) a_0, a_1, \cdots, a_{k+1}).
$$
In the same way we have for any $a\in C_c^\infty(\maG)$, $U(h)[\Dir^{\what{E}},a] = [\Dir^{\what{E}}, \psi (h) a]$ and hence
 $$
 B_0\phi_{k+2} (a_0, \cdots, a_{k+1}\vert h) = B_0\phi_{k+2}^{CM} (\psi(h) a_0, a_1, \cdots, a_{k+1}).
 $$
 Thus we have
 \begin{eqnarray*}
 \lambda_H (B_0\phi_{k+2}) (a_0, \cdots, a_{k+1}\vert h) &=& (B_0\phi_{k+2}) (h^{-1}a_{k+1}, a_0, \cdots, a_k\vert h)\\
 &=& B_0\phi_{k+2}^{CM} (\psi(h) (h^{-1} a_{k+1}), a_0, a_1, \cdots, a_{k})\\
 &=& B_0\phi_{k+2}^{CM} (a_{k+1}\psi(h), a_0, a_1, \cdots, a_{k})\\
 &=& \phi_{k+2}^{CM} (1, a_{k+1}\psi(h), a_0, \cdots, a_k).
 \end{eqnarray*}
 Using again the $H$-equivariance of the representation and the $H$-invariance of the operator $\Dir^{\what{E}}$, we easily see that
 $$
 \phi_{k+2}^{CM} (1, a_{k+1}\psi(h), a_0, \cdots, a_k) =\lambda (B_0\phi_{k+2}^{CM}) (\psi(h) a_0, a_1, \cdots, a_{k+1}).
 $$
 Therefore,
$$
 B_H\phi_{k+2} (a_0, \cdots, a_{k+2}\vert h)  = B\phi_{k+2}^{CM} (\psi(h) a_0, \cdots, a_{k+1}),
 $$
 and we thus deduce 
 $$
 (b_H\phi_k + B_H \phi_{k+2}) (a_0, \cdots, a_{k+1}\vert h) = (b\phi_k^{CM} + B\phi_{k+2}^{CM}) (\psi(h) a_0, a_1,  \cdots, a_{k+1}) = 0.
 $$
 The last equality is due to the cocycle relation satisfied in the non-equivariant setting by the Connes-Moscovici cochain, see again \cite{CM95}. 
 \end{proof}

\begin{remark}\label{StrongEquivariance}
The main property of the  cochains $\phi^{CM}_k$ used in this proof of Theorem \ref{EquivCocycle} is that they are equivariant, due to the stronger  relations:
$$
\phi^{CM}_k (a_0, \cdots, a_{j-1}, a_j \psi(h), a_{j+1}, \cdots, a_k) = \phi^{CM}_k (a_0, \cdots, a_{j-1}, a_j,  \psi(h) a_{j+1}, \cdots, a_k)\text{ for }0\leq j\leq k-1,
$$
plus the relation $\phi^{CM}_k (a_0, \cdots, a_{k-1}, a_k \psi(h)) = \phi^{CM}_k (\psi(h) a_0, a_1, \cdots, a_k)$. 
\end{remark}

Theorem \ref{EquivCocycle} shows that the $(b_H, B_H)$-cocycle $(\phi_k)$ defines a class $[\phi]$ in the (entire) equivariant cyclic cohomology of the algebra $C_c^\infty (\maG)$. Therefore, it can be paired with the equivariant $K$-theory to yield a central function on $H$. 
   
\begin{theorem}\label{Integrality}\
Assume that $h$ topologically generates a compact Lie group denonted $H$. If  $e$ is an $H$-invariant idempotent in $C_c^\infty (\maG)\otimes \End (X)$ for some finite dimension $H$-representation $\rho:H\to \End(X)$, then the following combination of  residues of zeta functions of our operators:
$$
\sum_{n=0}^{\codim(F)/2} \frac{(-1)^n (2n)!}{n!} (\phi_{2n}\sharp \Tr) ((\psi (h)\otimes \rho(h))(e-1/2), e, \cdots, e),
$$
depends only on the equivariant $K$-theory class of $e$, and moreover it  belongs to $R(H) (h)=\{\chi(h), \chi\in R(H)\}$. 
\end{theorem}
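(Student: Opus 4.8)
The statement asserts two things about the displayed sum, which I abbreviate $I(e,h)$: that it is invariant under the equivariant $K$-theory class of $e$, and that it belongs to $R(H)(h)$. The plan is to recognize $I(e,h)$ as the value at $h$ of the Chern--Connes pairing of the residue class $[\phi]$ of Theorem \ref{EquivCocycle} with $[e]\in K^H(C_c^\infty(\maG))$, and then to identify that pairing with the character of an equivariant Fredholm index, which is integral by construction.

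For the first assertion, the universal coefficients $(-1)^n(2n)!/n!$ occurring in $I(e,h)$ are precisely those realizing the pairing of a normalized even $(b_H,B_H)$-cocycle against an $H$-invariant idempotent; with them, $I(e,h)$ is the value at $h$ of the central function $\langle[\phi],[e]\rangle\in C(H)^H$ produced by the pairing of Theorem \ref{Pairing}. Since $(\phi_k)$ is a genuine $(b_H,B_H)$-cocycle --- this is the content of Theorem \ref{EquivCocycle}, namely $b_H\phi_k+B_H\phi_{k+2}=0$ --- it represents a class in even equivariant cyclic cohomology, and the pairing of that class with $[e]$ is defined on equivariant $K$-theory; hence $I(e,h)$ depends only on $[e]$. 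The sum is finite through $n=\codim(F)/2$ because $\phi_k=0$ for $k>\codim(F)$, as already recorded in Theorem \ref{EquivCocycle}.

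For the integrality I would read $I(e,h)$ as an equivariant index evaluated at $h$. The even spectral triple $(\maA,\maH,\Dir^{\what{E}})$, equipped with the grading $\gamma$ and the unitaries $U(h)$, is an $H$-equivariant Fredholm module; an $H$-invariant idempotent $e\in\widetilde{\maA}\otimes\End(X)$ compresses $\Dir^{\what{E}}\otimes\id_X$ to an $H$-equivariant Fredholm operator $D_e^{+}$ from $e(\maH^{+}\otimes X)$ to $e(\maH^{-}\otimes X)$. Because both $e$ and $\Dir^{\what{E}}$ are $H$-invariant, $\ker D_e^{+}$ and $\coker D_e^{+}$ are finite--dimensional $H$-modules, so $[\ker D_e^{+}]-[\coker D_e^{+}]$ defines an element $\Ind^{H}(D_e^{+})\in R(H)$ whose character at $h$ lies in $R(H)(h)$ by definition. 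It therefore suffices to establish the equivariant local index formula
$$
I(e,h)\;=\;\Tr\!\big((U(h)\otimes\rho(h))\vert\ker D_e^{+}\big)-\Tr\!\big((U(h)\otimes\rho(h))\vert\coker D_e^{+}\big).
$$

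This identity is the heart of the matter and the step I expect to be the main obstacle. In the non-equivariant case it is exactly the Connes--Moscovici theorem \cite{CM95}, which asserts that the residue cochain $(\phi_k^{CM})$ represents the Connes--Chern character of the Fredholm module and hence computes the index pairing. To descend to the localized value at $h$, I would use the relation established inside the proof of Theorem \ref{EquivCocycle}, $\phi_k(a_0,\dots,a_k\vert h)=\phi_k^{CM}(\psi(h)a_0,a_1,\dots,a_k)$, together with $\pi_{\what{E}}(\psi(h)a)=U(h)\pi_{\what{E}}(a)$; these reduce $I(e,h)$ to the Connes--Moscovici pairing with the fixed unitary $U(h)\otimes\rho(h)$ inserted in the leading slot, i.e. to the localized Lefschetz index cocycle. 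The analytic core is then an equivariant McKean--Singer argument: since $U(h)\otimes\rho(h)$ commutes with $\Dir^{\what{E}}\otimes\id_X$ and preserves the grading, the supertrace $\Tr(\gamma\,(U(h)\otimes\rho(h))\,e^{-tD_e^{2}})$ is independent of $t$, its large--$t$ limit equals the right-hand side above, and its small--$t$ zeta--residue expansion reproduces $I(e,h)$. The delicate point, requiring the most care, is to verify that inserting the isometry $U(h)$ does not disturb the transverse parametrix estimates underpinning the residue calculus of \cite{Kordyukov}, so that the relevant zeta functions retain the simple poles located as in Theorem \ref{EquivCocycle}; granting this, the two endpoints of the $t$-interval yield $I(e,h)=\Ind^{H}(D_e^{+})(h)\in R(H)(h)$, completing the argument.
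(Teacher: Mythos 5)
Your first paragraph is essentially the paper's reading of the statement and is fine: the displayed sum is the standard even $(b_H,B_H)$-pairing of the equivariant residue class $[\phi]$ of Theorem \ref{EquivCocycle} with $[e]$, it depends only on the class of $e$ in $K^H(C_c^\infty(\maG))$ because $\phi$ is an equivariant cocycle, and it is finite because $\phi_k=0$ for $k>\codim(F)$.

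The integrality half, however, has a genuine gap. Everything hinges on the identity $I(e,h)=\Ind^H(D_e^+)(h)$, and as written you assume it rather than prove it. The reduction $\phi_k(a_0,\dots,a_k\vert h)=\phi_k^{CM}(\psi(h)a_0,a_1,\dots,a_k)$ does \emph{not} put you in the scope of the non-equivariant Connes--Moscovici theorem: $(\psi(h)\otimes\rho(h))(e-1/2)$ is not an idempotent entry, so the expression with $\psi(h)$ inserted in the leading slot is no longer a $K$-theory pairing for $\phi^{CM}$, and \cite{CM95} says nothing about it directly. Your fallback --- an equivariant McKean--Singer argument whose ``small-$t$ zeta-residue expansion reproduces $I(e,h)$'' --- is precisely an equivariant local index theorem for the transversally elliptic operator $\Dir^{\what{E}}$ with $U(h)$ inserted; that is a theorem-sized claim, not a verification, and you yourself concede the crucial analytic point (``granting this''). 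Note that even non-equivariantly the residue cocycle is not extracted from a heat-trace expansion by McKean--Singer alone; its identification with the index pairing requires the transgression and renormalization machinery of \cite{CM95}.

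The paper's route is purely cohomological and avoids any new analysis. It introduces the degree-$q'$ Chern character cocycle $\tau$ of the Fredholm module $(\maH,F)$, $F=\sign(\Dir^{\what{E}})$, whose pairing with $[e]$ is an equivariant Fredholm index in $R(H)$ \emph{by definition}. The key observation is that both $\tau$ and $\phi$ arise from their non-equivariant counterparts by pre-composing the first slot with $\psi(h)$ (the strong equivariance property of Remark \ref{StrongEquivariance}), and that the Connes--Moscovici transgression cochains $\theta^{CM}_{2j+1}$, with $\tau^{CM}-\phi^{CM}=(b+B)\theta^{CM}$, satisfy this same property; hence the coboundary relation descends to the $(b_H,B_H)$-bicomplex and $[\phi]=[\tau]$ equivariantly, which yields $\langle[e],[\phi]\rangle\in R(H)$ at once. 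Finally, you never address the non-invertibility of $\Dir^{\what{E}}$, which is needed even to define $F=\sign(\Dir^{\what{E}})$; the paper devotes the second half of its proof to the $H$-equivariant doubling $(\what{\maH},\widehat{\Dir}^{\what{E}}_\ep)$ and a homotopy in $\ep$ reducing to the invertible case. To repair your argument, replace the heat-kernel step by the equivariance check on the CM transgression, or else genuinely prove the equivariant local index formula you invoke.
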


\begin{remark}
If for instance $h$ has finite order $\kappa$, then $R(H)(h)=\Z[e^{2i\pi/\kappa}]$, so for a holonomy diffeomorphism which is an involution we always get an integer.
\end{remark}
 
\begin{proof}\
Assume first that the operator $\Dir^{\what{E}}$ is invertible and denote by  $\tau$ the equivariant cyclic cocycle of even degree $q'$, for some $q'>q$, which  represents the Connes-Chern character of the even Fredholm module $(\maH, F)$, where $F=\sign (\Dir^{\what{E}})$, a self-adjoint symmetry. So, by its very definition, the equivariant pairing of the class of $\tau$ with the equivariant $K$-theory class represented by $e$ is an equivariant index pairing, i.e.
$$
\langle [e], [\tau]\rangle  (h) = \Ind^H \left( e (F\otimes \id_X)e : e(\maH^+\otimes X)\to e(\maH^-\otimes X)\right) (h),
$$
where $e (F\otimes \id_X)e : e(\maH^+\otimes X)\to e(\maH^-\otimes X)$ is a Fredholm $H$-invariant operator and its equivariant index is the virtual $[\Ker] - [\Coker]$ representation
and hence belongs to $R(H)$.  Moreover, we have the following explicit formula for $\tau$ 
$$
\tau (a_0, \cdots, a_{q'}\vert h) = C(q') \Tr\left( \gamma U (h) a_0 [F, a_1] \cdots [F,  a_{q'}]\right),
$$
An important observation is then that $\tau$ also satisfies the stronger equivariance property described in Remark \ref{StrongEquivariance}, and we have 
 $$
 \tau (a_0, \cdots, a_{q'}\vert h) = \tau (\psi (h) a_0, \cdots, a_{q'}\vert 1)=:\tau^{CM} (\psi(h) a_0, \cdots, a_{q'}),
 $$
so that setting  $\tau^{CM} (a_0, \cdots, a_{q'}):= \tau ( a_0, a_1, \cdots, a_{q'}\vert 1)$ we get  exactly the corresponding cyclic cocycle which represents the non-equivariant Connes-Chern character of the even Fredholm module $(\maH, F)$ as described in \cite{CM95}. 

\ms

One of the main results  proven in \cite{CM95} is that   $\tau^{CM} - \phi^{CM} $ is a $(b+B)$-coboundary, i.e. there exists an odd transgression cochain $\theta^{CM}=(\theta^{CM}_{2j+1})_j$ such that $\tau^{CM} - \phi^{CM} = (b+B) \theta^{CM}$. A carefull inspection of the cochains $\theta^{CM}_{2j+1}$ again shows that they satisfy as well the strong equivariance property of Remark \ref{StrongEquivariance}. Hence, we obtain exactly as in the proof of Theorem \ref{EquivCocycle} the relations
$$
 b\theta^{CM}_{2j-1} (\psi(h) a_0, a_1, \cdots, a_{2j}) = b_H\theta_{2j-1} (a_0, \cdots, a_{2j}\vert h)
 $$
\and 
 $$
 B\theta^{CM}_{2j+1} (\psi(h) a_0, a_1, \cdots, a_{2j}) = B_H\theta_{2j+1} (a_0, \cdots, a_{2j}\vert h),
$$
where $\theta_{2j\pm 1} (a_0, \cdots, a_{2j\pm 1}\vert h) := \theta^{CM}_{2j\pm 1} (\psi (h) a_0, \cdots, a_{2j\pm 1})$. A consequence is that we have equality in the equivariant cyclic cohomology (described by the $(b_H, B_H)$-bicomplex) of the class of $\phi$ and that of $\tau$. Therefore, we finally obtain
$$
\langle [e], [\phi]\rangle  \, \in\; R(H)\quad \text{ and hence } \quad \langle [e], [\phi]\rangle (h)  \, \in\; R(H)(h). 
$$
Now the formula for the pairing $\langle [e], [\phi]\rangle (h)$ is exactly given by \cite{C94,CM95},
$$
\langle [e], [\phi]\rangle (h) = \sum_{n=0}^{\codim(F)/2} \frac{(-1)^n (2n)!}{n!} (\phi_{2n}\sharp \Tr) ((\psi (h)\otimes \rho(h))(e-1/2), e, \cdots, e).
$$
Since  the operator  $\Dir^{\what{E}}$ is not invertible in general (note also that the algebra $C_c^\infty (\maG)$ is rarely unital), we deform $H$-equivariantly the spectral triple as follows. Define 
$$
\what{\maH} := \maH \oplus \maH\text{ with the new grading } \what\gamma := \left(\begin{array}{cc} \gamma & 0 \\ 0 & -\gamma\end{array}\right) \text{ and new operator } {\widehat {\Dir}}^{\what{E}}:= \left(\begin{array}{cc} \Dir^{\what{E}} & Id \\ Id & -\Dir^{\what{E}}\end{array}\right).
$$
The action of $H$ is simply extended to $\widehat\maH$ as the diagonal action, so  $U(h)$ is replaced by ${\widehat U} (h)$ which is given by $\left[\begin{array}{cc} U (h) & 0 \\ 0 & U(h) \end{array}\right]$.  Now with the new representation $\what\pi: k\mapsto \left(\begin{array}{cc} \pi_{\what{E}} (k) & 0 \\ 0 & 0 \end{array}\right)$, it is easy to check that  we get again an even $H$-equivariant spectral triple $(\maA, \widehat\maH, {\widehat {\Dir}}^{\what{E}})$ with invertible operator. Moreover, a straightforward $H$-equivariant homotopy argument shows that the class, in the (entire) $H$-equivariant $(b^H, B^H)$ cyclic bicomplex, of the residue cocycle ${\what\phi}$ associated with $(\maA, \widehat\maH, {\widehat {\Dir}}^{\what{E}})$, say obtained as $\phi$ is but replacing the representation $\pi_{\what{E}}$ by $\what\pi$ and the operator ${\Dir}^{\what{E}}$ by  ${\widehat {\Dir}}^{\what{E}}$,  is unchanged if we consider the path of operators ${\widehat {\Dir}}^{\what{E}}_{\ep}$ for $\ep\in [0,1]$:
$$
{\widehat {\Dir}}^{\what{E}}_{\ep} := \left(\begin{array}{cc} \Dir^{\what{E}} & \ep Id \\ \ep Id & -\Dir^{\what{E}}\end{array}\right).
$$
Now for $\ep = 0$ we have
\begin{multline*}
\Tr \left(\what\gamma\widehat\psi(h)  \what\pi (a_0) [ \widehat\Dir^{\what{E}}_0, \what\pi (a_1)] \cdots [\widehat\Dir^{\what{E}}_0, \what\pi (a_{q'})] (I + ({\widehat {\Dir}}^{\what{E}}_0)^2)^{-q'/2-z}\right) \\= \Tr \left(\gamma  \Psi (h) a_0 [ \Dir^{\what{E}}, a_1] \cdots [\Dir^{\what{E}}, a^{q'}] (I + {\Dir^{\what{E}}}^2)^{-q'/2-z}\right).
\end{multline*}
As a consequence we see that the pairing of the class of $\phi$ with equivariant $K$-theory coincides with that of the class of the equivariant cocycle $\what\phi$. If we now consider as above the even $H$-equivariant Fredolm module $(\what\maH, \what F)$ where $\what F=\sign ({\widehat {\Dir}}^{\what{E}})$ is the self-adjoint symmetry which is the sign of the invertble operator ${\widehat {\Dir}}^{\what{E}}$, then for any even $q'>q$, we get again an equivariant cyclic Hochschild cocycle by setting just as above
$$
\tau (a_0, \cdots, a_{q'}\vert h) := C(q') \Tr \left(\what\gamma {\what U}(h) \what\pi (a_0) [{\what F}, \what\pi (a_1)]\cdots  [{\what F}, \what\pi (a_{q'})]\right),
$$
where $C(q')$ is some appropriate normalisation constant \cite{C86}. 
The proof is now complete. 
 \end{proof}
 
 \begin{remark}\
Another method to overcome the non-invertibility of the operator ${\Dir}^{\what{E}}$ is to first reduce to the unital case and then apply some standard perturbations, see for instance \cite{GorokhovskyThesis}$[\text{Remark } 2.2]$. A more systematic study of non unital Fredholm modules is also  carried out in  \cite{GayralSukochev2012}, where the authors include semi-finite spectral triples over non-unital algebras as well.
\end{remark}  
 
 \begin{remark}
 In the case of non-Riemannian foliations,  in \cite{CM95} Connes and Moscovici gave a general reduction process  to the case of triangular structures by using the Connes fibration. The resulting foliations  were also called almost Riemannian foliations in \cite{BH18}.  The  spectral triple obtained in this more general case produces a highly involved formula for the cyclic cocycle $\phi=(\phi_{k})_k$, see again \cite{CM95}. However, the previous integrality theorem is still valid since again the pairing of the class of $\phi$ with an equivariant $K$-theory class coincides with that of an equivariant Fredholm module and hence with an equivariant index pairing. 
 \end{remark} 
 
\begin{corollary}\label{EqP}
The  pairing,  with $K^H(C_c^\infty (\maG))$, of the equivariant residue class $[\phi]$ associated with the spectral triple $(\maA, \maH, \Dir^{\what{E}})$ extends to a pairing with $K^H(C^*(V, F))$, and takes values in the representation ring $R(H)\subset C(H)$. So, for any topological generator $h$ of $H$ and any $x\in K^H(C^*(V, F))$, we have
$$
<[\phi] , x >   \,\,\,  \in \,\,\,  R(H),  \quad \text{ and hence }\quad <[\phi] , x >   (h) \,\,\,  \in \,\,\,  R(H) (h).
$$
\end{corollary}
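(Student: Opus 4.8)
The plan is to read the statement off from the proof of Theorem \ref{Integrality}, whose core computation already identifies the pairing of $[\phi]$ with equivariant $K$-theory as an equivariant index pairing, and then to upgrade that identification so that it lives over the $C^*$-algebra rather than over the smooth subalgebra $\maA=C_c^\infty(\maG)$. In other words, the $R(H)$-valuedness is essentially contained in Theorem \ref{Integrality}, and the genuinely new content of the corollary is the extension of the pairing from $K^H(C_c^\infty(\maG))$ to $K^H(C^*(V,F))$.

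First I would recall the key fact established inside the proof of Theorem \ref{Integrality}: after the $H$-equivariant deformation on $\widehat\maH=\maH\oplus\maH$ producing the invertible operator ${\widehat{\Dir}}^{\what{E}}$, the residue cocycle $\phi$ is cohomologous, in the entire equivariant $(b_H,B_H)$-bicomplex, to the cocycle $\tau$ representing the Connes-Chern character of the equivariant even Fredholm module $(\widehat\maH,\what F)$ with $\what F=\sign({\widehat{\Dir}}^{\what{E}})$. This already shows that on $K^H(C_c^\infty(\maG))$ the pairing with $[\phi]$ equals the pairing with $[\tau]$, and that on an $H$-invariant idempotent $e$ this value is the character of the virtual representation $[\Ker]-[\Coker]\in R(H)$ of the $H$-invariant Fredholm operator $e(\what F\otimes\id_X)e$. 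Thus $R(H)$-valuedness over the smooth algebra is in hand.

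The substantive step is the extension to $K^H(C^*(V,F))$, for which the plan is to promote $(\widehat\maH,\what F)$ to a genuine equivariant $K$-homology class over the $C^*$-algebra. The average representation $\what\pi$ is bounded for the $C^*$-norm, so it extends to $C^*(V,F)$, and the implementing unitaries ${\widehat U}(h)$ commute with $\what F$; what must be checked is that $[\what F,\what\pi(a)]$ remains compact for every $a\in C^*(V,F)$. Here I would use that $a\mapsto[\what F,\what\pi(a)]$ is norm continuous (as $\what F$ is bounded) and that, by the Leibniz rule $[\what F,\what\pi(ab)]=[\what F,\what\pi(a)]\what\pi(b)+\what\pi(a)[\what F,\what\pi(b)]$, the elements with compact commutator form a norm-closed subalgebra; since it contains the dense $\maA$, on which compactness is part of the spectral-triple data, it must be all of $C^*(V,F)$. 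This yields a class $[(\widehat\maH,\what F)]\in KK^H(C^*(V,F),\C)$, and the associated equivariant index pairing $K^H(C^*(V,F))\otimes KK^H(C^*(V,F),\C)\to R(H)$ is $R(H)$-valued by construction. Its compatibility with the smooth pairing follows from the isomorphism $K^H(C_c^\infty(\maG))\simeq K^H(C^*(V,F))$ of \cite{BH04,BH10}, and evaluating the resulting character at a topological generator $h$ gives $\langle[\phi],x\rangle(h)\in R(H)(h)$.

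The hard part will be precisely this passage to the completion: the residue cocycle $\phi$ is an intrinsically local (smooth) object carrying no a priori continuity for the $C^*$-norm, so it cannot be extended directly; the mechanism that saves the day is the rewriting through the bounded symmetry $\what F$, which trades the unbounded local formula for a bona fide $C^*$-level Fredholm module and thereby simultaneously delivers the extension of the pairing and its integrality. A secondary point to treat with care is that the Fredholm module picture requires the invertibility deformation already carried out in Theorem \ref{Integrality}, so I would invoke that construction rather than repeat it, and likewise appeal to Remark \ref{StrongEquivariance} to guarantee that all the relevant cochains are genuinely $H$-equivariant.
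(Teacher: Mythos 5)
Your proposal is correct in substance, but it takes a genuinely different route from the paper. The paper never passes to the bounded picture over the completion: instead it invokes Connes' method of domains of derivations \cite{C86} to construct an intermediate $H$-subalgebra $\maB$ with $\maA=C_c^\infty(\maG)\subset\maB\subset C^*(V,F)$ which is stable under holomorphic functional calculus and to which the residue cochains $\phi_k^{CM}$ themselves extend, given by the same formulae; the strong equivariance of Remark \ref{StrongEquivariance} then yields the equivariant cocycle $\phi$ on $\maB$, the proof of Theorem \ref{Integrality} applies verbatim to $H$-invariant idempotents in $\maB\otimes\End(X)$, and the extension to $K^H(C^*(V,F))$ comes from $K^H(\maB)\simeq K^H(C^*(V,F))$ (holomorphic closure) together with the stability of $C^*(V,F)$ as an $H$-algebra \cite{HilsumSkandalis, BenameurPacific}, which permits representing every class by such an idempotent. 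Your route instead leaves the cocycle on the smooth algebra and extends only the bounded Fredholm module $(\widehat\maH,\what F)$: your norm-closure argument for compactness of $[\what F,\what\pi(a)]$ is standard and correct, and the resulting equivariant $KK$-pairing is $R(H)$-valued by construction. What your approach buys is that it bypasses the construction of $\maB$ and the idempotent-approximation/stability discussion entirely; what the paper's approach buys is an actual extension of the residue cocycle formula to a holomorphically closed algebra, so the pairing over $C^*(V,F)$ is still computed by the local (residue) formulae, which is what the subsequent applications use.

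One repair is needed in your write-up: the isomorphism $K^H(C_c^\infty(\maG))\simeq K^H(C^*(V,F))$ that you attribute to \cite{BH04, BH10} is not established there and should not be expected, since $C_c^\infty(\maG)$ is not closed under holomorphic functional calculus --- this is precisely why the paper manufactures $\maB$. Fortunately your argument does not need it: compatibility of the extended pairing with the smooth one only requires agreement along the natural map $K^H(C_c^\infty(\maG))\to K^H(C^*(V,F))$, and this follows from the index formula, since on a smooth $H$-invariant idempotent both pairings compute the equivariant index of $e(\what F\otimes\id_X)e$, using that $[\phi]=[\tau]$ in the equivariant $(b_H,B_H)$-bicomplex over $\maA$ as established in the proof of Theorem \ref{Integrality}. (A minor further imprecision: compactness of $[\what F,\what\pi(a)]$ for $a\in\maA$ is not literally ``part of the spectral-triple data'' but a consequence of finite summability, as used implicitly by the paper when it forms the Fredholm module $(\widehat\maH,\what F)$.) With that substitution your proof is complete.
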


\begin{proof}\ 
A general method of domains of derivations of commutators with the operator \cite{C86} allows one to define an $H$ subalgebra $\maB$ of $C^*(V, F)$ such that:
\begin{itemize}
\item $\maB$ contains $\maA=C_c^\infty (\maG)$;
\item $\maB$ is closed under the holomorphic functional calculus;
\item the Connes-Moscovici residue cochains $\phi^{CM}_k$ all extend to $\maB$ and define a $(b+B)$-closed cocycle on $\maB$, given by the same formulae.  
\end{itemize}
Therefore, the cochains $\phi^{CM}_k$ all satisfy again the strong equivariance property of Remark \ref{StrongEquivariance} and allow us to define the equivariant $(b_H+B_H)$-cocycle $\phi$ on $\maB$ as above. The proof of Theorem \ref{Integrality} now shows that for any $H$-invariant idempotent $e$ in $\maB\otimes \End (X)$ rather than in $\maA\otimes \End (X)$, the pairing of $\phi$ with the class of $e$ in $K^H(\maB)$ belongs again to $R(H)$ since it is given by an $H$-equivariant Fredholm index pairing. Since $\maB$ is closed under the holomorphic functional calculus and is an $H$-subalgebra, we know that the inclusion of $\maB$ induces an isomorphism $K^H(\maB)\simeq K^H(C^*(V, F))$ and every idempotent $p\in C^*(V, F)\otimes \End (X)$ can be approximated by an idempotent $e\in \maB\otimes \End (X)$ which defines the same class in $K^H(C^*(V, F))$. 
On the other hand, recall that the Connes $C^*$-algebra $C^*(V, F)$ is a stable $H$-algebra \cite{HilsumSkandalis, BenameurPacific}, and hence its (equivariant) $K$-theory can be represented by idempotents in $C^*(V, F)\otimes \End (X)$ for finite dimensional representations of $H$ in $X$. This shows that  the pairing of $[\phi]$ with $K^H(\maB)$ is completely determined by the pairings with $H$-invariant idempotents in $\maB\otimes \End (X)$. The proof is thus complete. 
\end{proof}

\subsection{Integrality in higher Lefschetz formulae} Before applying these results to the Lefschetz fixed point formula, we mention the following important theorem proven in \cite{BH18}, see also \cite{BH19}.  Recall the HKR chain map constructed in \cite{BH04} between the de Rham complex of holonomy invariant Haefliger currents and the cyclic complex of the algebra $C_c^\infty(\maG)$. It induces:
$$
\chi: H_{ev/odd}(V/F) \longrightarrow H_\lambda^{ev/odd} (C_c^\infty (\maG)).
$$

We are now in position to combine the previous results with the $K$-theory Lefschetz theorem of \cite{B97}. Recall that $H$ is topologically generated by $h$ and that it acts by isometries of $(V, g)$ which are holonomy diffeomorphisms, and moreover, the fixed point submanifold $V^h=V^H$ is assumed to be transverse to the foliation $F$, so it inherits a foliation $F^h$. Again when $H$ is connected, all these conditions are automatically satisfied when $H$ preserves the leaves. All the previous data can then be restricted to the $H$-trivial foliation $(V^h, F^h)$, and we have an $H$-equivariant spectral triple $(\maA^h, \maH^h, \Dir^{\what{E^h}})$ constructed in exactly the same way, or by restricting $(\maA, \maH, \Dir^{\what{E}})$. More precisely,
\begin{itemize}
\item $\maA^h$ denotes the involutive convolution algebra associated with the holonomy groupoid of the foliation $(V^h, F^h)$. 
\item ${\what{E^h}}={\what{E}}\vert_{V^h}$ and $\maH^h$ is the Hilbert space of $L^2$ sections over $V^h$ of the basic bundle $\maS^h\otimes  {\what{E^h}}$ with $\maS^h$ being the spin bundle of the transverse bundle to the foliation $F^h$ or the restriction of the ambiant spin bundle $\maS$ to $V^h$.
\item The operator $\Dir^{\what{E^h}}$ is the transverse Dirac operator on $(V^h, F^h)$ with coefficients in ${\what{E^h}}$. Its principal symbol is the restriction of the principal symbol of ${\what{E}}$ to $TV^h$.
\end{itemize}

The classical integrality theorems involving rational characteristic classes and proven using the Atiyah-Segal-Singer Lefschetz fixed point formulae \cite{ABII, AS2, AS3, HirzebruchZagier}, can in principle be extended to smooth foliations of closed manifolds. The following  theorem is only a first important step towards proving such integrality results, for two reasons. The first is that it is stated for Riemannian foliations where we could go further and deduce the integrality of the resulting rational combination of characteristic classes, in the spirit of the classical formulae. The second reason is that while this result can be stated for more general foliations using the Connes-Moscovici construction \cite{CM95}, it is at present rather hard to deduce the  integrality of corresponding characteristic numbers. These will certainly involve more exotic characteristic numbers and will be dealt with elsewhere. 

\begin{theorem}\label{LefInt}\
Denote by $\phi_{V^h, F^h}^{CM}$ the even Connes-Moscovici residue cocycle in the $(b, B)$-bicomplex  associated with the fixed point  foliation $(V^h, F^h)$ \cite{CM95}. Denote by $\Ind^{CS}_{V^h, F^h}: K(TF^h) \rightarrow K (C^*(V^h, F^h))$ the Connes-Skandalis topological longitudinal index morphism for the foliation $(V^h, F^h)$ \cite{CS84}. Then for any  leafwise elliptic  $H$-invariant pseudodifferential complex $(E,d)$ over the ambiant foliation $(V, F)$, we have 
$$
\left\langle (\Ind^{CS}_{V^h, F^h}\otimes \C)\left(\frac{i^*[\sigma (E,d)] (h)}{\lambda_{-1} (N^h\otimes \C) (h)}\right) \, , \, [\phi_{V^h, F^h}^{CM}]\otimes \id_\C\right\rangle\; \in \; R(H)(h). 
$$
\end{theorem}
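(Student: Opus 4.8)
The plan is to realize the stated quantity as the value at $h$ of an equivariant residue pairing over the ambient foliation $(V,F)$, which is already known to be integral by Corollary \ref{EqP}, and to use the $K$-theory Lefschetz theorem to transport that pairing to the fixed-point foliation $(V^h,F^h)$.

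First I would set
$$
Y := (\Ind^{CS}_{V^h,F^h}\otimes\id_{R(H)_h})\left(\frac{i^*[\sigma(E,d)]}{\lambda_{-1}(N^h\otimes\C)}\right)\in K^H(C^*(V^h,F^h))_h,
$$
and observe that the expression to be controlled is precisely the pairing of its value $\mathrm{ev}_h(Y)$ at $h$ against $[\phi^{CM}_{V^h,F^h}]\otimes\id_\C$, where $\mathrm{ev}_h$ denotes the evaluation $\chi\mapsto\chi(h)$ on the $R(H)_h$-coefficients. By Theorem \ref{K-Lef}, read through the quasi-trivial Morita extension attached to the transverse submanifold $V^h$, the class $Y$ is not an arbitrary element of the localized module: it coincides with the localization at $h$ of the genuine equivariant index class $\Ind^H(E,d)\in K^H(C^*(V,F))$. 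This is the crucial input, since the factor $\lambda_{-1}(N^h\otimes\C)^{-1}$ only makes sense after localizing, whereas $\Ind^H(E,d)$ carries no denominators.

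Next I would compare residue cocycles. The transverse Dirac-type operator $\Dir^{\what{E}}$ underlying the spectral triple of $(V,F)$ restricts to the transverse Dirac operator $\Dir^{\what{E^h}}$ of the fixed-point foliation, as recorded in the construction of $(\maA^h,\maH^h,\Dir^{\what{E^h}})$, so the equivariant residue cocycle $[\phi]$ of $(V,F)$ and the cocycle $[\phi_{V^h,F^h}]$ of $(V^h,F^h)$ are compatible under restriction to $V^h$. Using the identity $\phi_k(a_0,\ldots,a_k|h)=\phi^{CM}_k(\psi(h)a_0,a_1,\ldots,a_k)$ from the proof of Theorem \ref{EquivCocycle} and the explicit pairing formula of Theorem \ref{Integrality}, I would establish the localization identity
$$
\langle[\phi],\Ind^H(E,d)\rangle(h)=\langle\mathrm{ev}_h(Y),[\phi^{CM}_{V^h,F^h}]\otimes\id_\C\rangle,
$$
the right-hand side being exactly the quantity in the statement. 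The mechanism is the one that already turns the abstract pairing $\langle\ch^H(\Ind^H(E,d)),[C]\rangle(h)$ into the concrete fixed-point formula of Theorem \ref{basic}: evaluation at $h$ concentrates the pairing on $V^h$, the Atiyah-Segal invertibility along the normal directions produces the factor $\lambda_{-1}(N^h\otimes\C)(h)^{-1}$, and the multiplier $\psi(h)$ records the residual fibrewise action of $h$ along the pointwise-fixed transversal $V^h$.

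Finally I would invoke integrality. Because $\Ind^H(E,d)$ is a genuine, non-localized class in $K^H(C^*(V,F))$, Corollary \ref{EqP} gives $\langle[\phi],\Ind^H(E,d)\rangle\in R(H)$, so its value at $h$ lies in $R(H)(h)$; combined with the localization identity this is the assertion. The main obstacle is the middle step: proving rigorously that the equivariant residue pairing on $(V,F)$, once localized at $h$ and carried through the Morita extension, is computed by the residue cocycle $\phi^{CM}_{V^h,F^h}$ intrinsic to the fixed-point foliation. This is exactly where the locality of the Connes-Moscovici residues and the compatibility of the two transverse Dirac operators under restriction to $V^h$ have to be used with care; the remainder is formal manipulation of the $R(H)$-module structure and of the evaluation homomorphism, which factors through $R(H)\to R(H)_h$.
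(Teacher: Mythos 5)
Your proposal is correct and follows essentially the same route as the paper: the $K$-theory Lefschetz theorem of \cite{B97} identifies the fixed-point expression with the localization at $h$ of the global index class $\Ind^H(E,d)$, the Morita compatibility $\langle \maM(y), [\phi^{CM}]\rangle = \langle y, [\phi_{V^h, F^h}^{CM}]\rangle$ supplies your ``localization identity'', and the equivariant integrality of Theorem \ref{Integrality} and Corollary \ref{EqP} gives membership in $R(H)$ before evaluation at $h$. The middle step you flag as the main obstacle is exactly the point the paper isolates as the key lemma, proving it from the explicit formulas for the cochains $\phi_k$ and the Morita map $\maM$ by adapting the argument given in \cite{BH10} for holonomy invariant currents.
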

 
Some explanations are needed here. Note that the class $\frac{i^*[\sigma (E,d)] }{\lambda_{-1} (N^h\otimes \C)}$ belongs to the localized $R(H)_h$-module $K^H(TF^h)_h\simeq K(TF^h)\otimes R(H)_h$ since $H$ acts trivially on $TF^h$, hence evaluating at $h$ we get an element of $K(TF^h)\otimes \C$, and so 
$$
(\Ind^{CS}_{V^h, F^h}\otimes \C)\left(\frac{i^*[\sigma (E,d)] (h)}{\lambda_{-1} (N^h\otimes \C) (h)}\right) \; \in \; K(C^*(V^h, F^h))\otimes \C.
$$

\begin{proof}
By the $K$-theory Lefschetz fixed point theorem \cite{B97}, the Lefschetz class $L(h; E,d)$ of $h$ with respect to $(E, d)$ coincides with the image of the class 
$$
(\Ind^{CS}_{V^h, F^h}\otimes \id_{R(H)_h})\left(\frac{i^*[\sigma (E,d)] }{\lambda_{-1} (N^h\otimes \C) }\right)
$$
under  $\maM\otimes \id_{R(H)_h}: K(C^*(V^h, F^h))\otimes R(H)_h\rightarrow K(C^*(V, F))\otimes R(H)_h\simeq K^H(C^*(V, F))_h$. Here $\maM$ denotes the Morita extension morphism associated with the transverse submanifold $V^h$ to the foliation $(V, F)$, see \cite{B97}. The important feature of the Connes-Moscovici residue cocycle in this case of a Riemannian foliation, is that it ``commutes with Morita extension''. More precisely, we have 
$$
\langle \maM(y), [\phi^{CM}]\rangle  = \langle y, [\phi_{V^h, F^h}^{CM}]\rangle, \quad \forall y\in K(C^*(V^h, F^h)).
$$
This can be proven directly from the explicit formula for the cochains $\phi_k$ and for the map $\maM$, by adapting the proof given in \cite{BH10} for holonomy invariant currents. Therefore, we deduce from the $K$-theory Lefschetz theorem that
$$
\left\langle (\Ind^{CS}_{V^h, F^h}\otimes R(H)_h)\left(\frac{i^*[\sigma (E,d)] }{\lambda_{-1} (N^h\otimes \C)}\right) \, , \, [\phi_{V^h, F^h}^{CM}]\otimes R(H)_h\right\rangle = \langle L(h; E,d), [\phi^{CM}]\otimes R(H)_h\rangle. 
$$
But 
$$
\langle L(h; E,d), [\phi^{CM}]\otimes R(H)_h\rangle = \langle \Ind^H([\sigma(E,d)]), [\phi^{CM}]\otimes R(H)\rangle.  
$$
So
$$
\langle L(h; E,d), [\phi^{CM}]\otimes R(H)_h\rangle  (h) = \langle \Ind^H([\sigma(E,d)]), [\phi^{CM}]\otimes R(H)\rangle (h)\; \in \; R(H)(h). 
$$
Hence we end up with,
$$
\left\langle (\Ind^{CS}_{V^h, F^h}\otimes \C)\left(\frac{i^*[\sigma (E,d)] (h)}{\lambda_{-1} (N^h\otimes \C) (h)}\right) \, , \, [\phi_{V^h, F^h}^{CM}]\otimes \id_\C\right\rangle 
= 
$$
$$\left\langle (\Ind^{CS}_{V^h, F^h}\otimes R(H)_h)\left(\frac{i^*[\sigma (E,d)] }{\lambda_{-1} (N^h\otimes \C) }\right) \, , \, [\phi_{V^h, F^h}^{CM}]\otimes R(H)_h\right\rangle \; (h)  \; \in \; R(H)(h).
$$
\end{proof}

In this proof, we used  the explicit formula for  the Connes-Moscovici residue cocycle in this simpler case of Riemannian foliations to easily deduce the natural commutation  with the Morita extension map $\maM$. Applying the results of \cite{BH18, BH19}, that is the techniques of the Getzler rescaling argument on foliations as developped there, this commutation property can also be deduced from the Morita  compatibility property proven in \cite{BH10} for cyclic cocycles associated with closed holonomy invariant currents. More precisely, denote by $[C]$ the Haefliger homology class associated with the basic cohomology class $\what{A} (\nu) \ch ({\what{E}})$ and by $[C\vert_{V^h}]$ its restriction to $(V^h, F^h)$ or the Haefliger class in $H_{ev}(V^h/F^h)$ associated with the basic cohomology class $\what{A} (\nu^h) \ch ({\what{E}} \,|\,_{V^h})$. Then the Connes-Moscovici cyclic cohomology class $[\phi_{V^h, F^h}^{CM}]$ belongs to the range under the HKR map $H_*(V^h/F^h)\to H^*_\lambda (\maA^h)$ for the foliation  $(V^h, F^h)$. Indeed,  it is exactly the image of the Haefliger homology class associated with the basic cohomology class $\what{A} (\nu^h) \ch ({\what{E}} \,|\,_{V^h})$.   Thus we have the following topological description of the Connes-Moscovici residue cocycle which was proven in \cite{BH18, BH19}.

\begin{theorem}\label{ECCc}\cite{BH18, BH19}\
For the Riemannian foliation $(V, F)$, the class $[\phi^{CM}]\in HC^{ev} (C_c^\infty (\maG))$ of the Connes-Moscovici residue cocycle associated with the spectral triple $(\maA, \maH, \Dir^{\what{E}})$ belongs to the range of the HKR map $\chi: H_*(V/F) \to H_\lambda^*(C_c^\infty (\maG))$. It coincides with the image under $\chi$ of the Haefliger homology class associated with the basic cohomology class $\what{A} (\nu) \ch ({\what{E}})$. 
\end{theorem}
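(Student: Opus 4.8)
The plan is to represent each component $\phi_k^{CM}$ of the Connes--Moscovici residue cocycle by an explicit local expression coming from the short-time asymptotics of a heat supertrace, and then to extract its leading term by a Getzler-type rescaling adapted to the transverse Clifford structure of the Riemannian foliation. First I would pass from zeta residues to heat kernels: by a Mellin transform the residue at $z=0$ of $\Tr\bigl(\gamma\,\pi_{\what E}(a_0)[\Dir^{\what E},\pi_{\what E}(a_1)]\cdots[\Dir^{\what E},\pi_{\what E}(a_k)](\id+(\Dir^{\what E})^2)^{-k/2-z}\bigr)$ is encoded in the coefficient of the appropriate power of $t$ in the expansion of the heat supertrace $\Tr\bigl(\gamma\,\pi_{\what E}(a_0)[\Dir^{\what E},\pi_{\what E}(a_1)]\cdots[\Dir^{\what E},\pi_{\what E}(a_k)]\,e^{-t(\Dir^{\what E})^2}\bigr)$ as $t\to 0$. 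Because the $\pi_{\what E}(a_j)$ are averaging (convolution) operators over $\maG$, this supertrace is the integral over the diagonal of a Schwartz kernel, which splits into a leafwise integration along the holonomy covers $\maG_x$ and a transverse integration that manufactures the Haefliger current.

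Next I would localize and rescale. Since the metric is bundle-like and $(\Dir^{\what E})^2$ has scalar transverse principal symbol, the heat kernel concentrates transversally near the diagonal as $t\to 0$. I would then apply Getzler rescaling to the transverse Clifford generators $c(f_i)$ and to the normal coordinates: with the usual filtration weights and $t=\epsilon^2$, the rescaled operator converges to a transverse harmonic-oscillator operator whose Mehler heat kernel yields the $\what A$-form $\det^{1/2}\bigl(\tfrac{R^\nu/4\pi}{\sinh(R^\nu/4\pi)}\bigr)$, while the fiberwise trace over the coefficient bundle produces $\ch(\what E)$. The commutators $[\Dir^{\what E},\pi_{\what E}(a_j)]$ survive in the limit as transverse de Rham differentials $d_\nu a_j$, so the leading local expression is precisely the transverse form $\what A(\nu)\,\ch(\what E)$ evaluated against $a_0\,d_\nu a_1\cdots d_\nu a_k$.

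Then I would check that the two features peculiar to the foliated setting are harmless. The mean-curvature term $-\tfrac12\mu\cdot$ in $\Dir^{\what E}$ must drop out of the leading order; this holds because it carries a strictly lower rescaling weight, so it cannot contribute to the top Clifford component picked out by the supertrace. The leafwise directions, in which the operator is not elliptic, enter only through the integration over the holonomy cover, which is exactly the fiber integration $\int_F$ built into the Haefliger homology class. Comparing the resulting cochain with the definition of the HKR chain map $\chi$ of \cite{BH04} shows they agree: integrating $\what A(\nu)\ch(\what E)$ against $a_0\,d_\nu a_1\cdots d_\nu a_k$ is by construction the image under $\chi$ of the Haefliger homology class associated with the basic class $\what A(\nu)\ch(\what E)$. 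Passing to cohomology classes then gives $[\phi^{CM}]=\chi\bigl([\,\what A(\nu)\,\ch(\what E)\,]\bigr)$.

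The main obstacle is the Getzler rescaling in this transversally elliptic setting. Unlike on a closed manifold, the heat kernel of $(\Dir^{\what E})^2$ localizes only in the normal directions, so one must establish uniform short-time kernel estimates along the possibly non-compact leaves and justify interchanging the rescaling limit with both the leafwise and the transverse integrations. Controlling these estimates, together with the contributions of the mean curvature and of the holonomy present in the averaging representation, is the technical core of \cite{BH18, BH19}, and it is exactly where the Riemannian hypotheses (bundle-like metric, scalar transverse symbol, basic coefficient bundle) are indispensable.
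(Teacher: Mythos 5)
Your outline follows essentially the same route as the paper, which does not prove Theorem \ref{ECCc} internally but attributes it to \cite{BH18, BH19} as ``a long and tedious rescaling argument \`a la Getzler for transversely elliptic operators'': the Mellin passage from zeta residues to heat-supertrace asymptotics, Getzler rescaling of the transverse Clifford variables yielding the Mehler kernel, hence $\what{A}(\nu)\ch(\what{E})$, with the commutators $[\Dir^{\what{E}},a_j]$ surviving as transverse differentials, and finally identification with the HKR cocycle $\tau_{C_\alpha}$ of \cite{BH04}. You also correctly locate the technical core (uniform transverse localization along possibly non-compact leaves, the mean-curvature term, and the holonomy in the averaging representation), which is precisely what is deferred to \cite{BH18, BH19}.
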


The proof relies on a long and tedious rescaling argument \`a la Getzler for transversely elliptic operators, see  \cite{BH18, BH19}. As a corollary, we get the following expected topological integrality result which opens up the way to more involved results when using the Connes-Moscovici residue cocycle for triangular structures and hence for general non-Riemannian foliations. Recall that $H$ is topologically generated by $h$ and that the fixed point submanifold $V^h=V^H$ is transverse to the foliation. 

\begin{corollary} \label{integrality}
The  characteristic number 
$$
\int_{V^h} \dfrac{\ch_\C (i^*[\sigma (E,d)](h))}{\ch_\C (\lambda_{-1}(N^h\otimes \C)(h))} \Td (TF^h\otimes \C) \what{A} (\nu^h) \ch ({\what{E}} \,|\,_{V^h}) \text{ belongs to $R(H)(h)$}.
$$
\end{corollary}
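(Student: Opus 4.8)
The plan is to read off the statement by combining the integrality already established in Theorem~\ref{LefInt} with the topological identification of the Connes--Moscovici residue cocycle given in Theorem~\ref{ECCc}, and then to make the resulting pairing explicit via the cohomological Lefschetz formula of Theorem~\ref{basic2}; the only new content is the translation of an abstract index pairing into the stated characteristic integral over $V^h$. First I would invoke Theorem~\ref{LefInt}, which already guarantees that the pairing
$$
\left\langle (\Ind^{CS}_{V^h, F^h}\otimes \C)\left(\frac{i^*[\sigma (E,d)] (h)}{\lambda_{-1} (N^h\otimes \C) (h)}\right) \, , \, [\phi_{V^h, F^h}^{CM}]\otimes \id_\C\right\rangle
$$
belongs to $R(H)(h)$. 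It therefore suffices to identify this number with the characteristic integral in the statement.

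Next I would apply Theorem~\ref{ECCc}, but to the fixed point foliation $(V^h, F^h)$ in place of $(V,F)$. This identifies $[\phi^{CM}_{V^h, F^h}]$ in the cyclic cohomology of $\maA^h$ with the image under the HKR map $\chi$ of the Haefliger homology class $[C|_{V^h}]$ associated with the closed basic form $\alpha := \what{A}(\nu^h)\,\ch(\what{E}|_{V^h})$; this is exactly the statement recorded in the paragraph preceding the corollary. Here one uses that $\nu^h$ and $\what{E}|_{V^h}$ inherit basic connections by restriction from $V$, so that $\alpha$ is a genuine closed basic form and $C_\alpha = C|_{V^h}$ is a legitimate closed holonomy invariant current on $(V^h, F^h)$. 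Since the cyclic cocycle $\tau_{C|_{V^h}}$ attached to this current is by construction the HKR image $\chi([C|_{V^h}])$, the equality $[\phi^{CM}_{V^h,F^h}]=\chi([C|_{V^h}])$ turns the pairing above into the higher Lefschetz number $L_{C|_{V^h}}(h;E,d)$ for the current $C|_{V^h}$ (recall from Theorem~\ref{basic} that $L_C(h;E,d)=\tau_{C|_{V^h}}\circ(\Ind_{V^h}\otimes\id_\C)$ applied to the same symbol data).

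Finally I would evaluate $L_{C|_{V^h}}(h;E,d)$ by the cohomological Lefschetz formula of Theorem~\ref{basic2}. Because $C|_{V^h}$ is induced by the closed basic form $\alpha$, pairing the Haefliger cohomology class $\int_{F^h}(\cdots)$ against $[C|_{V^h}]$ reduces to integration over $V^h$ of the wedge product with $\alpha$, exactly as in the displayed formula for $L_{C_\alpha}$; this yields
$$
L_{C|_{V^h}}(h;E,d) = \int_{V^h}\frac{\ch_\C(i^*[\sigma(E,d)](h))}{\ch_\C(\lambda_{-1}(N^h\otimes\C)(h))}\,\Td(TF^h\otimes\C)\,\what{A}(\nu^h)\,\ch(\what{E}|_{V^h}),
$$
which is precisely the number appearing in the corollary. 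Chaining the three steps shows that it lies in $R(H)(h)$.

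The main obstacle is the second step: the identification of the Connes--Moscovici residue cocycle with the expected characteristic-class cocycle $\what{A}(\nu^h)\ch(\what{E}|_{V^h})$, i.e. Theorem~\ref{ECCc} for the fixed point foliation. This rests on the long Getzler rescaling computation of \cite{BH18, BH19} for transversely elliptic operators, and it is the only genuinely analytic input. Once it is granted, the remaining steps are the routine matching of the two sides of the pairing and the elementary observation that a current coming from a basic form integrates against Haefliger cohomology simply by wedging and integrating over $V^h$.
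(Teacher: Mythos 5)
Your proof is correct, but it factors the argument differently from the paper. The paper's own proof never invokes Theorem~\ref{LefInt}: it stays on the ambient foliation $(V,F)$, first using the higher Lefschetz formula of \cite{BH10} (the displayed $L_{C_\alpha}$ formula following Theorem~\ref{basic2}) to equate the characteristic number with the equivariant pairing $\langle[\tau_{C_\alpha}],\Ind^H(E,d)\rangle(h)$ for the ambient basic class $\alpha=\what{A}(R^\nu)\ch(\Ome^{\what{E}})$, then applying Theorem~\ref{ECCc} on $(V,F)$ to identify $[\tau_{C_\alpha}]$ with the equivariant Connes--Chern character of $(\maA,\maH,\Dir^{\what{E}})$, and finally quoting Corollary~\ref{EqP} to conclude that this pairing lies in $R(H)$ before evaluating at $h$. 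You instead localize first: you take the integrality of the fixed-point pairing from Theorem~\ref{LefInt} as given, apply Theorem~\ref{ECCc} to the fixed-point foliation $(V^h,F^h)$ to replace $[\phi^{CM}_{V^h,F^h}]$ by the HKR image of the current of $\what{A}(\nu^h)\ch(\what{E}|_{V^h})$, and then evaluate the resulting $\Ind_{C|_{V^h}}$ pairing cohomologically via Theorems~\ref{basic} and~\ref{basic2}. Both routes rest on the same analytic input (the Getzler-rescaling identification of the residue cocycle), and your observation that $\nu^h\simeq\nu|_{V^h}$ so that $\what{A}(\nu^h)\ch(\what{E}|_{V^h})$ is the restriction of an ambient basic form correctly legitimizes using the restricted current. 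What each approach buys: your route reuses the Morita-compatibility work already packaged inside the proof of Theorem~\ref{LefInt} and keeps all objects on $(V^h,F^h)$, which is economical once Theorem~\ref{LefInt} is in hand; the paper's route applies Theorem~\ref{ECCc} only to the ambient triple and exhibits the characteristic number directly as the value at $h$ of a single element of $R(H)$, namely $\langle[\tau_{C_\alpha}],\Ind^H(E,d)\rangle$, which makes the integrality mechanism (Corollary~\ref{EqP}) more transparent. One point to keep in mind, at the paper's own level of rigor rather than as a gap: replacing $[\phi^{CM}_{V^h,F^h}]$ by the cohomologous cocycle $\tau_{C|_{V^h}}$ inside a pairing with $K(C^*(V^h,F^h))\otimes\C$ uses that the index class factors through the smooth algebra and that both cocycles extend, which the paper secures via Corollary~\ref{EqP} and the extension results of \cite{BH10}; it is worth saying this explicitly in your second step.
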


\begin{proof}
By the higher Lefschetz formula of \cite{BH10}, we have
$$
\int_{V^h} \dfrac{\ch_\C (i^*[\sigma (E,d)](h))}{\ch_\C (\lambda_{-1}(N^h\otimes \C)(h))} \Td (TF^h\otimes \C) \what{A} (\nu^h) \ch ({\what{E}} \,|\,_{V^h}) = \left\langle\left[\tau_{C_{\alpha}}\right] , \Ind^H(E,d) \right\rangle (h),
$$
 with $\alpha$ being the basic class $ \what{A} (R^\nu) \ch (\Ome^{\what{E}})$, $C_\alpha$ its corresponding Haefliger homology class, and $\left[\tau_{C_{\alpha}}\right]$ the associated $H$-equivariant cyclic cohomology class over $C_c^\infty (\maG)$ as defined in \cite{BH10}. By Theorem \ref{ECCc} the cyclic cohomology class of $\tau_{C_{\alpha}}$ coincides with the equivariant Connes-Chern character of $(\maA, \maH, \Dir^{\what{E}})$.
By Proposition \ref{EqP},  the pairing of the equivariant Connes-Chern character of $(\maA, \maH, \Dir^{\what{E}})$ with the equivariant $K$-theory takes values in $R(H)$. Hence, we deduce 
$$
\left\langle\left[\tau_{C_{\alpha}}\right] , \Ind^H(E,d)\right\rangle \; \in \; R(H),
$$
and so the conclusion.
\end{proof}

As explained above,  Theorem \ref{LefInt} is expected to still hold   for non-Riemannian foliations but using the more involved expression of the residue cocycle given in \cite{CM95}. However,  the characteristic expression of Corollary \ref{integrality} is specific to Riemannian foliations. As also explained above, one expects  similar integrality of characteristic numbers for non-Riemannian foliations, but  involving more exotic characteristic classes associated with actions of the Connes-Moscovici  Hopf algebra of vector fields.

\ms

We end this section by giving the heuristic explanation of the integrality result obatined in Corollary \ref{integrality} in the case where   $V^h$ is a strict transversal to $F$, say with dimension equal to $\codim (F)$. The characteristic number of Corollary \ref{integrality} then reduces to
$$
\int_{V^h} \dfrac{\ch_\C (i^*[\sigma (E,d)](h))}{\ch_\C (\lambda_{-1}(TF \, | \, _{V^h}\otimes \C)(h))} \what{A} (TV^h) \ch ({\what{E}} \,|\,_{V^h}),
$$
since, in this case, $N^h = TF \, | \, _{V^h}$,  $\nu^h = TV^h$,   $\Td (T\maF^h\otimes \C) = 1$ and $\what{A} (\nu^h)= \what{A} (TV^h)$. 
The $H$-invariant leafwise elliptic complex $(E,d)$ together with the transverse Dirac operator $\Dir^{\what{E}}$  allow one to define an $H$-invariant sharp product elliptic complex $(E,d) \sharp\Dir_{V}^{\what{E}}$ over the ambiant manifold $V$, which can be achieved for instance at the level of symbols. Thanks to the Atiyah-Segal Lefschetz formula \cite{AS3}, the above characteristic number is then equal to the Atiyah-Segal Lefschetz number of $h$ with respect to $(E,d) \sharp\Dir_{V}^{\what{E}}$, and hence belongs to $R(H)(h)$.


\begin{thebibliography}{EHS85}
\bibitem[A69]{A69} M. F. Atiyah, 
{\em The signature of fibre-bundles},  Global Analysis (Papers in Honor of K. Kodaira) Univ. Tokyo Press, Tokyo 
1969. 

\bibitem[AB68]{ABII}M. F. Atiyah and R. Bott, {\em A Lefschetz Fixed Point Formula for Elliptic Complexes: II. Applications}, Ann. of Math. {\bf 88} (1968) 451--491.

\bibitem[AH70]{AH} M. F. Atiyah and F. Hirzebruch, 
{\em Spin manifolds and group actions}, 
Essays on topology and related topics, M\'emoires d\'edi\'es \`a Georges De Rham,  
Springer, New York, 1970.

\bibitem[ASe68]{AS2} M. F. Atiyah and G. B. Segal, 
{\em The index of elliptic operators: II}, Ann. of Math. (2) 87 (1968), 531--545.

\bibitem[AS68]{AS3} M. F. Atiyah and I. M. Singer, 
{\em The index of elliptic operators: III}, Ann. of Math. {\bf 87} (1968)  546--604.

\bibitem[B97a]{B97}   M-T.  Benameur, 
{\em A longitudinal Lefschetz theorem in K-theory},
K -Theory 12 (1997), no. 3, 227--257.

\bibitem[B97b]{BenameurPacific}   M-T.  Benameur, 
{\em Triangulations and the stability theorem for foliations},
Pacific J. Math. 179 (1997), no. 2, 221--239.

\bibitem[B02]{Bena02}   M-T.  Benameur, 
{\em Cyclic cohomology and the family Lefschetz theorem},
 Math. Ann. {\bf 323} (2002),  97--121
 
 
\bibitem[B03]{Bena03}   M-T.  Benameur, 
{\em A higher Lefschetz formula for flat bundles.} (English summary) Trans. Amer. Math. Soc.355(2003), no.1, 119--142.


 \bibitem[BF06]{BF06} M-T. Benameur and T. Fack, 
 {\em Type II noncommutative geometry. I. Dixmier trace in von Neuman algebras},
 Advances in Mathematics 199   (2006) 29--87.   
 
\bibitem[BH04]{BH04} M-T. Benameur and J. L. Heitsch, 
 {\em Index theory and Non-Commutative Geometry I. Higher Families Index Theory},
 K-Theory  {\bf 33} (2004) 151--183.   
{\em Corrigendum},   ibid  {\bf 36} (2005) 397--402.

\bibitem[BH10]{BH10} M-T. Benameur and J. L. Heitsch,  
{\em The Higher Fixed Point Theorem for Foliations I. Holonomy Invariant Currents}, 
J. of Funct. Analysis {\bf 259} (2010), 131-173.

\bibitem[BH11]{BH11} M-T. Benameur and J. L. Heitsch,  
{\em The twisted higher harmonic signature for foliations.} J. Differential Geom. 87 (3) 389--468, March 2011.


\bibitem[BH18]{BH18} M-T. Benameur and J. L. Heitsch,  
{\em Transverse noncommutative geometry of foliations.} J. Geom. Phys.134 (2018), 161--194.

\bibitem[BH19]{BH19} M-T. Benameur and J. L. Heitsch,  {\em Atiyah covering index theorem for Riemannian foliations},
Trans. Amer. Math. Soc. 371 (2019), no. 8, 5875–5897.

\bibitem[BH23]{BH23} M-T. Benameur and J. L. Heitsch,  
{\em Higher relative index theorems for foliations}, preprint ArXiv:2402.10316.

\bibitem[BoH58]{BoH58} A. Borel and F. Hirzebruch,
{\em Characteristic classes and homogeneous spaces, I}, 
Am. J. of Math. {\bf 80} (1958), 458--538.

\bibitem[BoH59]{BoH59} A. Borel and F. Hirzebruch,
{\em Characteristic classes and homogeneous spaces, II}, 
Am. J. of Math. {\bf 81} (1959), 315--362.

\bibitem[BT89]{BT} R. Bott and C. Taubes,
{\em On the rigidity thoerems of Witten}, 
J. Amer. Math. Soc.  {\bf 2} (1989), 137--186.

\bibitem[Ca84]{Carr} Y. Carri\`{e}re, {\em Flots Riemanniens},  Ast\'{e}risque {\bf 116} (1984), 31--52. 

\bibitem[CHS57]{CHS} S. S. Chern, F. Hirzebruch, and J-P Serre,{\em The index of a fibered manifold} ,  
Proc. Amer. Math. Soc. {\bf 8} (1957), 587--596. 

\bibitem[C79]{C79} A. Connes,
{\em Sur la th\'eorie non commutative de l'int\'egration}, Lecture Notes in Math., 725 Springer, Berlin, 1979, pp. 19--143. 

\bibitem[C82]{C82} A. Connes,
{\em A survey of foliations and operator algebras}, 
Proc. Symp. Pure Math. {\bf 38} AMS, Providence, 1982.

\bibitem[C85]{C85} A. Connes, {\em Noncommutative differential geometry.} Inst. Hautes Études Sci. Publ. Math.(1985), no. 62, 257--360.

\bibitem[C86]{C86} A. Connes, 
{\em Cyclic cohomology and the transverse fundamental class of a foliation}, 
Pitman Res. Notes Math. Ser. {\bf 123}, Longman Sci. Tech., Harlow, 1986.

\bibitem[C94]{C94} A. Connes, 
Noncommutative Geometry, 
Academic Press, New York - London, 1994.

\bibitem[CM95]{CM95} A. Connes and H. Moscovici,  
{\em The local index formula in noncommutative geometry},   
Geom. Funct. Anal. {\bf 5} (1995),  174--243.

\bibitem[CS84]{CS84} A. Connes and G. Skandalis, {\em The longitudinal index theorem for foliations.} Publ. Res. Inst. Math. Sci. 20 (1984), no. 6, 1139-1183. 

\bibitem[EHS85]{EKHS} A. El Kacimi-Alaoui, G. Hector, and V. Sergiescu,
{\em La cohomologie basique d'un feuilletage Riemannien est de dimension finie},
Math. Z. {\bf 188} (1985),  593--599.

\bibitem[CGRS12]{GayralSukochev2012} A. Carey, V. Gayral, A. Rennie and F. Sukochev, {\em Integration on locally compact noncommutative spaces}, J. Funct. Anal. 263 (2012), no. 2, 383--414.

\bibitem[G83]{Getzler} 
E. Getzler, {\em Pseudodifferential operators on supermanifolds and the Atiyah-Singer index theorem}
Comm. Math. Phys. 92 (1983), no. 2, 163--178.

\bibitem[Go99]{GorokhovskyThesis} 
A. Gorokhovsky, {\em Characters of cycles, equivariant characteristic classes and Fredholm modules}, Comm. Math. Phys. 208 (1999), no. 1, 1--23.

\bibitem[GlK91]{GlK91} J. F. Glazebrook  and F. W. Kamber, 
{\em Transversal Dirac families in Riemannian foliations},  Comm. Math. Phys. {\bf 140} (1991) 217--240.

\bibitem[H80]{Hae} A. Haefliger, 
{\em Some remarks on foliations with minimal leaves}, J. Diff. Geom. {\bf 15} (1980), 269--284.

\bibitem [H78]{H78} J. L. Heitsch,  {\em  Independent variation of secondary classes},  Ann. of Math. {\bf 108} (1978) 421--460. 

\bibitem[HL90]{HL90} J. L. Heitsch and C. Lazarov, 
{\em A Lefschetz fixed point theorem for foliated manifolds}, Topology {\bf 29} (1990), 127--162.

\bibitem[HL91]{HL91} J. L. Heitsch and C. Lazarov, 
{\em Rigidity theorems for foliations by surfaces and spin manifolds}, 
Michigan Math. J. {\bf 38} (1991), 285--297.

\bibitem[HS91]{HilsumSkandalis} M. Hilsum and G. Skandalis, {\em Stabilité des C*-alg\`ebres de feuilletages}, Ann. Inst. Fourier, Grenoble, 33, 3 (1983).

\bibitem[HZ74]{HirzebruchZagier} F. Hirzebruch and D. Zagier, 
The Atiyah-Singer theorem and elementary number theory. 
Math. Lecture Series, No. 3. Publish or Perish, Inc., Boston, Mass., 1974

\bibitem[KT83]{KT} F. Kamber and Ph. Tondeur,
{\em Duality for Riemannian foliations} 
Proc. Sympos. Pure Math., {\bf 40}, Amer. Math. Soc., Providence, RI, (1983), 609--618. 

\bibitem[Ko97]{Kordyukov} Y. A. Kordyukov,  
{\em Noncommutative spectral geometry of Riemannian foliations} 
Manuscripta Math. {\bf 94} (1997),  45--73.

\bibitem[Ko15]{Ko15} Y. A. Kordyukov and V.A Pavlenko,  
{\em On Lefschetz formulas for flows on foliated manifolds.} (English summary), Ufa Math. J.7(2015), no.2, 71--101.


\bibitem[Ko09]{Ko09} Y. A. Kordyukov,  
{\em Index theory and noncommutative geometry on manifolds with foliations.} (Russian. Russian summary) Uspekhi Mat. Nauk64(2009), no.2, 73--202; translation in
Russian Math. Surveys64(2009), no.2, 273--391.

\bibitem[LM89]{LM89} H. B. Lawson and M.-L. Michelsohn, {\em Spin geometry},
Princeton Math. Ser., 38
Princeton University Press, Princeton, NJ, 1989, xii+427 pp.H. B. Lawson and S. T. Yau, 

\bibitem[LY74]{LawsonYau} H. B. Lawson and S. T. Yau, 
{\em Scalar curvature, non-abelian group actions, and the degree of symmetry of exotic spheres}, 
Comment. Math. Helv. {\bf 49} (1974), 232--244.

\bibitem[LMZ01]{LMZ}  K. liu, X. Ma, and W. Zhang,  
{\em On elliptic genera and foliations} 
Math. Research Letters {\bf 8} (2001),  361--376.

\bibitem[LMSZ23]{LMSZ23}  
S. Lord, E. McDonald, F. Sukochev and D. Zanin, 
{\em Singular traces. Vol. 2. Trace formulas}
De Gruyter Stud. Math., 46/2
De Gruyter, Berlin, 2023, xl+471 pp.

\bibitem[SZ2301]{SZ23}  F. A. Sukochev and D. V. Zanin, {\em Connes integration formula: a constructive approach}, 
Funct. Anal. Appl. 57 (2023), no. 1, 40--59.



\end{thebibliography}
\end{document}